\def\eod{\vrule height 6pt width 5pt depth 0pt}
\newenvironment{proof}{\noindent {\bf Proof:} \hspace{.2em}}
                      {\hspace*{\fill}{\eod}}
\newtheorem{theorem}{Theorem}
\newtheorem{lemma}[theorem]{Lemma}
\newtheorem{definition}[theorem]{Definition}
\newtheorem{corollary}[theorem]{Corollary}
\newtheorem{example}[theorem]{Example}
\newtheorem{remark}[theorem]{Remark}
\newcommand{\ED}{ \mathsf{ED}}
\newcommand{\sL}{  \mathcal{ L}}
\newcommand{\sM}{  \mathcal{ M}}
\newcommand{\ssM}{  \mathsf{Moment}}
\newcommand{\sO}{  \mathcal{ O}}
\newcommand{\RR}{ \mathbbm{R}}
\newcommand{\ZZ}{ \mathbbm{Z}}
\newcommand{\revpath}{ \mathsf{reverse\_on\_path}}
\newcommand{\rhalf}{\lfloor r/2 \rfloor}
\newcommand{\nhalf}{\lfloor n/2 \rfloor}
\newcommand{\comment}[1]{}
\newcommand{\red}[1]{\textcolor{red}{#1}}
\newcommand{\rar}{ \rightarrow}
\newcommand{\SSS}{\mathfrak{S}}
\newcommand{\CC}{ \mathbb{C}}
\newcommand{\aw}{\mathsf{away}}
\newcommand{\GTS}{\mathsf{GTS}}
\newcommand{\awy}{\mathsf{Aw}}
\newcommand{\wt}{\mathsf{wt}}
\newcommand{\bd}{\mathsf{bidir}}
\newcommand{\fr}{\mathsf{free}}
\newcommand{\perm}{  \mathsf{perm}}
\newcommand{\vsp}{\vskip 1em}
\begin{document}

\setstcolor{red}

\title{Laplacian Immanantal polynomials and the $\GTS$ poset on Trees
}

\author{Mukesh Kumar Nagar\\
Department of Mathematics\\
Indian Institute of Technology, Bombay\\
Mumbai 400 076, India.\\
email: mukesh.kr.nagar@gmail.com
\and
Sivaramakrishnan Sivasubramanian\\
Department of Mathematics\\
Indian Institute of Technology, Bombay\\
Mumbai 400 076, India.\\
email: krishnan@math.iitb.ac.in
}

\maketitle

 \vskip 2em

{\it {\bf Abstract:} Let $T$ be a tree on $n$ vertices with Laplacian $L_T$ and
let $\GTS_n$ be the generalized tree shift poset on the set of unlabelled 
trees on $n$ vertices.  Inequalities are known for coefficients of the
characteristic polynomial of $L_T$ as we go up the poset $\GTS_n$.
In this work, we generalize these inequalities to the 
$q$-Laplacian $\sL^q_T$ of $T$ and to the coefficients of all immanantal
polynomials.
}

  \vsp

   \vsp

	 {\it Keywords:} Tree, $\GTS_n$ poset, $q$-Laplacian, immanantal polynomial 

	  \vsp

	   {\it AMS Subject Classification:} 05C05, 15A69, 06A06

\section{Introduction}
\label{sec:intro}

Csikv{\'a}ri in \cite{csikvari-poset1} defined a poset 
on the set of unlabelled trees with $n$ vertices that we
denote in this paper as $\GTS_n$.  Among other results, he showed that 
going up on $\GTS_n$ has the following effect:  the coefficients of the 
characteristic polynomial of the Laplacian $L_T$ of $T$ decrease in absolute value.   
In this paper, we prove the following more general result about
immanantal polynomials of the $q$-Laplacian matrix of trees 

\begin{theorem}
  \label{thm:main}
Let $T_1$ and $T_2$ be trees with $n$ vertices and let $T_2$ cover $T_1$ in $\GTS_n$.  Let
$\sL_{T_1}^q$ and $\sL_{T_2}^q$ be the $q$-Laplacians of $T_1$ and $T_2$ respectively.  
For $\lambda \vdash n$, let 

\begin{eqnarray*}
f^{\sL_{T_1}^q}_{\lambda}(x) & = & d_{\lambda}(xI -\sL_{T_1}^q) = \sum_{r=0}^n (-1)^r 
c_{\lambda,r}^{\sL_{T_1}^q}(q) x^{n-r} \mbox{ and} \\
f^{\sL_{T_2}^q}_{\lambda}(x) & = & d_{\lambda}(xI -\sL_{T_2}^q) = \sum_{r=0}^n (-1)^r 
c_{\lambda,r}^{\sL_{T_2}^q}(q) x^{n-r}.
\end{eqnarray*}

Then, for all $\lambda \vdash n$ and for all $0 \leq r \leq n$, we assert that 
$c_{\lambda,r}^{\sL_{T_1}^q}(q) - c_{\lambda,r}^{\sL_{T_2}^q}(q) 
\in \RR^+[q^2]$.   
\end{theorem}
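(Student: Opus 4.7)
The plan is to reduce Theorem \ref{thm:main} to a term-wise inequality on principal submatrix immanants, and then to exploit the local structure of the GTS cover move to produce a weight-preserving injection.

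First I would expand $d_\lambda(xI - \sL^q_T)$ by splitting each permutation $\sigma \in \SSS_n$'s contribution according to the subset $F$ of its fixed points at which the factor $x$ (rather than $-(\sL^q_T)_{ii}$) is taken. Setting $S = [n]\setminus F$, this yields
\[
c_{\lambda,r}^{\sL^q_T}(q) \;=\; \sum_{\substack{S \subseteq [n] \\ |S|=r}} \sum_{\tau \in \SSS_S} \chi_\lambda\bigl(\tau \cdot \id_{[n]\setminus S}\bigr) \prod_{i \in S} (\sL^q_T)_{i,\tau(i)}.
\]
By the branching rule for $\SSS_n \downarrow \SSS_r \times \SSS_{n-r}$, the coefficient $\chi_\lambda(\tau \cdot \id_{[n]\setminus S})$ is a non-negative integer combination of $\chi_\mu(\tau)$ for $\mu \vdash r$ (with coefficients $\sum_{\nu \vdash n-r} c^{\lambda}_{\mu\nu} f^\nu$, Littlewood-Richardson times dimension). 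Thus it suffices to prove, for each $\mu \vdash r$, that
\[
\sum_{|S|=r} \Bigl[ d_\mu\bigl(\sL^q_{T_1}[S,S]\bigr) - d_\mu\bigl(\sL^q_{T_2}[S,S]\bigr) \Bigr] \;\in\; \RR^+[q^2].
\]

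Second, I would invoke (or develop, building on the paper's earlier machinery for $q$-Laplacian immanants of trees) a combinatorial model for $d_\mu(\sL^q_T[S,S])$ as a weighted sum over decorated subforests of the induced subgraph $T[S]$: each used edge contributes a power of $q$, and the cycle type of the underlying permutation $\tau$ gets weighted by $\chi_\mu(\tau)$. Because off-diagonal entries of the $q$-Laplacian are supported on tree edges, and every closed walk on a tree traverses each used edge an even number of times, the edge factors combine into powers of $q^2$, which accounts for the $\RR[q^2]$ ambient ring in the statement. Third, I would analyze the GTS cover $T_1 \lessdot T_2$: per Csikv\'ari, this cover is realized by a local move around a distinguished edge in which a pendant subtree is detached from one endpoint and reattached to the other. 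Using the combinatorial model above, I would construct a weight-preserving injection from the pairs $(S, \text{configuration})$ for $T_2$ into those for $T_1$, essentially by ``un-shifting'' each configuration along the GTS move. The unmatched configurations on the $T_1$ side constitute the difference, and each should carry a weight in $\RR^+[q^2]$.

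The main obstacle will be certifying positivity of this residue for a general $\mu$. Csikv\'ari's original argument treats only $\chi_{(1^n)} = \sgn$, where sign cancellations make non-negativity transparent; for a generic $\chi_\mu$ the residue is a sum of $\chi_\mu(\tau)$-weighted configurations whose values can be of either sign. I expect to address this by grouping residual configurations into orbits stable under a natural involution coming from the GTS move, so that the net $\chi_\mu$-weight of each orbit is a manifestly non-negative polynomial in $q^2$. A plausible route is to reduce to Jacobi--Trudi-style expansions that express $\chi_\mu$ as a signed combination of sign-definite pieces (elementary or homogeneous symmetric functions evaluated on power sums), each of which corresponds to a tree immanant already shown to be monotone under GTS in earlier parts of the paper; the alternating signs in the Jacobi--Trudi expansion should match the combinatorial parity of the residual configurations, yielding cancellation down to a manifestly positive polynomial in $q^2$.
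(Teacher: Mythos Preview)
Your overall architecture---reduce to a sum over $r$-subsets, use a combinatorial model on induced subforests, and build a weight-preserving injection from $T_2$-configurations into $T_1$-configurations---matches the paper's strategy. But you misidentify where the real difficulty lies and propose the wrong tool to resolve it.

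The branching-rule detour is correct but unnecessary. Since $T[S]$ is a forest, the only permutations contributing to $d_\mu(\sL^q_T[S,S])$ are involutions, so after your reduction you are still facing exactly the same character-positivity question you started with, just for $\mu\vdash r$ instead of $\lambda\vdash n$. Nothing has been gained. The paper skips this step and works directly with $\chi_\lambda$ evaluated at cycle types $2^j,1^{n-2j}$.

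The genuine gap is your treatment of positivity for general characters. Your Jacobi--Trudi suggestion is speculative: the alternating signs there do not align with any natural parity on the residual configurations, and there is no reason to expect the cancellation you describe. The paper instead invokes a specific lemma of Chan and Lam: for every $\lambda\vdash n$ and every $i$, one has $\sum_{j=0}^{i}\binom{i}{j}\chi_\lambda(2^j,1^{n-2j}) = \alpha_{\lambda,i}\,2^i$ with $\alpha_{\lambda,i}\ge 0$. Combined with the change of basis from matching weights $m_{r,j}(q)$ to orientation weights $a^T_{r,i}(q)$ via $m_{r,j}(q)=\sum_{i\ge j}\binom{i}{j}a^T_{r,i}(q)$, this yields
\[
c_{\lambda,r}^{\sL_T^q}(q)=\sum_{i=0}^{\lfloor r/2\rfloor}\alpha_{\lambda,i}\,2^i\,a^T_{r,i}(q),
\]
which cleanly separates the character theory (handled once and for all by Chan--Lam) from the combinatorics. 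The theorem then reduces to proving $a^{T_1}_{r,i}(q)-a^{T_2}_{r,i}(q)\in\RR^+[q^2]$ for all $r,i$, a statement with no characters in it at all.

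For that last step the paper does construct an injection, but the details are more delicate than ``un-shifting along the GTS move'': one cannot work $B$-by-$B$ (there are explicit counterexamples), and the paper instead partitions $r$-subsets according to which of the two path-endpoints $1,k$ they contain, pairing certain subsets $B$ with companions $B'$ and building the injection on $\sO^{T_2}_{B,i}\cup\sO^{T_2}_{B',i}\to\sO^{T_1}_{B,i}\cup\sO^{T_1}_{B',i}$ via a case analysis on where the minimum unoriented vertex lies and where vertex $1$ is oriented.
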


For a positive integer $n$, let $[n] = \{1,2,\ldots,n\}$.
Let $\SSS_n$ be the group of permutations of $[n]$.
Let $\chi_{\lambda}^{}$ be the irreducible character of the 
$\SSS_n$ over $\CC$ indexed by the partition $\lambda$ 
of $n$.  We refer the reader to the book by Sagan \cite{sagan-book}
as a reference for results on representation theory that we 
use in this work.
We denote partitions $\lambda$ of $n$ as $\lambda \vdash n$.  This means
we have $\lambda = \lambda_1, \lambda_2, \ldots, \lambda_l$ where $\lambda_i \in \ZZ$ 
for all $i$ with  $\lambda_1 \geq \lambda_2 \geq \cdots \geq \lambda_l > 0$ and with
$\sum_{i=1}^l \lambda_i = n$. We also write partitions using the exponential notation, 
with multiplicities of parts written as exponents.  Since characters of $\SSS_n$ are 
integer valued,  
we think of $\chi_{\lambda}^{}$ as a function $\chi_{\lambda}^{}: \SSS_n \rar \ZZ$. 
Let $\lambda \vdash n$ and let 
$A = (a_{i,j})_{1\leq i,j \leq n}$ be an $n \times n$ matrix.  Define its immanant as
$\displaystyle d_{\lambda}(A) = 
\sum_{\psi \in \SSS_n} \chi_{\lambda}^{}(\psi) \prod_{i=1}^n a_{i,\psi_i}.$
It is well known that $d_{1^n}(A)= \det(A)$ and $d_{n}(A)= \perm(A)$ where
$\perm(A)$ is the permanent of $A$.  

For an $n \times n$ matrix $A$, define 
$f_{\lambda}^A(x)= \displaystyle d_{\lambda}(xI-A)$.  
The polynomial $f_{\lambda}^A(x)$ is called the immanantal polynomial of $A$ corresponding to 
$\lambda \vdash n$. 
Thus, in this notation, $f_{1^n}^A(x)$ is the characteristic polynomial of $A$.  
Let $T$ be a tree with $n$ vertices with Laplacian matrix $L_T$ and define 

\begin{equation}
\label{eqn:lapl_immanant_poly}
f^{L_T}_{\lambda}(x)= d_{\lambda}(xI - L_T) =  
\sum_{r=0}^{n} (-1)^r c_{\lambda,r}^{L_T} x^{n-r} 
\end{equation}

where the $c_{\lambda,r}^{L_T}$'s are coefficients of the Laplacian 
immanantal polynomial of $T$  in absolute value.  Immanantal 
polynomials were studied by Merris 
\cite{merris-second-imm-polynom} where the Laplacian immanantal polynomial
corresponding to the partition $\lambda = 2,1^{n-2}$ (also 
called the second immanantal polynomial) of a 
tree $T$  was shown to have connections with the centroid of $T$.
Botti and Merris \cite{botti-merris-almost_all_trees} showed
that almost all trees share a complete set of Laplacian immanantal polynomials.
When $\lambda=1^n$, Gutman and Pavlovic 
\cite{gutman-pavlovic_Laplacian_coeff} conjectured the following 
inequality  
which was proved by Gutman and Zhou 
\cite{gutman-zhao-connection-Laplacian-spectra}
and independently by Mohar \cite{mohar-Laplacian-coeffs-acyclic-graphs}.

\begin{theorem}[Gutman and Zhou, Mohar]
\label{thm:Gutman_pavlovic_conjecture}
Let $T$ be any  tree on $n$ vertices and let $S_n$ and $P_n$ be the star and the path
trees on $n$ vertices respectively.  Then, for $0 \leq r \leq n$, we have
\begin{equation*}
  c_{1^n,r}^{L_{S_n}} \leq c_{1^n,r}^{L_T} \leq c_{1^n,r}^{L_{P_n}}.
\end{equation*}
\end{theorem}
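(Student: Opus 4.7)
My plan is to reduce the inequality to a purely combinatorial statement on trees, and then to grapple with the character signs.

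First, using the standard expansion $c_{\lambda,r}^A = \sum_{|S|=r}\hat d_\lambda(A[S])$ of immanantal coefficients as a sum of ``relative'' immanants of principal submatrices (with $\chi_\lambda$ evaluated on permutations of $S$ extended by identity on the complement) applied to $A=\sL^q_T$, the tree structure forces every nonzero contribution to come from a permutation whose non-identity cycles are transpositions along edges of $T$. Unfolding each principal immanant produces a sum over triples $(S, M, W)$, where $M\subseteq E(T)$ is a matching with $V(M)\subseteq S$ and $W\subseteq S\setminus V(M)$ selects the $(d_v-1)q^2$ summand of the diagonal entry $1+(d_v-1)q^2$ at each vertex of $W$. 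Each triple contributes $\chi_\lambda(2^{|M|}1^{n-2|M|})\cdot q^{2(|M|+|W|)}\cdot\prod_{v\in W}(d_v-1)$, with every exponent of $q$ even and every degree factor non-negative.

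Since $\chi_\lambda$ is a class function and the cycle type depends only on $k:=|M|$, I would group the sum by $k$ to write
\begin{equation*}
c_{\lambda, r}^{\sL^q_T}(q) \;=\; \sum_{k=0}^{\lfloor r/2 \rfloor} \chi_\lambda(2^k 1^{n-2k})\, N^T(k, r; q),
\end{equation*}
with $N^T(k, r; q)\in\RR^+[q^2]$ a purely tree-combinatorial generating function over pairs $(M,W)$ (with an additional ``padding'' choice of $r-2k-|W|$ extra fixed vertices chosen from $[n]\setminus(V(M)\cup W)$). I would then invoke Csikv\'ari's explicit description of the $\GTS$ cover -- which relocates a subtree along a path of $T_1$ to yield a more ``star-like'' $T_2$ -- to construct, for each $k$ separately, a weight-preserving injection from the triples counted by $N^{T_2}(k,r;q)$ into those counted by $N^{T_1}(k,r;q)$. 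Triples supported away from the altered region should match identically, while triples meeting the altered region pair up according to the $\GTS$ move, giving $N^{T_1}(k,r;q)-N^{T_2}(k,r;q)\in\RR^+[q^2]$.

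The main obstacle is the character-sign assembly: $\chi_\lambda(2^k 1^{n-2k})$ can be negative for some $\lambda$ and $k$, so pointwise non-negativity of each $N^{T_1}(k,r;q)-N^{T_2}(k,r;q)$ does \emph{not} by itself imply $c_{\lambda,r}^{\sL^q_{T_1}}(q)-c_{\lambda,r}^{\sL^q_{T_2}}(q)\in\RR^+[q^2]$. To clear this hurdle I would aim to show a factorization of the form $N^{T_1}(k,r;q)-N^{T_2}(k,r;q)=P(q)\cdot Q_k$ with $P(q)\in\RR^+[q^2]$ independent of $\lambda$ and $Q_k$ a $\lambda$-free integer sequence whose character-weighted sum $\sum_k\chi_\lambda(2^k 1^{n-2k})Q_k$ is non-negative by a Murnaghan--Nakayama style cancellation (attaching each residual triple to a border strip tableau and exhibiting a sign-reversing involution on the residue) or by a branching/induction identity on Young subgroups. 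Establishing this factorization -- essentially an integrality identity about how the $\GTS$ move alters the matching polynomial of the tree -- is where I expect the bulk of the technical work.
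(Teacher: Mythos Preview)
Your opening reformulation is exactly the paper's Lemma~\ref{lem:coeff_in_m_rj}: your $N^T(k,r;q)$ coincides with the paper's $m_{r,k}(q)$, and your display $c_{\lambda,r}^{\sL^q_T}(q) = \sum_k \chi_\lambda(2^k1^{n-2k})\,N^T(k,r;q)$ is its conclusion. You also correctly spot the real obstruction: even for the case $\lambda=1^n$ actually stated here, $\chi_{1^n}(2^k1^{n-2k})=(-1)^k$ alternates in sign, so termwise monotonicity of the $N^T(k,r;q)$ along a $\GTS$-cover does not by itself give the inequality.

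Where you and the paper diverge is the remedy, and here your proposal has a genuine gap. The single-factor decomposition $N^{T_1}(k,r;q)-N^{T_2}(k,r;q)=P(q)\cdot Q_k$ you aim for is too rigid---it would force the differences for all $k$ to be scalar multiples of one fixed polynomial, which does not hold in general---and the Murnaghan--Nakayama cancellation you fall back on is left entirely unspecified. The paper's route is different: it performs a \emph{change of basis} before comparing trees. It introduces $B$-vertex-orientation generating functions $a_{r,i}^T(q)$, related to your quantities by the binomial transform $m_{r,j}(q)=\sum_{i\geq j}\binom{i}{j}a_{r,i}^T(q)$ (Lemma~\ref{lem:reln_aiq_mjq}), and then invokes Chan and Lam's identity $\sum_{j}\chi_\lambda(j)\binom{i}{j}=\alpha_{\lambda,i}2^i$ with $\alpha_{\lambda,i}\geq 0$ (Lemma~\ref{lem:character_sum}). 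These combine to give $c_{\lambda,r}^{\sL^q_T}(q)=\sum_i \alpha_{\lambda,i}2^i a_{r,i}^T(q)$ with \emph{non-negative} structure constants for every $\lambda$ (Corollary~\ref{cor:coeff_interpret}), dissolving the sign problem completely. The $\GTS$-monotonicity is then established for each $a_{r,i}^T(q)$ by explicit away-statistic-preserving injections on orientations (Lemmas~\ref{lem:1_and_k_not_in_O}--\ref{lem:1_and_k_in_O}), not for the matching counts. For the specific statement at hand, $\alpha_{1^n,i}=\delta_{i,0}$ (Corollary~\ref{cor:smaller_dets}), so $c_{1^n,r}^{L_T}=a_{r,0}^T(1)$; combined with Csikv\'ari's Lemma~\ref{lem:csikvari-prelim} that the star and path are the extreme elements of $\GTS_n$, this yields the inequality.
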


Thus, in absolute value, any tree $T$ has coefficients of its Laplacian characteristic 
polynomial sandwiched between the corresponding coefficients of the star and the path 
trees.  
Mohar actually proves stronger inequalities than this result, see 
Csikv{\'a}ri  \cite[Section 10]{csikvari-poset2} for information on Mohar's
stronger results.  Much earlier, Chan, Lam and Yeo in their preprint 
\cite{chan_lam_yeo}, proved the following.

\begin{theorem}[Chan, Lam and Yeo]
  \label{thm:chan_lam_yeo}
Let $T$ be any  tree on $n$ vertices with Laplacian $L_T$ and let $S_n$ and $P_n$ 
be the star and the path
trees on $n$ vertices respectively.  Then, 
for all $\lambda \vdash n$ and $0\leq r \leq n$, 
\begin{equation}
  \label{eqn:chan_lam_yeo}
  c_{\lambda,r}^{L_{S_n}} \leq c_{\lambda,r}^{L_T} \leq c_{\lambda,r}^{L_{P_n}}.
\end{equation}
\end{theorem}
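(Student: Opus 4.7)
The plan is to deduce Theorem~\ref{thm:chan_lam_yeo} from a cover-relation monotonicity in the $\GTS_n$ poset. Csikv\'ari's results place the path $P_n$ and the star $S_n$ as the two extremal elements of $\GTS_n$, with every unlabelled tree $T$ lying on a saturated chain joining them; the convention is that the characteristic polynomial coefficients decrease when going up. So it suffices to establish
\begin{equation*}
c_{\lambda,r}^{L_{T_2}} \;\leq\; c_{\lambda,r}^{L_{T_1}}
\qquad \text{whenever $T_2$ covers $T_1$ in $\GTS_n$,}
\end{equation*}
for every $\lambda \vdash n$ and every $0 \leq r \leq n$. Iterating along the two halves of a saturated chain $P_n \lessdot \cdots \lessdot T \lessdot \cdots \lessdot S_n$ then produces the sandwich $c_{\lambda,r}^{L_{S_n}} \leq c_{\lambda,r}^{L_T} \leq c_{\lambda,r}^{L_{P_n}}$.

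To prove the cover inequality, I would first expand each coefficient via principal submatrices of $L_T$. Separating fixed and non-fixed coordinates of the running permutation in $d_\lambda(xI - L_T)$ gives
\begin{equation*}
c_{\lambda,r}^{L_T} \;=\; \sum_{\substack{S \subseteq V(T) \\ |S|=r}} \; \sum_{\sigma \in \SSS_S} \chi_\lambda(\widehat{\sigma}) \prod_{i \in S} (L_T)_{i,\sigma(i)},
\end{equation*}
where $\widehat{\sigma} \in \SSS_n$ denotes the extension of $\sigma$ by the identity outside $S$. Since $L_T$ is supported on the edges of $T$ and a tree has no cycles of length $\geq 3$, the only surviving $\sigma$ are products of disjoint transpositions along edges of $T$. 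Reparametrising by the underlying matching $M$ in $T$ and its complementary fixed-point set $F \subseteq V(T) \setminus V(M)$ of size $r - 2|M|$ yields the closed form
\begin{equation*}
c_{\lambda,r}^{L_T} \;=\; \sum_{k \geq 0} \chi_\lambda\bigl(2^{k}\,1^{n-2k}\bigr) \; \sum_{\substack{M \text{ matching of }T \\ |M|=k}} e_{r-2k}\bigl(\deg_T(i) : i \in V(T) \setminus V(M)\bigr),
\end{equation*}
where $e_j$ is the elementary symmetric polynomial in the listed degrees and each character value depends only on the matching size $k$.

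Finally, I would compare these explicit expressions for $T_1$ and $T_2$ using the local nature of the $\GTS_n$ cover. Csikv\'ari's shift modifies $T_1$ into $T_2$ only in a small neighbourhood around a single edge, inducing a size-preserving correspondence between matchings of $T_1$ and of $T_2$ together with a controlled, localised change in the degree multiset on the complementary vertices. The main obstacle is that the character values $\chi_\lambda(2^k\,1^{n-2k})$ oscillate in sign as $k$ varies, with the sign pattern depending on $\lambda$, so a naive $k$-by-$k$ monotonicity cannot establish the inequality uniformly in $\lambda$. The technical heart of the argument must therefore be an explicit injection at the level of pairs $(M,F)$ that preserves the matching size $k$---and hence the character weight $\chi_\lambda(\widehat{\sigma})$ for every $\lambda$ simultaneously---while relating the degree-product weights in a sign-definite way favouring $T_1$. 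Producing such a cycle-type-preserving, weight-comparable injection rather than a separate case-by-case argument for each $\lambda$ is the crux of the proof, and it is precisely the step where the explicit combinatorial definition of the $\GTS_n$ shift (a specific local edge swap) must be exploited.
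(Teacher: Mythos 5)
Your high-level reduction is exactly what the paper does: pass to a cover-relation inequality $c_{\lambda,r}^{L_{T_2}} \leq c_{\lambda,r}^{L_{T_1}}$ for $T_2 \gtrdot T_1$ in $\GTS_n$, then iterate along saturated chains using Csikv\'ari's fact that $P_n$ and $S_n$ are the minimum and maximum. Your closed form
$c_{\lambda,r}^{L_T} = \sum_{k} \chi_\lambda(2^k 1^{n-2k}) \sum_{|M|=k} e_{r-2k}(\deg(i) : i \notin V(M))$
is also essentially the identity in Lemma~\ref{lem:coeff_in_m_rj} (specialised to $q=1$), i.e.\ the paper's $m_{r,j}$.

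However, there is a genuine gap in the step you call ``the technical heart.'' You correctly observe that $\chi_\lambda(2^k 1^{n-2k})$ oscillates in sign, and correctly conclude that $k$-by-$k$ monotonicity is insufficient. But the remedy you propose --- a matching-size-preserving injection on pairs $(M,F)$ that also compares degree weights favourably --- is \emph{still} a $k$-by-$k$ monotonicity statement: it would only give $m_{r,k}^{T_1} - m_{r,k}^{T_2} \geq 0$ for each $k$, and the signed sum $\sum_k \chi_\lambda(k)\bigl(m_{r,k}^{T_1} - m_{r,k}^{T_2}\bigr)$ can then still be negative for some $\lambda$. Preserving cycle type preserves the character \emph{value}, not its sign; one cannot conclude positivity of a signed sum from termwise positivity of the absolute differences. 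So the injection you describe, even if you constructed it, would not close the proof.

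The missing idea is a change of basis. The paper does not work directly with the matching sums $m_{r,j}$; it passes to the vertex-orientation sums $a_{r,i}$ via Lemma~\ref{lem:reln_aiq_mjq}, $m_{r,j} = \sum_i \binom{i}{j} a_{r,i}$, and then invokes the Chan--Lam non-negativity result (Lemma~\ref{lem:character_sum}) that $\sum_j \chi_\lambda(j)\binom{i}{j} = \alpha_{\lambda,i}\,2^i$ with $\alpha_{\lambda,i} \geq 0$. This converts the coefficient into the genuinely non-negative combination $c_{\lambda,r} = \sum_i \alpha_{\lambda,i}\,2^i\,a_{r,i}$ (Corollary~\ref{cor:coeff_interpret}). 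Only \emph{after} this re-expression does a bidirected-arc-count-preserving injection $\sO_{r,i}^{T_2} \to \sO_{r,i}^{T_1}$ (Lemmas~\ref{lem:1_and_k_not_in_O}, \ref{lem:1_and_k_in_O}, Corollary~\ref{cor:mix_1_and_k}) become sufficient, because every coefficient multiplying $a_{r,i}^{T_1} - a_{r,i}^{T_2}$ is already known to be non-negative uniformly in $\lambda$. Your proposal stops short of this transform; you would need to discover or invoke the Chan--Lam lemma and the matchings-to-orientations correspondence to make the injection strategy actually deliver the inequality for all $\lambda$.
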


In this work, we consider the $q$-Laplacian matrix $\sL_T^q$ of a tree $T$ on $n$ vertices.  
It is defined as $\sL_T^q = I + q^2(D-I) - qA$ where $q$ is a variable, 
$D$ is the diagonal matrix with degrees on the diagonal and $A$ is the adjacency 
matrix of $T$.  $\sL_T^q$ can be defined for arbitrary graphs $G$ analogously
and it is clear that when $q=1$, $\sL_G^q = L_G$.  The matrix $\sL_G^q$ has 
occurred previously in connection with the Ihara-Selberg zeta function of $G$
(see Bass \cite{bass} and Foata and Zeilberger 
\cite{foata-zeilberger-bass-trams}). 
For trees, $\sL_T^q$ has connections with the inverse of $T$'s exponential
distance matrix (see Bapat, Lal and Pati \cite{bapat-lal-pati}).  As done in
\eqref{eqn:lapl_immanant_poly}, define

\begin{equation}
  \label{eqn:q-lapl_immanant_poly}
  f^{\sL_T^q}_{\lambda}(x) = d_{\lambda}(xI -\sL_T^q) = \sum_{r=0}^n (-1)^r c_{\lambda,r}^{\sL_T^q}(q) x^{n-r}.
\end{equation}

We consider the following counterpart of inequalities like 
\eqref{eqn:chan_lam_yeo} when each 
coefficient is a polynomial in the variable $q$:  we want the difference
$c_{\lambda,r}^{\sL_T^q}(q) - c_{\lambda,r}^{\sL_{S_n}^q}(q) \in \RR^+[q]$.  That
is, the difference polynomial has only positive coefficients.  This is 
the standard way to get $q$-analogue of inequalities.  Similarly, we want
$c_{\lambda,r}^{\sL_{P_n}^q}(q) - c_{\lambda,r}^{\sL_{T}^q}(q) \in \RR^+[q]$.

We mention a few lines about our proof of Theorem \ref{thm:main}.  In 
\cite[Theorem 5.1]{csikvari-poset2}, Csikv{\'a}ri gives a ``General Lemma'' from
which he infers properties about polynomials associated to trees.  
In that lemma, the following crucial property is needed when 
dealing with characteristic polynomials of matrices.  Let $M= A \oplus B$
be an $n \times n$ matrix that can be written as a direct sum of two 
square matrices.  Then, clearly $\det(M) = \det(A) \det(B)$.  
This property is sadly not true for other immanants.  That is, 
$d_{\lambda}(M) \not= d_{\lambda}(A) d_{\lambda}(B)$ 
(indeed, the definition of 
$d_{\lambda}(A)$ is not clear 
when $\lambda \vdash n$ and $A$ is an $m \times m$ matrix with $m < n$).  
We thus combinatorialise the immanant as done by Chan, Lam and
Yeo \cite{chan_lam_yeo} and express the immanantal polynomial
in terms of matchings and vertex orientations.  
Section \ref{sec:gts_poset} gives preliminaries on the $\GTS_n$
poset and Section \ref{sec:Bmatchings} gives the necessary background on
$B$-matchings, $B$-vertex orientations and their connection
to coefficients of immanantal polynomials.  We give our proof of  Theorem \ref{thm:main}
in Section \ref{sec:coeff_imman_poly} and draw several corollaries
in Sections \ref{sec:centroid}, 
\ref{sec:qt_laplacian} 
and 
\ref{sec:expon_dist_mat} 
involving the $q^2$-analogue of 
vertex moments in a tree, $q,t$-Laplacian matrices which
include the Hermitian Laplacian of $T$ and $T$'s 
exponential distance matrices.

\section{The poset $\GTS_n$} 
\label{sec:gts_poset}
Though Csikv{\'a}ri in \cite{csikvari-poset1} defined the poset
on unlabelled trees with $n$ vertices, we will label
the vertices of the trees according to some convention (see Remark \ref{rem:label}).
We recall the definition of this poset.

\begin{figure}[h]
\centerline{\includegraphics[scale=0.55]{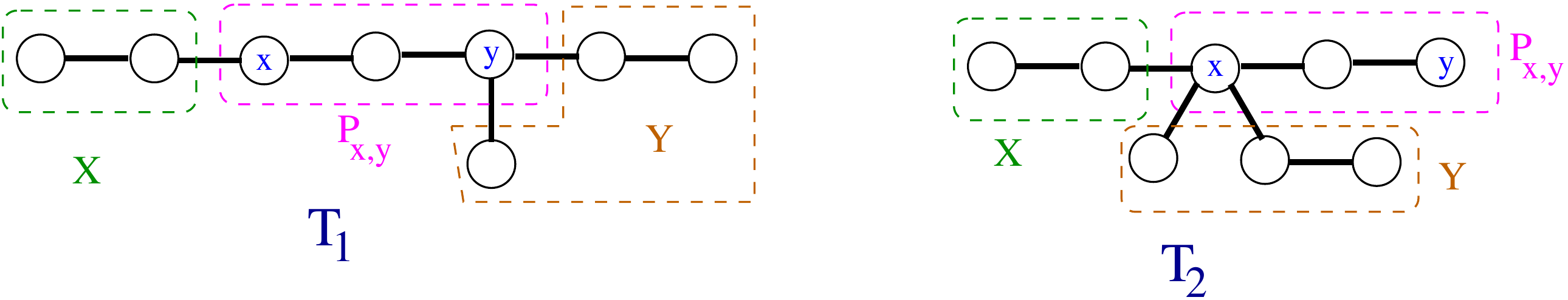}}
\caption{Two trees with $T_2 \geq_{\GTS_n} T_1$ and $T_2$ covering $T_1$.}
\label{fig:gts_example}
\end{figure}

\begin{definition}
  Let $T_1$ be a tree on $n$ vertices and $x, y$ be two vertices of $T_1$.  Let $P_{x,y}$ be the
  unique path in $T_1$ between $x$ and $y$.  Assume that $x$ and $y$ are such that 
all the interior vertices (if they exist) on $P_{x,y}$ have degree 2.  
Let $z$ be the neighbour of $y$ on the path $P_{x,y}$.  Consider the tree $T_2$ obtained by 
moving all neighbours of $y$ except $z$ to the vertex $x$. 
This is illustrated in Figure \ref{fig:gts_example}.  This move helps us
to partially order the set of unlabeled trees on $n$ vertices.   We 
denote this poset on trees with $n$ vertices as $\GTS_n$.  
We say $T_2$ is above $T_1$ in $\GTS_n$ or that $T_1$ is below $T_2$ in $\GTS_n$
and denote it as $T_2 \geq_{\GTS_n} T_1$.
The poset $\GTS_6$ is illustrated in Figure \ref{fig:gts6}.
\end{definition}

If $T_2 \geq_{\GTS_n} T_1$ and there is no tree $T$ with $T \not= T_1, T_2$ 
such that $T_2 \geq_{\GTS_n} T \geq_{\GTS_n} T_1$, then we say $T_2$ covers
$T_1$ (see Figure \ref{fig:gts_example}).
If either $x$ or $y$ is a leaf vertex in $T_1$, then it is easy to check that $T_2$ is 
isomorphic to $T_1$.  If neither $x$ nor $y$ is a leaf in $T_1$, then $T_2$ 
is said to be obtained from $T_1$ by a proper generalized tree shift (PGTS henceforth).
Clearly, if $T_2$ is obtained by a PGTS from $T_1$, then, the number 
of leaf vertices of $T_2$ is one more than the number of leaf vertices of $T_1$. 
Csikv{\'a}ri in \cite{csikvari-poset1} showed the following.

\begin{lemma}[Csikv{\'a}ri]
  \label{lem:csikvari-prelim}
Every tree $T$ with $n$ vertices other than the path, lies above some 
other tree $T'$ on $\GTS_n$.  The star tree on $n$ vertices is the maximal element and
the path tree on $n$ vertices is the minimal element of $\GTS_n$.
\end{lemma}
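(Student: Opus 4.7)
The plan is to exploit the elementary fact that a PGTS from $T_1$ to $T_2$ increases the number of leaves by exactly one. Indeed, during a PGTS the vertex $y$ (a non-leaf in $T_1$) becomes a leaf in $T_2$ since $z$ becomes its only neighbour, the vertex $x$ remains a non-leaf since it only gains neighbours, and every other vertex keeps its neighbourhood because entire subtrees are relocated without disturbing their internal structure. Since an $n$-vertex tree has between $2$ and $n-1$ leaves, with equality at the lower end holding uniquely for the path $P_n$ and at the upper end uniquely for the star $S_n$, the minimality of $P_n$ and maximality of $S_n$ will follow once I produce, for every non-path tree a strict predecessor, and for every non-star tree a strict successor.

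For the predecessor construction: given a non-path $T$, pick any leaf $y$, let $z$ be its unique neighbour, and walk from $z$ along the unique path available, continuing as long as we meet degree-$2$ vertices; since $T$ is not a path it contains a vertex of degree at least $3$, so this walk terminates at a first vertex $x$ with $\deg_T(x)\geq 3$. By construction all interior vertices of $P_{x,y}$ have degree $2$. Because $\deg_T(x)\geq 3$, the vertex $x$ has at least two neighbours lying outside the $x$-$y$ path; let $T'$ be the tree obtained by detaching at least one (but not all) of these off-path subtrees from $x$ and reattaching them at $y$. Then $x$ and $y$ are both non-leaves in $T'$, and applying the PGTS to $T'$ at the pair $(x,y)$ recovers $T$, yielding $T>_{\GTS_n} T'$.

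For the successor construction: given a non-star $T$, there are at least two non-leaf vertices (the star being the unique tree with a single non-leaf). Among all pairs of non-leaves, pick a pair $(x,y)$ that minimises the length of $P_{x,y}$. Any interior vertex $w$ of $P_{x,y}$ must have degree $2$, for otherwise $w$ is itself a non-leaf, and either $(x,w)$ or $(w,y)$ would be a non-leaf pair with a strictly shorter connecting path, contradicting minimality. Hence the PGTS at $(x,y)$ applies and produces a strict successor $T''>_{\GTS_n} T$. Combining the two constructions, iterated descent from any $T$ must terminate—leaf count cannot drop below $2$—at the unique tree with two leaves, namely $P_n$, while iterated ascent must terminate—leaf count cannot exceed $n-1$—at the unique tree with $n-1$ leaves, namely $S_n$.

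The main subtlety I expect to handle is, in the predecessor construction, verifying that the choice of which off-path subtrees of $x$ to reattach to $y$ is flexible enough: we need $x$ to retain at least one off-path neighbour so that $x$ stays a non-leaf in $T'$, which forces us to move strictly fewer than all of $x$'s off-path subtrees; and we need $y$ to receive at least one such subtree so that $y$ is a non-leaf in $T'$, which is automatic. Both requirements are simultaneously achievable precisely because $\deg_T(x)\geq 3$ guarantees at least two off-path neighbours of $x$. The only other point worth mentioning is the uniqueness of the extremal trees at each leaf-count, which reduces to the trivial observations that a tree with exactly two leaves must have all internal vertices of degree $2$ (hence is a path) and a tree with $n-1$ leaves must have a single internal vertex of degree $n-1$ (hence is a star).
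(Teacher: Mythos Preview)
The paper does not supply its own proof of this lemma; it is quoted from Csikv\'ari's original work and stated without argument. Your proof is correct and self-contained, and the leaf-counting device you use---that a proper generalized tree shift increases the number of leaves by exactly one---is precisely the mechanism Csikv\'ari employs. Your predecessor construction (starting from a leaf, walking to the first branch vertex, and splitting off some but not all of its off-path subtrees) and your successor construction (taking a closest pair of non-leaves so that the connecting path automatically has degree-$2$ interior) are both valid, and the monotonicity of the leaf count then forces descending and ascending chains to terminate at $P_n$ and $S_n$ respectively.
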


\begin{figure}[h]
\centerline{\includegraphics[scale=0.35]{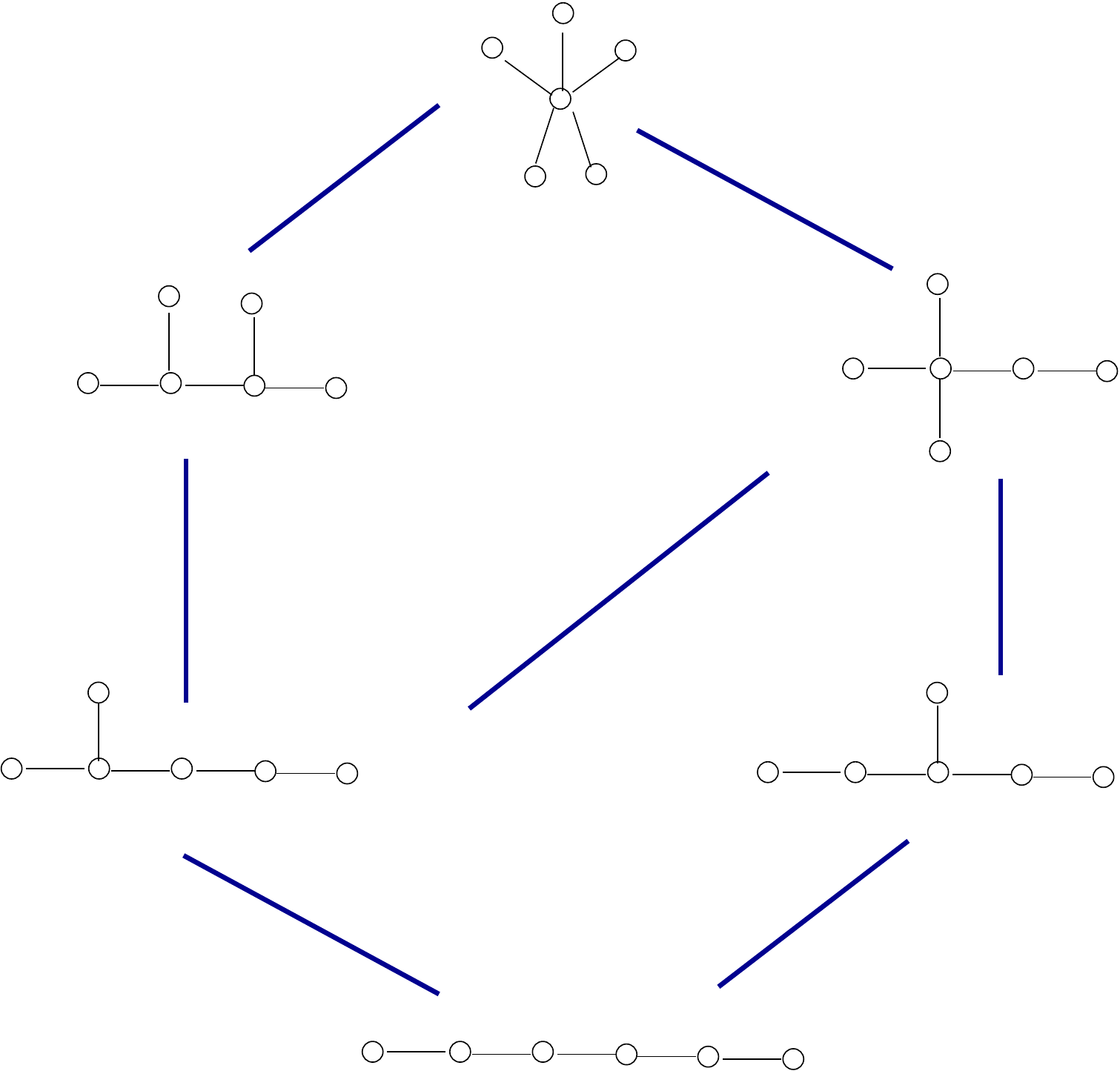}}
\caption{The poset $\GTS_6$ on trees with 6 vertices.}
\label{fig:gts6} 
\end{figure}

\section{$B$-matchings and $B$-vertex orientations}
\label{sec:Bmatchings}
As done in earlier work \cite{mukesh-siva-hook}, we use matchings in $T$ to index
terms that arise in the computation of the immanant $d_{\lambda}(\sL_T^q)$.
A dual concept of vertex orientations was used to get a near positive expression
for immanants of $\sL_T^q$.

In this work, we need to find $f_{\lambda}^{\sL_T^q}(x) = d_{\lambda}(xI - \sL_T^q)$.  
As done by Chan, Lam and Yeo \cite{chan_lam_yeo}, we 
index terms that occur in the computation of $f_{\lambda}^{\sL_T^q}(x)$ by 
partial matchings that we term as $B$-matchings.   
Let $T$ have vertex set $V$ and edge set $E$.  Let $B \subseteq V$ with 
$|B| = r$ and let $F_B$ be the forest induced by $T$ on the set $B$.  
A $B$-matching of $T$ is a subset $M \subseteq E(F_B)$ of 
edges of $F_B$ such that each vertex $v \in B$ is adjacent to at most one edge in $M$.  
If the number of edges in $M$ equals $j$, then $M$ is called a $j$-sized
$B$-matching in $T$.  Let $\sM_j(B)$ denote the set of $j$-sized $B$-matchings in $T$.  
For vertex $v$, we  denote its degree $\deg_{T}(v)$ in $T$  
alternatively as $d_v$.  For $M \in \sM_j(B)$, define a polynomial 
weight $\wt_{B,M}(q) =q^{2j} \displaystyle \prod_{v \in B-M} [1+q^2(d_v-1)]$. 
Define 
\begin{equation*}
m_{B,j}^{}(q) = \displaystyle \sum_{M \in \sM_j(B)} \wt_{B,M}(q)
\mbox{  and  } \ \ m_{r,j}^{}(q) =\displaystyle \sum_{B \subset V, |B|=r} m_{B,j}^{}(q).
\end{equation*}

Define $\chi_{\lambda}^{}(j)$ to be the character 
$\chi_{\lambda}^{}( \cdot )$ evaluated at such a permutation
with cycle type $2^j, 1^{n-2j}$.  The following lemma is 
straightforward from the definition of immanants.

\begin{lemma}
\label{lem:coeff_in_m_rj}
Let $T$ be a tree on vertex set $[n]$ with $q$-Laplacian $\sL_T^q$.  
Let $\lambda \vdash n$
and let $0 \leq r \leq n$. Then, the coefficient $c_{\lambda,r}^{\sL_T^q}(q)$ 
as defined in \eqref{eqn:q-lapl_immanant_poly} equals
\begin{equation*}
  c_{\lambda,r}^{\sL_T^q}(q) = \sum\limits_{j=0}^{\rhalf} \chi_{\lambda}^{}(j)m_{r,j}^{}(q).
\end{equation*}
\end{lemma}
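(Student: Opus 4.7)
The plan is to expand the immanant $d_{\lambda}(xI-\sL_T^q)$ directly from its definition and then collect the coefficient of $x^{n-r}$, matching it term-by-term with the combinatorial weights $\wt_{B,M}(q)$.

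\textbf{Step 1 (which permutations contribute).} Recall that $(\sL_T^q)_{ii}=1+q^2(d_i-1)$, $(\sL_T^q)_{ij}=-q$ when $ij\in E(T)$, and zero otherwise. Hence $(xI-\sL_T^q)$ has diagonal entries $x-1-q^2(d_i-1)$, off-diagonal entries $q$ on edges, and zeros elsewhere. For a permutation $\psi\in\SSS_n$ to contribute a non-zero term, every non-fixed point $i$ must satisfy $(i,\psi(i))\in E(T)$. Since $T$ is acyclic, the non-trivial cycles of $\psi$ can only be $2$-cycles along edges of $T$. Thus the contributing $\psi$ are exactly those of cycle type $2^j, 1^{n-2j}$ whose transpositions form a $j$-matching $M$ of $T$; there is a bijection between such $\psi$ and matchings $M$ of $T$.

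\textbf{Step 2 (the immanant as a sum over matchings).} Writing $V(M)$ for the set of matched vertices and using $\chi_\lambda(\psi)=\chi_\lambda(j)$ on the cycle class $2^j,1^{n-2j}$, the product $\prod_i (xI-\sL_T^q)_{i,\psi(i)}$ equals $q^{2j}\prod_{i\notin V(M)}[x-1-q^2(d_i-1)]$. Summing over $\psi$ gives
\begin{equation*}
d_\lambda(xI-\sL_T^q)=\sum_{j=0}^{\nhalf}\chi_\lambda(j)\sum_{M\in\sM_j(T)} q^{2j}\prod_{i\notin V(M)}\bigl[x-(1+q^2(d_i-1))\bigr].
\end{equation*}

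\textbf{Step 3 (extracting the coefficient of $x^{n-r}$).} Expanding the inner product, the coefficient of $x^{n-r}$ is a sum over subsets $B'\subseteq V\setminus V(M)$ with $|B'|=r-2j$, with contribution $(-1)^{r-2j}\prod_{i\in B'}[1+q^2(d_i-1)]$. Now set $B=V(M)\cup B'$. Then $|B|=r$, $V(M)\subseteq B$ (so $M$ is a $j$-sized $B$-matching), and $B\setminus V(M)=B'$; since $2j$ is even, $(-1)^{r-2j}=(-1)^r$. Reindexing by the pair $(B,M)$ and using the definition of $\wt_{B,M}(q)$, the coefficient of $x^{n-r}$ in $d_\lambda(xI-\sL_T^q)$ becomes
\begin{equation*}
(-1)^r\sum_{j}\chi_\lambda(j)\sum_{B\subseteq V,\ |B|=r}\ \sum_{M\in\sM_j(B)}\wt_{B,M}(q)
=(-1)^r\sum_{j}\chi_\lambda(j)\, m_{r,j}(q).
\end{equation*}

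\textbf{Step 4 (conclusion and the range of $j$).} Comparing with \eqref{eqn:q-lapl_immanant_poly} yields $c_{\lambda,r}^{\sL_T^q}(q)=\sum_j \chi_\lambda(j)\,m_{r,j}(q)$. Finally, any nonzero $M\in\sM_j(B)$ requires $2j=|V(M)|\le|B|=r$, which bounds the summation by $j\le\rhalf$, giving the stated formula. The only substantive bookkeeping step is the reparametrisation $(M,B')\mapsto (B,M)$ in Step 3; everything else is a direct unpacking of definitions, so there is no real obstacle beyond keeping track of the signs and of which vertices are matched versus which are ``forced'' to contribute the diagonal factor.
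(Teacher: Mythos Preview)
Your proof is correct and follows essentially the same approach as the paper. The only cosmetic difference is the order of grouping: the paper first fixes the subset $B$ (packaging the coefficient of $x^{n-r}$ as $\sum_{|B|=r} d_\lambda\bigl(\begin{smallmatrix}\sL_T^q[B|B]&0\\0&I\end{smallmatrix}\bigr)$) and then identifies the contributing permutations as $B$-matchings, whereas you first sum over matchings $M$ in $T$ and then choose the remaining vertices $B'=B\setminus V(M)$; both routes compute the same double sum $\sum_{|B|=r}\sum_{M\in\sM_j(B)}\chi_\lambda(j)\,\wt_{B,M}(q)$.
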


\begin{proof}
Let $B\subseteq [n]$ with $|B|=r$. Then, clearly
$ 
 c_{\lambda,B}^{\sL_T^q}(q)=d_{\lambda}
 \left[
 \begin{array}{c c} 
 \sL_T^q[B|B] & 0 \\
 0 & I
 \end{array}
 \right]$, 
where $\sL_T^q[B|B]$ is the sub-matrix of $\sL_{T}^q$ 
induced on the rows and columns with indices in the set $B$ and 
$I$ is the $n-r \times n-r$ identity matrix. Further,
it is clear that $c_{\lambda,r}^{\sL_T^q}(q) = 
\sum_{B \subseteq [n], |B| = r} c_{\lambda,B}^{\sL_T^q}(q)$.

Note that there is no cycle in $T$, and hence
in the forest $F_B$. 
Thus, each permutation $\psi \in \SSS_n$ which
in cycle notation has a cycle of length strictly greater than $2$, 
will satisfy $\prod_{i=1}^n \ell_{i,\psi_i}=0$. 
Therefore, only permutations $\psi \in \SSS_n$ 
which fix the set $[n]-B$ and have cycle type
$2^j, 1^{n-2j}$  contribute to $c_{\lambda,B}^{\sL_T^q}(q)$.  
It is easy to see that such permutations 
can be identified with $j$-sized $B$-matchings in $F_B$ 
and that  this correspondence is reversible. 

 Recall $\sM_j(B)$ is the set of $j$-sized $B$ matchings in $T$.  
Clearly, the contribution to $c_{\lambda,B}^{\sL_T^q}(q)$ from
permutations which fix $[n]-B$ and have cycle-type $2^j, 1^{n-2j}$ 
is $\chi_{\lambda}^{ }(j) m_{B,j}^{}(q)$.
Thus, we see that
\begin{equation}
\label{eq:coeff_c_in_B} 
c_{\lambda,B}^{\sL_T^q}(q) 
= \sum_{j=0}^{\rhalf} \chi_{\lambda}^{ }(j) m_{B,j}^{}(q).
\end{equation}

Summing over various $B$'s of size $r$ completes the proof.
\end{proof}

\subsection{$B$-vertex orientations}
As done by Chan, Lam and Yeo \cite{chan_lam_yeo}, we next express coefficients 
of the immanantal polynomial as a sum of almost positive summands where
the summands are indexed by partial vertex orientations that
we term as $B$-vertex orientations.

Let $T$ be a tree with vertex set $V = [n]$.
For $B \subseteq [n]$, we orient each vertex $v \in B$ to one of its neighbours 
(which may or may not be in $B$).  Such vertex orientations are termed
as $B$-vertex orientations.  Let $O$ be a $B$-vertex orientation.  Each $v \in B$ has 
$d_v$ orientation choices.  We depict the orientation $O$ in pictures
by drawing an arrow on the edge from $v$ to its oriented 
neighbour and directing the arrow away from $v$.  
We do not distinguish between $O$ and its picture 
from now on.  In $O$, edges thus get arrows and 
there may be edges 
which have two arrows,  one in each direction (see Figures
\ref{fig:case0}, \ref{fig:case2prime} and \ref{fig:last_map_example}
for examples).  We call such edges as 
{\it bidirected arcs} and let $\bd(O)$ denote the set of 
bidirected arcs in $O$. We extend this notation to vertices $v \in B$ and say
$v \in \bd(O)$ if $\{u,v\} \in \bd(O)$ for some $u \in B$.  We also 
say $v \in B$ is {\sl free} in $O$ if $v \in B - \bd(O)$ and denote
by $\fr(O)$ the set of free vertices of $O$.

In $T$, let $\sO_{B,i}^T$ be the set of $B$-orientations 
$O$, such that $O$ has $i$ bidirected arcs.   We need to separate 
the case $B = V$ from the cases $B \neq V$.  First, let $B \neq V$.  
For such a $B \subseteq V$, let $m = \min_{v \in [n]-B}^{} v$ be the minimum
numbered vertex outside $B$ and let $O \in \sO_{B,i}^T$.  For each 
$v \in \fr(O)$, as there is a unique path from $v$ to $m$ in $T$,
we can tell if $v$ is oriented {\it ``towards''} $m$ or if $v$ is 
oriented {\it ``away from''} $m$.  Formally, for $O \in \sO_{B,i}^T$, 
define a 0/1 function $\aw: \fr(O) \rightarrow \{0,1\}$ by 

$$\aw(v)=\left\{
\begin{array}{l l}
1 &\mbox{ if $v$ is oriented away from } m, \\
0 & \mbox{if $v$ is oriented towards } m. \\
\end{array}  
\right.$$ 

For each $O\in \sO_{B,i}^T$ assign the following non-negative integer: 
$$\awy_B^{T}(O) = 2i+2\sum_{v\in \fr(O)} \aw(v).$$

Define the generating function of the statistic $\awy_B^{T}(\cdot)$ in the
variable $q$ as follows:

\begin{eqnarray}
\label{eq:def_a_B,i}
a_{B,i}^T(q) & = & \sum_{O \in \sO_{B,i}^T} q^{\awy_B^{T}(O)}, \\
\label{defn:ais_sum}
a_{r,i}^T(q) & = & \sum_{B \subset V, |B|=r} a_{B,i}^T(q) = 
\sum_{B \subseteq V, |B| = r}  \sum_{O \in \sO_{B,i}^T} q^{\awy_B^T(O)}.
\end{eqnarray}

\begin{example}
  Let $T_2$ be the tree given in Figure 
\ref{fig:gts_label_example}
  and let $B = \{2,4,6,7,8\}$
  with $|B| = r = 5$.  Below we give $a_{B,i}^{T_2}(q)$ for $i$ from 0 to $\rhalf$.

\begin{center}
	 $\begin{array}{|c|c|c|c|}  \hline
	i & 0 & 1 & 2   \\ \hline
	a_{B,i}^{T_2}(q) & 1+2q^2+q^4 & q^2(1+2q^2+q^4) & 0 \\ \hline
  \end{array}$
\end{center}
\end{example}

\begin{remark}
  \label{rem:poly_q_sq}
For any tree $T$ and all $r,j$, it is easy to see from the definitions
that both $m_{r,j}^{}(q)$ and $a_{r,i}^T(q)$ are polynomials in $q^2$. 
\end{remark}

Chan, Lam and Yeo in \cite{chan_lam_yeo} showed for the Laplacian 
$L_T$ of a tree $T$, a relation involving numerical counterparts of $m_{B,j}^{}(q)$'s
and $a_{B,i}^T(q)$'s.  Chan and Lam \cite{hook_immanant_explained-chan_lam}
had already proved this identity 
for the special case when $B = [n]$.  Earlier, we had in 
\cite[Theorem 11]{mukesh-siva-hook}  
obtained a $q$-analogue of this identity when $B = [n]$.
There, care had to be taken to define 
$a_{[n],0}^T(q) = 1-q^2$.  We give a $q$-analogue  below in Lemma 
\ref{lem:reln_aiq_mjq} when $B$ can be an arbitrary subset.  
In \cite{mukesh-siva-hook}, since $B = [n]$, there was no
vertex outside $B$ and hence $m$ could not be defined.  There, the lexicographically
minimum edge of the matching $M$ was used in place of $m$.  It is easy to
see that we could have used the lexicographically minimum edge of $M$ 
when $B \neq [n]$ as well.  
From now onwards, we are free from this restriction $B \not= [n]$.
Since the proof is identical to that of 
\cite[Theorem 11]{mukesh-siva-hook}, we omit it and merely
state the result.

\begin{lemma}
  \label{lem:reln_aiq_mjq}
Let $T$ be a tree with vertex set $[n]$ and $B$ be an $r$-subset of $[n]$. Then,  
$$m_{B,j}^{}(q)=\sum\limits_{i=j}^{\rhalf} {i \choose j} a_{B,i}^T(q).  \mbox{  Moreover, }
m_{r,j}^{}(q)=\sum\limits_{i=j}^{\rhalf} {i \choose j} a_{r,i}^T(q).$$
\end{lemma}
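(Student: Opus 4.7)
The plan is to prove the identity $m_{B,j}(q) = \sum_{i=j}^{\rhalf} \binom{i}{j} a_{B,i}^T(q)$ by exchanging the order of summation, since the second identity follows at once by summing over all $r$-subsets $B$ of $[n]$.

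First, I would rewrite the right hand side in a form that makes the interchange visible. Grouping $B$-orientations by their number of bidirected arcs and using the fact that any $j$-subset of $\bd(O)$ is automatically a $j$-sized $B$-matching (since the bidirected arcs of $O$ already form a matching on $B$), we get
$$\sum_{i=j}^{\rhalf} \binom{i}{j}\, a_{B,i}^T(q) \;=\; \sum_{O} \binom{|\bd(O)|}{j}\, q^{\awy_B^{T}(O)} \;=\; \sum_{M \in \sM_j(B)} \; \sum_{O \,:\, M \subseteq \bd(O)} q^{\awy_B^{T}(O)}.$$
The task then reduces to showing that the inner sum, for each fixed $M \in \sM_j(B)$, equals $\wt_{B,M}(q) = q^{2j} \prod_{v \in B - M}[1 + q^2(d_v - 1)]$.

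Given such an $M$, a $B$-orientation $O$ with $M \subseteq \bd(O)$ is specified by independently choosing, for each $v \in B - M$, one of $v$'s $d_v$ neighbours as the target of $v$'s arrow (the endpoints of edges in $M$ are already oriented toward their matching partners). The central claim is the per-vertex factorisation
$$q^{\awy_B^{T}(O)} \;=\; q^{2j} \cdot \prod_{v \in B - M} q^{2\alpha(v)},$$
where $\alpha(v) \in \{0, 1\}$ is $1$ when $v$ points away from $m$ and $0$ when $v$ points toward $m$. This is verified by splitting $\awy_B^{T}(O) = 2|\bd(O)| + 2\sum_{v \in \fr(O)} \aw(v)$ into three contributions: (i) the $j$ bidirected arcs lying inside $M$ yield the prefactor $q^{2j}$; (ii) each bidirected arc $\{u, v\} \in \bd(O) - M$ has one endpoint closer to $m$ than the other (because $T$ is a tree, so the unique path from one endpoint to $m$ passes through the other iff that other vertex is closer), so exactly one of $u, v$ is oriented away from $m$ and the pair jointly contributes $q^{0}\cdot q^{2} = q^{2}$, matching the $2|\bd(O)|$ charge for that arc; (iii) each free vertex $v$ contributes $q^{2\aw(v)} = q^{2\alpha(v)}$ directly.

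Once the weight is factored vertex by vertex, the inner sum collapses to
$$\sum_{O \,:\, M \subseteq \bd(O)} q^{\awy_B^{T}(O)} \;=\; q^{2j} \prod_{v \in B - M} \bigl(1 + (d_v - 1) q^2\bigr) \;=\; \wt_{B,M}(q),$$
since $v$ has exactly one neighbour on the $v$-to-$m$ path in $T$ (contributing $1$) and $d_v - 1$ on the other side (each contributing $q^2$). Summing over $M \in \sM_j(B)$ produces $m_{B,j}(q)$, and then summing the $B$-level identity over all $B$ of size $r$ gives the second assertion. The main obstacle is step (ii): the per-arc accounting must square exactly with the definition of $\awy_B^{T}$, which charges $q^{2}$ once per bidirected arc rather than once per endpoint, and this clean split depends essentially on $T$ being a tree with $m$ well defined. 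In the boundary case $B = V$, where $m$ does not exist, the paper's remark substitutes the lexicographically minimum edge of $M$ for $m$ and the same argument goes through verbatim, as was already verified in \cite{mukesh-siva-hook}.
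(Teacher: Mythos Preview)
Your argument is correct and is precisely the standard double-counting proof: expand $\binom{i}{j}$ as the number of $j$-subsets of $\bd(O)$, swap the order of summation, and for each fixed $M\in\sM_j(B)$ factor the inner sum over the independent orientation choices of the vertices in $B-M$. The key bookkeeping step (ii), where each extra bidirected arc in $\bd(O)\setminus M$ contributes exactly one ``away'' among its two endpoints, is exactly what makes the $2|\bd(O)|$ term in $\awy_B^{T}(O)$ agree with $\sum_{v\in B-M}2\alpha(v)$, and you have stated it cleanly. The paper itself omits the proof and refers to \cite[Theorem~11]{mukesh-siva-hook}; your write-up is the expected argument in that reference, specialized here to general $B$ with $m=\min([n]\setminus B)$ as the reference vertex, and with the $B=[n]$ boundary case deferred to the earlier paper as you indicate.
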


Chan and Lam in \cite{chan-lam-binom-coeffs-char} 
showed the following non-negativity result on characters summed 
with binomial coefficients as weights.  
Let $n \geq 2$ and let $\lambda \vdash n$.  
Recall $\chi_{\lambda}^{}(j)$ is the 
character $\chi_{\lambda}^{}$ evaluated at a permutation with cycle type 
$2^j, 1^{n-2j}$.

\begin{lemma}[Chan and Lam]
  \label{lem:character_sum}
Let $\lambda \vdash n$ and let $\chi_{\lambda}^{}(j)$ be as defined above.  
Let $0 \leq i \leq \nhalf$.
Then
$\sum_{j=0}^i \chi_{\lambda}^{}(j) {i \choose j} =  \alpha_{\lambda,i}^{} 2^i,$
where $\alpha_{\lambda,i}^{} \geq 0$.  Further, if 
$\lambda = k,1^{n-k}$, then $\alpha_{\lambda,i}^{ } = \binom{n-i-1 }{k-i-1}$.
\end{lemma}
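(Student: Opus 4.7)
The plan is to reinterpret the sum $\sum_{j=0}^{i} \binom{i}{j} \chi_{\lambda}(j)$ as $\chi_{\lambda}$ applied to a specific element of the group algebra $\CC[\SSS_n]$ that turns out to be a scalar multiple of an idempotent, so that its evaluation on any irreducible representation is manifestly a nonnegative integer times a power of $2$. Since $i \le \lfloor n/2 \rfloor$, we can choose $i$ pairwise disjoint transpositions $\tau_1,\ldots,\tau_i \in \SSS_n$, say $\tau_\ell = (2\ell-1,\,2\ell)$. These commute and generate a subgroup $H \le \SSS_n$ isomorphic to $(\ZZ/2\ZZ)^i$ of order $2^i$. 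For any subset $S \subseteq \{1,\ldots,i\}$, the product $\prod_{\ell \in S}\tau_\ell$ is a permutation of cycle type $2^{|S|}\,1^{n-2|S|}$, and exactly $\binom{i}{j}$ such products have $|S|=j$. Expanding in $\CC[\SSS_n]$,
$$\prod_{\ell=1}^{i}(1+\tau_\ell)\;=\;\sum_{h\in H} h,$$
and applying $\chi_{\lambda}$ yields $\sum_{j=0}^{i}\binom{i}{j}\chi_{\lambda}(j) = \chi_{\lambda}\!\left(\sum_{h\in H}h\right)$.

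Next, I would recognize $e_H := \frac{1}{2^i}\sum_{h \in H} h$ as the averaging idempotent whose action on any $\SSS_n$-module is the projection onto the $H$-fixed subspace. Writing $V_{\lambda}$ for the Specht module indexed by $\lambda$ and $V_{\lambda}^H$ for its subspace of $H$-invariants, one has $\chi_{\lambda}(e_H) = \dim V_{\lambda}^H$, so
$$\sum_{j=0}^{i}\binom{i}{j}\chi_{\lambda}(j) \;=\; 2^{i}\,\dim V_{\lambda}^H.$$
Setting $\alpha_{\lambda,i} := \dim V_{\lambda}^H \in \ZZ_{\ge 0}$ then proves the first assertion of the lemma.

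For the hook $\lambda=(k,1^{n-k})$, I would invoke the classical isomorphism $V_{\lambda} \cong \Lambda^{n-k}(V_{\mathrm{std}})$, where $V_{\mathrm{std}}$ is the $(n-1)$-dimensional standard representation realized as the sum-zero subspace of $\CC^n$. With respect to the basis $e_1,\ldots,e_n$ of $\CC^n$, each edge $\{2\ell-1,2\ell\}$ of our matching contributes to $\CC^n$ a trivial $H$-summand (spanned by $e_{2\ell-1}+e_{2\ell}$) together with a one-dimensional character $\varepsilon_\ell$ on which $\tau_\ell$ acts as $-1$ and every other $\tau_m$ acts as $+1$ (spanned by $e_{2\ell-1}-e_{2\ell}$); the $n-2i$ $H$-fixed points contribute $n-2i$ additional trivial summands. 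After removing one trivial copy for the sum-zero condition,
$$V_{\mathrm{std}} \;\cong\; \mathbf{1}^{\,n-i-1} \;\oplus\; \bigoplus_{\ell=1}^{i} \varepsilon_\ell \qquad\text{as $H$-modules.}$$
A wedge of $n-k$ distinct summands in $\Lambda^{n-k}(V_{\mathrm{std}})$ transforms under $H$ by the product of the corresponding characters; since the $\varepsilon_\ell$'s are pairwise distinct nontrivial characters of $H$, such a wedge is $H$-fixed if and only if it uses only trivial summands. The number of these is $\binom{n-i-1}{n-k} = \binom{n-i-1}{k-i-1}$, giving the stated value of $\alpha_{\lambda,i}$.

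The main obstacle is not the positivity part, which is essentially a one-line observation once one spots the telescoping identity $\prod_{\ell}(1+\tau_\ell) = 2^i e_H$; rather, it is the explicit hook evaluation, which relies on choosing the right model $\Lambda^{n-k} V_{\mathrm{std}}$ of the Specht module and tracking exactly which wedge-basis vectors survive after projection onto $H$-invariants.
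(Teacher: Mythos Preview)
Your proof is correct. Note, however, that the paper itself does not prove this lemma: it is quoted as a result of Chan and Lam \cite{chan-lam-binom-coeffs-char} and used as a black box, so there is no ``paper's own proof'' to compare against. Your argument---rewriting $\sum_{j=0}^{i}\binom{i}{j}\chi_{\lambda}(j)$ as $\chi_{\lambda}\big(\prod_{\ell}(1+\tau_\ell)\big) = 2^{i}\dim V_{\lambda}^{H}$ for $H=\langle\tau_1,\ldots,\tau_i\rangle\cong(\ZZ/2\ZZ)^{i}$, and then for hooks computing $\dim(\Lambda^{n-k}V_{\mathrm{std}})^{H}$ via the explicit $H$-decomposition $V_{\mathrm{std}}\cong\mathbf{1}^{\,n-i-1}\oplus\bigoplus_{\ell}\varepsilon_\ell$---is a clean self-contained proof and in fact yields the slightly stronger conclusion $\alpha_{\lambda,i}\in\ZZ_{\ge 0}$. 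The only small point worth making explicit is that $\dim V_{\lambda}^{H}$ is independent of the particular choice of $i$ disjoint transpositions, which follows since any two such subgroups are conjugate in $\SSS_n$.
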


Combining Lemmas 
\ref{lem:reln_aiq_mjq} 
and 
\ref{lem:character_sum} 
with
Lemma \ref{lem:coeff_in_m_rj}
gives us the following Corollary whose
proof we omit.
This gives an interpretation of the coefficient $c_{\lambda,r}^{\sL_T^q}(q)$ 
in the immanantal polynomial 
as a functions of the $a_{r,i}^T(q)$'s.   Since all the $a_{r,i}^T(q)$'s except
$a_{[n],0}^T(q)$ have positive coefficients, this is an almost positive expression.

\begin{corollary}
\label{cor:coeff_interpret} 
For $0 \leq r \leq n$, the coefficient of the immanantal polynomial of 
$\sL_T^q$  in absolute value is given by
\begin{equation*}
  c_{\lambda,r}^{\sL_T^q}(q) = \sum\limits_{i=0}^{\rhalf} 
  \alpha_{\lambda,i}^{ } 2^i a_{r,i}^T(q),  \
  \mbox{ where } \alpha_{\lambda,i}^{ } \geq 0, \>\> \forall \ \lambda\vdash n, i.
\end{equation*} 
\end{corollary}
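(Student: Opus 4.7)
The plan is to chain together the three preceding lemmas by straightforward substitution and a change in the order of summation. Starting from Lemma \ref{lem:coeff_in_m_rj}, I would write
$$c_{\lambda,r}^{\sL_T^q}(q) = \sum_{j=0}^{\rhalf} \chi_{\lambda}^{}(j)\, m_{r,j}^{}(q),$$
and then immediately substitute the identity $m_{r,j}^{}(q) = \sum_{i=j}^{\rhalf} \binom{i}{j} a_{r,i}^T(q)$ from Lemma \ref{lem:reln_aiq_mjq}.

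Next, I would interchange the order of the resulting double sum over $i$ and $j$. Since the condition $j \leq i \leq \rhalf$ is symmetric in how it can be iterated, this yields
$$c_{\lambda,r}^{\sL_T^q}(q) = \sum_{i=0}^{\rhalf} a_{r,i}^T(q) \left( \sum_{j=0}^{i} \chi_{\lambda}^{}(j) \binom{i}{j} \right).$$
Now I would invoke Lemma \ref{lem:character_sum}, which identifies the inner character sum as $\alpha_{\lambda,i}^{} 2^i$ with $\alpha_{\lambda,i}^{} \geq 0$, giving exactly the claimed formula.

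There is essentially no obstacle here beyond bookkeeping: the three lemmas have been set up precisely so that they compose. The only point that requires a moment of care is the swap of summations, which uses that the terms $\chi_{\lambda}^{}(j)\binom{i}{j} a_{r,i}^T(q)$ are indexed over the triangular region $0 \leq j \leq i \leq \rhalf$, and the non-negativity of $\alpha_{\lambda,i}^{}$ is delivered directly by Lemma \ref{lem:character_sum}. Accordingly, as the excerpt notes, the proof can be omitted with confidence that the reader can fill it in.
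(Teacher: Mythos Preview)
Your proposal is correct and follows exactly the approach the paper intends: the paper explicitly states that the corollary follows by combining Lemmas \ref{lem:coeff_in_m_rj}, \ref{lem:reln_aiq_mjq}, and \ref{lem:character_sum}, and omits the proof. Your substitution and interchange of summations is precisely the intended computation.
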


Combining \eqref{eq:coeff_c_in_B}, Lemmas \ref{lem:character_sum} and \ref{lem:reln_aiq_mjq} 
gives us another corollary when the 
partition is $\lambda = 1^n$, which we
again merely state.

\begin{corollary}
\label{cor:smaller_dets} 
When $\lambda = 1^n$, we have 
$\alpha_{\lambda,i}^{} = \begin{cases} 
		1 & \mbox{ if } i = 0\\
		0 & \mbox{ otherwise }
  	\end{cases}
$.  Further, let $B \subseteq [n]$ with $|B| = r$.   Then, 
\begin{equation*}
 \det(\sL_T^q[B|B])  =  a_{B,0}^T(q). \mbox{ Moreover, }  c_{1^n,r}^{\sL_T^q}(q) = a_{r,0}^T(q).
\end{equation*} 
\end{corollary}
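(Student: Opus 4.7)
The plan is to verify the three assertions in turn, using the alternating-sign character for $\lambda=1^n$ and the identities already established earlier in the paper.

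First I would compute $\chi_{1^n}(j)$: since $\chi_{1^n}$ is the sign character, a permutation of cycle type $2^j, 1^{n-2j}$ has sign $(-1)^j$, so $\chi_{1^n}(j) = (-1)^j$. Plugging this into Lemma \ref{lem:character_sum} gives
$$\alpha_{1^n,i}\, 2^i \;=\; \sum_{j=0}^{i} (-1)^j \binom{i}{j} \;=\; (1-1)^i,$$
which is $1$ when $i=0$ and $0$ otherwise. This establishes the stated formula for $\alpha_{1^n,i}$. The final "moreover" assertion $c_{1^n,r}^{\sL_T^q}(q) = a_{r,0}^T(q)$ then falls out of Corollary \ref{cor:coeff_interpret} immediately, since every term with $i \geq 1$ vanishes.

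For the determinantal identity I would revisit the inner computation carried out in the proof of Lemma \ref{lem:coeff_in_m_rj}. There it was shown that
$$c_{\lambda,B}^{\sL_T^q}(q) \;=\; d_{\lambda}\!\begin{pmatrix} \sL_T^q[B|B] & 0 \\ 0 & I \end{pmatrix} \;=\; \sum_{j=0}^{\rhalf} \chi_{\lambda}(j)\, m_{B,j}(q).$$
Specializing $\lambda = 1^n$, the immanant reduces to a determinant, and the block-diagonal structure with the $(n-r)\times(n-r)$ identity block yields $c_{1^n,B}^{\sL_T^q}(q) = \det(\sL_T^q[B|B])$. On the right-hand side, substituting $\chi_{1^n}(j) = (-1)^j$ and the expansion from Lemma \ref{lem:reln_aiq_mjq} gives
$$\sum_{j=0}^{\rhalf} (-1)^j \sum_{i=j}^{\rhalf} \binom{i}{j} a_{B,i}^T(q) \;=\; \sum_{i=0}^{\rhalf} a_{B,i}^T(q) \sum_{j=0}^{i} (-1)^j \binom{i}{j} \;=\; a_{B,0}^T(q),$$
after swapping the order of summation and applying the same binomial identity $\sum_{j=0}^i (-1)^j \binom{i}{j} = \delta_{i,0}$.

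There is no real obstacle here; the corollary is essentially a direct consequence of the sign-character identity $\sum_{j=0}^i (-1)^j \binom{i}{j} = \delta_{i,0}$. The only minor point to verify carefully is the block-diagonal reduction $d_{1^n}\!\bigl(\sL_T^q[B|B] \oplus I\bigr) = \det(\sL_T^q[B|B])$, which is immediate from the multiplicativity of the determinant on block triangular matrices but would otherwise fail for general $\lambda$ (as emphasized earlier in the introduction when motivating the combinatorial approach via $B$-matchings).
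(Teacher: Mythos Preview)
Your proof is correct and follows exactly the route the paper indicates: it combines \eqref{eq:coeff_c_in_B}, Lemma~\ref{lem:reln_aiq_mjq}, and Lemma~\ref{lem:character_sum} (with the sign character $\chi_{1^n}(j)=(-1)^j$ and the binomial identity $\sum_{j=0}^i(-1)^j\binom{i}{j}=\delta_{i,0}$). The paper omits all of these details and merely states the corollary, so your write-up is a faithful expansion of the intended argument.
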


\begin{remark}
Let tree $T$ have vertex set $[n]$ and let $B\subseteq [n]$ with $|B|=n-1$. 
Then, for all $q\in \RR$, $a_{B,0}^T(q)=1$. 
This implies that $a_{n-1,0}^T(q)=n$. 
\end{remark}

Let $B \subseteq [n]$ with $|B| = r$.  Let $\sL_T^q[B|B]$ denote the 
$r \times r$ 
submatrix of $\sL_T^q$ induced on the rows and columns indexed by $B$.
From Corollary \ref{cor:smaller_dets}, we get $\det(\sL_T^q[B|B]) 
\geq 0$ when $B \neq [n]$.
When $B = [n]$, Bapat, Lal and Pati \cite{bapat-lal-pati} have shown 
that $\det(\sL_T^q) = 1-q^2$.
As remarked in Section \ref{sec:intro}, when $q \in \RR$ with $|q| \leq 1$, 
the matrix $\sL_T^q$ is positive semidefinite.

\begin{remark}
\label{rem:sturm_thm}
By Sturm's Theorem (see \cite{godunov-modern-linear-algebra}), the number of 
negative eigenvalues of $\sL_T^q$ equals the number of sign changes among
the leading principal minors.   When $|q| > 1$,  the number of sign
changes equals 1 by Corollary \ref{cor:smaller_dets}.
This gives a short proof of a result of Bapat, Lal and Pati 
\cite[Proposition 3.7]{bapat-lal-pati} that the signature of $\sL_T^q$ is 
$(n-1,1,0)$ when $|q| > 1$, where signature of a Hermitian matrix
$A$ is the vector $(p,n,z)$ with $p,n$ being the number of 
positive, negative eigenvalues of $A$  respectively
and $z$ being the nullity of $A$.
\end{remark}

\begin{remark}
By \eqref{eq:def_a_B,i},
for any $T$, all $a_{B,i}^T(q) \in \RR^+[q]$
when $B \not= [n]$. In \cite[Corollary 13]{mukesh-siva-hook}, 
it was shown that $a_{[n],i}^T(q) \in \RR^+[q]$ 
when $i > 0$.  By definition, 
$a_{[n],0}^T(q) = 1-q^2$ has negative coefficients.  
In \cite[Theorem 2.4]{mukesh-siva-hook}, it was shown
that $c_{\lambda,n}^{\sL_T^q} \in \RR^+[q]$ for all
$\lambda \vdash n$ except $\lambda = 1^n$.

By these and  Corollary \ref{cor:coeff_interpret}, it is easy to see 
that 
barring $c_{1^n,n}^{\sL_T^q}(q)$, which equals $1-q^2$,
$c_{\lambda,r}^{\sL_T^q}(q) \in \RR^+[q]$
for all $\lambda \vdash n$ and for all $0\leq r \leq n$.
Thus all statements in this work can be made about 
$c_{\lambda,r}^{\sL_T^q}(q)$ or alternatively about    
the {\sl absolute value} of the coefficient of $x^{n-r}$ 
in $f_{\lambda}^{\sL_T^q}(x)$
(which equals $(-1)^r c_{\lambda,r}^{\sL_T^q}(q)$). 
\end{remark}

\section{Proof of Theorem \ref{thm:main} }
\label{sec:coeff_imman_poly}

We begin with a few preliminaries towards proving Theorem \ref{thm:main}.
Let $T_1$ and $T_2$ be trees on $n$ vertices with $T_2 \geq_{\GTS_n} T_1$.  We 
assume that both $T_1$ and $T_2$ have vertex set  $V=[n]$.

\begin{remark}
  \label{rem:label}
Since immanants are invariant under a relabelling of vertices 
(see Littlewood's book \cite{littlewood-book} or Merris
\cite{merris-immanantal_invariants}),
without loss of generality, we label the vertices of $T_1$ as follows:
first label the vertices on the path $P_k$ as $1,2,\ldots, k$ in order with 
$1$ being the closest vertex to $X$ and $k$ being the closest vertex 
to $Y$.   Then, 
label vertices in $X$ with labels $k+1, k+2, \ldots, k+|X|$ in increasing order 
of distance from vertex 1 (say in a breadth-first manner starting from vertex 1) 
and lastly, label vertices of $Y$ from $n-|Y|+1$ to $n$ again in increasing order
of distance from vertex 1.  See Figure 
\ref{fig:gts_label_example} for an example.
\end{remark}

\begin{figure}[h]
\centerline{\includegraphics[scale=0.55]{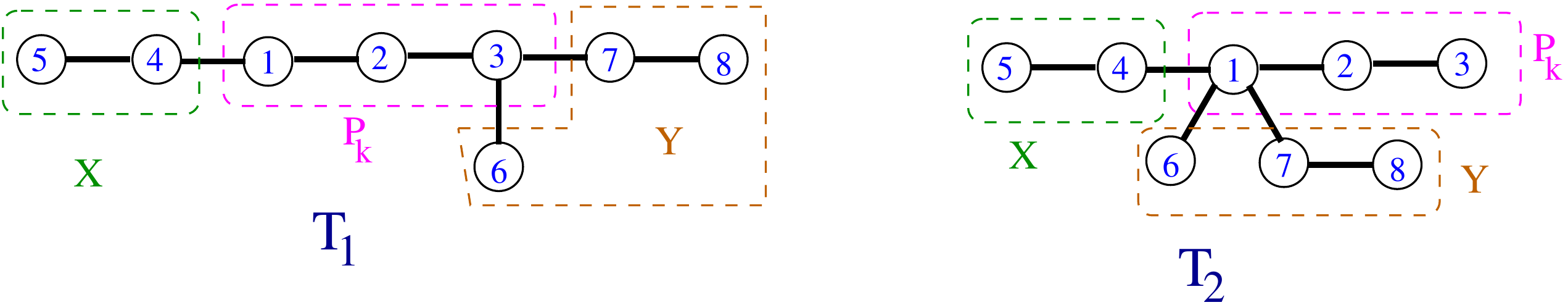}}
\caption{Two labelled trees with $T_2 \geq_{\GTS_n} T_1$ and $T_2$ covering $T_1$.}
\label{fig:gts_label_example}
\end{figure}

Recall our notation 
$a_{B,i}^{T_1}(q)$ and $a_{B,i}^{T_2}(q)$ for the trees $T_1$ and $T_2$ respectively.
Also recall $\sO_{B,i}^{T_1}$ denotes the set 
of $B$-orientations in $T_1$ with
$i$ bidirected-arcs and let $\sO_{r,i}^{T_1} = \cup_{B \subseteq V, |B|=r}^{ } \sO_{B,i}^{T_1}$. 
Recall that $\sO_{r,i}^{T_2}$ is defined analogously. 
It would have been nice if for all $B \subseteq V$ with $|B| = r$ and for 
all $0 \leq i \leq \rhalf$, we could prove that $a_{B,i}^{T_1}(q) - a_{B,i}^{T_2}(q)
\in \RR^+[q^2]$.  Unfortunately, this is not true as the example below
illustrates.

\begin{example}
  Let $T_2$ and $T_1$ be the trees given in Figure 
\ref{fig:gts_label_example}.  Let
  $B = \{1,4,6,7,8\}$ and let $i=2$.  It can be checked that 
  $a_{B,i}^{T_2}(q) = 2q^4+q^6$ and that $a_{B,i}^{T_1}(q) = q^4$.
\end{example}

Nonetheless, by combining all sets $B$ of size $r$, we will for all $r$, $i$ 
construct an injective map $\gamma : \sO_{r,i}^{T_2} \rightarrow \sO_{r,i}^{T_1}$ 
that preserves the ``away'' statistic.  
For each $r$, note that there are $\binom{n}{r}$ sets $B$ that 
contribute to $\sO_{r,i}^{T_2}$ and $\sO_{r,i}^{T_1}$.
We partition the $r$-sized subsets $B$ into three disjoint 
families and apply three separate lemmas.
Recall that vertices $1$ and $k$ are the endpoints of the path $P_k$ used
in the definition of the poset $\GTS_n$.
The first family consists of those sets $B$ with both
$1,k \not\in B$. 
\begin{lemma}
\label{lem:1_and_k_not_in_O}
Let $B \subseteq [n]$, $|B| = r$ be such that both $1,k \not \in B$.  
Then, there is an injective map $\phi: \sO_{B,i}^{T_2} \rightarrow 
\sO_{B,i}^{T_1}$ such that $\awy_B^{T_2}(O) = \awy_B^{T_1}(\phi(O))$.  
Thus, for all $0 \leq i \leq \rhalf$, 
we have $a_{B,i}^{T_1}(q) - a_{B,i}^{T_2}(q) \in \RR^+[q^2]$.
\end{lemma}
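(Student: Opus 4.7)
The plan is to construct a bijection $\phi: \sO_{B,i}^{T_2} \to \sO_{B,i}^{T_1}$ that preserves the statistic $\awy_B^{}(\cdot)$ vertex by vertex. Such a $\phi$ is in particular an injection, and it will in fact yield $a_{B,i}^{T_1}(q) = a_{B,i}^{T_2}(q)$, so the difference lies trivially in $\RR^+[q^2]$. The first observation is that, since $1 \notin B$, we have $m := \min([n]-B) = 1$ in both trees, so the reference vertex used to define $\aw(\cdot)$ is the same in $T_1$ and $T_2$.

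\vsp

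To define $\phi$, I would decompose $B = B_p \sqcup B_X \sqcup B_Y$, where $B_p = B \cap \{2, \ldots, k-1\}$ consists of the path-interior vertices in $B$, and $B_X$ (resp.\ $B_Y$) consists of those vertices of $B$ lying in the subtrees of $T_1$ rooted at the neighbors of $1$ inside $X$ (resp.\ at the neighbors of $k$ inside $Y$). Passing from $T_1$ to $T_2$ only reattaches each immediate neighbor $v$ of $k$ lying in $Y$ (call such a $v$ a $Y$-root) from $k$ to $1$; every other edge of $T_1$ is an edge of $T_2$. For $O \in \sO_{B,i}^{T_2}$, define $\phi(O)$ to coincide with $O$ at every vertex of $B_p \cup B_X$ and at every non-root vertex in $B_Y$; at a $Y$-root $v \in B_Y$, replace the choice $v \to 1$ in $O$ by $v \to k$ in $\phi(O)$, and otherwise keep $v$'s orientation (which must then point into the $Y$-subtree below $v$). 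This rule is clearly reversible, so $\phi$ is a bijection.

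\vsp

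It remains to verify that $\phi$ preserves both the number of bidirected arcs and the away statistic. For bidirected arcs, both endpoints of such an arc lie in $B$, and the only edges that differ between $T_1$ and $T_2$ are $\{v, k\}$ in $T_1$ and $\{v, 1\}$ in $T_2$ for $Y$-roots $v$; since $1, k \notin B$, neither kind of edge can ever be bidirected, so $\bd(O)$ and $\bd(\phi(O))$ agree as sets of edges, hence $\phi(O) \in \sO_{B,i}^{T_1}$ and $\fr(O) = \fr(\phi(O))$. For $\aw$, the check is vertex by vertex: for $v \in B_p$ the neighbors $v-1$ and $v+1$ carry the same ``towards $1$'' labels in both trees; for $v \in B_X$ or a non-root vertex of $B_Y$, the path from $v$ out of its subtree is identical in $T_1$ and $T_2$, so the towards-$m$ direction at $v$ is preserved; and at a $Y$-root $v$, the orientations $v \to 1$ in $T_2$ and $v \to k$ in $T_1$ both point towards $m = 1$, while any orientation into the $Y$-subtree below $v$ points away from $m$ in both trees. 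The only mildly delicate step is this last case at $Y$-roots, since it is the one vertex whose neighbor set actually changes; once it is checked, $\awy_B^{T_2}(O) = \awy_B^{T_1}(\phi(O))$ and the lemma follows.
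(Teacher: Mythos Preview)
Your proof is correct and follows essentially the same approach as the paper: both define $\phi$ as the ``identity'' map on $B$-orientations, using $m=1$ in both trees since $1\notin B$. Your version is more careful than the paper's---the paper simply says ``assign the same orientation as in $O$'' without addressing the $Y$-root vertices whose neighbor sets differ between $T_1$ and $T_2$, whereas you make the swap $v\to 1 \leftrightarrow v\to k$ explicit and verify the away statistic there; you also note that $\phi$ is in fact a bijection (so $a_{B,i}^{T_1}(q)=a_{B,i}^{T_2}(q)$), which is stronger than the injection the paper states.
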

\begin{proof}
  Let $O \in \sO_{B,i}^{T_2}$.  Clearly, $1= \min_{u \in [n]-B}^{ } u$ and
for $O$, define $O' = \phi(O)$ as follows.  In $O'$, for each vertex
$v \in B$, assign the same orientation as in $O$.  Clearly, 
$O' \in \sO_{B,i}^{T_1}$  and it is clear that $\phi$ is  
an injective map from $\sO_{B,i}^{T_2}$ to $\sO_{B,i}^{T_1}$.
Further, it is easy to see that $\awy_B^{T_2}(O) = \awy_B^{T_1}(\phi(O))$, hence 
proving that $a_{B,i}^{T_1}(q) - a_{B,i}^{T_2}(q) \in \RR^+[q^2]$, completing the proof.
\end{proof}

\vspace{2 mm}

We next consider those $B$ with $|\{1,k\} \cap B| = 1$.  We use the 
notation $B$ for $r$-sized subsets with $1 \in B, k \not\in B$ and 
$B'$ for $r$-sized subsets with $k \in B', 1 \not\in B'$.  
The next lemma below considers such subsets $B'$ and those 
$B$-orientations $O$ with $O(1) \in X \cup P_k$.  Note that
for such $B$-orientations $O$, $\min_{v \in [n]-B}^{ }v \in P_k$.

\begin{lemma}
  \label{B'_and_B_easy_case}
Let $O \in \sO_{B,i}^{T_2}$, where $1 \in B, k \not\in B$ and let $O(1)$ 
denote the oriented neighbour 
of vertex 1 in $O$.  If $O(1) \in X \cup P_k$, then there exists 
an injective map $\mu: \sO_{B,i}^{T_2} \rightarrow \sO_{B,i}^{T_1}$ such
that $\awy_B^{T_2}(O) = \awy_B^{T_1}(\mu(O))$.  
Similarly, let $B' \subseteq V$ be such that $1 \not\in B', k \in B'$.   Then, 
there is an injective map
$\nu: \sO_{B',i}^{T_2} \rightarrow \sO_{B',i}^{T_1}$ such that for
$P \in \sO_{B',i}^{T_2}$, $\awy_{B'}^{T_2}(P) = \awy_{B'}^{T_1}(\nu(P))$.
\end{lemma}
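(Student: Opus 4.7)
The plan is to build both maps as local modifications of the identity, exploiting the fact that the neighbourhoods of $T_1$ and $T_2$ differ only at vertex $1$, at vertex $k$, and at the subset $Y_1\subseteq Y$ of those $Y$-vertices that were neighbours of $k$ in $T_1$ (and are hence neighbours of $1$ in $T_2$ after the GTS move); for every other $v$ the neighbour set is identical in both trees, so any orientation of $v$ in $T_2$ is simultaneously an orientation of $v$ in $T_1$. For Part~1, I would set $O' = \mu(O)$ by $O'(v)=O(v)$ for every $v\in B$, except that if $v\in Y_1\cap B$ has $O(v)=1$ then put $O'(v)=k$; the hypothesis $O(1)\in X\cup P_k$ guarantees that $O'(1)$ is a legitimate neighbour of $1$ in $T_1$. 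For Part~2, vertex $k$ is a leaf in $T_2$, so $O(k)=k-1$ is forced; define $O'=\nu(O)$ by fixing $O'(k)=k-1$, applying the same $Y_1$-swap, and copying every other orientation.

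Next I would verify that the bidirection count is preserved. The only edges present in exactly one of the two trees are $(1,y')$ (only in $T_2$) and $(k,y')$ (only in $T_1$) for $y'\in Y_1$. A bidirected arc on $(1,y')$ in $T_2$ requires $O(1)=y'$, which is forbidden by the Part~1 hypothesis and impossible in Part~2 since $1\notin B'$. A bidirected arc on $(k,y')$ in $T_1$ requires $k\in B$ (false in Part~1) or $O'(k)=y'$ (contradicting $O'(k)=k-1$ in Part~2). Hence every bidirected arc of $O$ and of $O'$ lies on an edge common to both trees, and the count $i$ is preserved. For the away statistic, $m=\min([n]-B)$ lies in $\{2,\ldots,k\}\subset P_k$ in Part~1 and $m=1$ in Part~2. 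The only vertices whose path to $m$ differs between $T_1$ and $T_2$ lie in $Y_1$ or in the subtrees hanging below $Y_1$: for $v\in Y_1\cap B$ the path to $m$ starts with $v\to 1$ in $T_2$ and with $v\to k$ in $T_1$, which is exactly the swap used by $\mu$ and $\nu$, and for $v$ below $Y_1$ the first edge on the path to $m$ is identical in both trees; hence ``towards $m$'' is preserved vertex-by-vertex, yielding $\awy_B^{T_2}(O)=\awy_B^{T_1}(\mu(O))$ and analogously for $\nu$.

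Injectivity follows because the rewriting rule is locally reversible on the image: any $Y_1$-vertex oriented to $k$ in $\mu(O)$ or $\nu(O)$ came from the unique preimage in which it pointed to $1$, and all other orientations were copied identically. The main obstacle I anticipate is the case-by-case audit of bidirections when the vertex neighbourhoods change; the hypothesis $O(1)\in X\cup P_k$ in Part~1 is the surgical condition that rules out a $T_2$-bidirection on a $(1,y')$ edge and thus keeps $i$ constant. Orientations with $O(1)\in Y$ fall outside this lemma and will presumably be disposed of in a subsequent, more delicate rerouting argument.
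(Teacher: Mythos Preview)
Your argument is correct and follows the same idea as the paper's proof: both build $\mu$ and $\nu$ as the ``identity'' copy of each vertex orientation from $T_2$ to $T_1$. The paper simply writes ``assign all vertices of $B$ the same orientation as in $O$'' and declares the result clear; you are more careful in that you make explicit the one place where a literal copy is ill-defined, namely for $v\in Y_1\cap B$ with $O(v)=1$, and you perform the natural swap $O'(v)=k$. Your case check that this swap neither creates nor destroys bidirected arcs (using $O(1)\in X\cup P_k$ in Part~1 and $O'(k)=k-1$ in Part~2) and preserves ``towards/away from $m$'' vertex-by-vertex is exactly the verification the paper suppresses, so the two proofs are essentially identical with yours being the more detailed version.
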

\begin{proof}
The proof for both cases are similar.
Let $O \in \sO_{B,i}^{T_2}$ and let $O(1) \in X \cup P_k$.
In this case, the same injection of Lemma \ref{lem:1_and_k_not_in_O}
works.  That is, we form $O'$ by assigning all vertices of $B$ the 
same orientation as in $O$.  Clearly, $O' \in \sO_{B,i}^{T_1}$  and
$\awy_B^{T_2}(O) = \awy_B^{T_1}(O')$.

Similarly, let $P \in \sO_{B',i}^{T_2}$.  Form 
$P' \in \sO_{B',i}^{T_1}$ 
by assigning all vertices of $B'$ the same orientation as in $P$.
Clearly, 
$\awy_{B'}^{T_2}(P) = \awy_{B'}^{T_1}(P')$.  Note that in both $P$ and $P'$,
the orientation of $k$ equals $k-1$ as $k$ is a leaf vertex in $T_2$.  The proof is complete.
\end{proof}

\vspace{2 mm}

We continue to use the notation $B$ for an $r$-sized subset of $V$ with
$1 \in B$.  We now handle $B$-orientations $O \in \sO_{B,i}^{T_2}$ with $O(1) \in Y$.

\begin{lemma}
\label{lem:mapping_B_to_B'}
Let $B$ be an $r$-sized subset of $[n]$ with $1 \in B$, $k \not \in B$.  Define
$B' = (B - \{1\}) \cup \{k\}$.  Let $O \in \sO_{B,i}^{T_2}$
with $O(1) \in Y$.  There is an injective map $\delta: \sO_{B,i}^{T_2} 
\rightarrow \sO_{B',i}^{T_1}$ such that 
$\awy_B^{T_2}(O) = \awy_{B'}^{T_1}(\delta(O))$.  Futher, if
$N = \delta(O)$, then we have $N(k) = O(1)$.
\end{lemma}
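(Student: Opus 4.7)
The plan is to construct $\delta$ explicitly and then verify preservation of the bidirected count and the away statistic by a region-by-region case analysis. Given $O \in \sO_{B,i}^{T_2}$ with $O(1) = y_0 \in Y$, let $m = \min_{u\not\in B} u$, noting $2 \leq m \leq k$ since $1 \in B$ and $k \notin B$. I will define $N = \delta(O)$ on $B' = (B - \{1\}) \cup \{k\}$ as follows: (i) set $N(k) = y_0$; (ii) for each $v \in Y \cap B$, set $N(v) = k$ if $O(v) = 1$ and $N(v) = O(v)$ otherwise; (iii) for each path vertex $v \in \{2,\ldots,m-1\}$ (all of which lie in $B$ by the choice of $m$), write $\sigma(v) = L$ or $R$ to encode $O(v) = v-1$ or $v+1$, and set $N(v)$ to encode the letter $\overline{\sigma(m+1-v)}$, which is the ``reverse-complement'' of the segment string; (iv) for every other $v \in B - \{1\}$, set $N(v) = O(v)$, which is valid since such vertices have identical neighbourhoods in $T_1$ and $T_2$. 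The property $N(k) = O(1)$ is immediate from step (i), and $\delta$ is invertible step-by-step, hence injective.

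To see that $N \in \sO_{B',i}^{T_1}$, I will match bidirected arcs of $O$ and $N$ by grouping them. An arc $(1, y_j)$ is bidirected in $O$ iff $y_j = y_0$ and $O(y_0) = 1$, which corresponds under step (ii) to $(k, y_0)$ being bidirected in $N$. Bidirected arcs on the segment $\{2,\ldots,m-1\}$ correspond to $RL$-patterns in the string $s = (\sigma(2), \ldots, \sigma(m-1))$, and a direct check confirms that the reverse-complement operation $s \mapsto s' = \mathrm{rev}(\overline{s})$ preserves the count of $RL$-patterns (as well as the count of $L$'s). Arcs elsewhere on the path, inside $X$, or deep inside $Y$-subtrees are simply copied over and so match. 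Boundary arcs such as $(m-1,m)$ and $(m, m+1)$ are never bidirected since $m \notin B \cup B'$, and when $m = k$ the arc $(k-1,k)$ is not bidirected in $N$ because $N(k) = y_0 \neq k-1$.

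Preservation of $\awy_B^{T_2}(O) = \awy_{B'}^{T_1}(N)$ is then checked region by region. Vertex $1$ in $O$ and vertex $k$ in $N$ both point to $y_0$, which lies off the path to their respective reference vertices ($m$ in $T_2$ and $1$ in $T_1$), so they contribute identically (either both free with $\aw = 1$, or both inside a bidirected arc with $y_0$). For $v \in X \cap B$ and $v \in Y \cap B$, the path from $v$ to the reference vertex exits through the same incident edge in both trees, so the on-path/off-path status of $O(v)$ and $N(v)$ coincides, giving equal $\aw$. For path vertices $v > m$, both ``away'' criteria reduce to the orientation $v+1$, so copying preserves $\aw$. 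The key nontrivial computation is in the segment $\{2,\ldots,m-1\}$: the total contribution to $\awy$ from this segment in $O$ equals $2\#RL(s) + 2(\#L(s) - \#RL(s)) = 2\#L(s)$, since each bidirected arc in the segment contains exactly one $L$-vertex; the $T_1$-side contribution analogously simplifies to $2\#R(s')$, and these are equal because $\#L(s) = \#R(s')$ under reverse-complement.

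The main obstacle is precisely this segment analysis: a naive ``copy all orientations'' preserves the bidirected count but fails to preserve $\aw$, because for $v < m$ the ``away from $m$'' direction in $T_2$ is opposite to the ``away from $1$'' direction in $T_1$; conversely, naively complementing each orientation on the segment fixes $\aw$ pointwise but turns every ``diverging'' pair into a bidirected arc and every bidirected arc into a diverging pair, altering $i$. The reverse-complement $s \mapsto \mathrm{rev}(\overline{s})$ is the correct involution because it simultaneously preserves $\#L$ and the number of $RL$-patterns, which is exactly the pair of invariants that controls both $\aw$ and the bidirected count on the segment.
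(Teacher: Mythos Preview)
Your proof is correct. The overall strategy matches the paper's: handle the path segment $\{1,\ldots,m-1\}$ specially and copy orientations verbatim elsewhere (with the obvious relabelling $1 \leftrightarrow k$ for $Y$-roots). The paper, however, realises the segment bijection differently: it refers to \cite[Lemma 7]{mukesh-siva-hook} and describes breaking the $(1,m)$-subpath into pieces separated by the bidirected arcs, then within each piece sending the $\ell$-th vertex from $m$ (with its towards/away label) to the $\ell$-th vertex from $1$. That construction keeps bidirected arcs essentially in place and permutes only within the intervening pieces. Your construction instead applies a single global reverse--complement $s \mapsto \mathrm{rev}(\overline{s})$ to the $L/R$-string on $\{2,\ldots,m-1\}$, which in general moves the bidirected arcs but still preserves both $\#RL$ and the ``away'' contribution. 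The two maps $\delta$ need not coincide pointwise, but both are valid injections preserving $i$ and $\awy$. Your formulation has the advantage of being fully self-contained and yielding the clean bookkeeping identity $2\#RL(s) + 2(\#L(s)-\#RL(s)) = 2\#L(s) = 2\#R(s')$; the paper's piecewise version has the advantage of making the preservation of bidirected-arc count immediate by construction, at the cost of deferring details to the cited earlier paper.
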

\begin{proof}
The proof is identical to the proof of \cite[Lemma 7]{mukesh-siva-hook}.  We hence
only sketch our proof.  In $T_1$, define
$m' = \min_{v \in [n] - B'}v$ and recall that
$m = \min_{v \in [n] - B}v$ in $T_2$.
Since $1 \not \in B'$, note that in $T_1$, we have $m'=1$.  
Thus, we
reverse the orientation of some vertices in $T_2$ on the subpath 
from $(1,m)$ of $P_k$.  To 
decide the vertices whose orientations are to be reversed, we break the $(1,m)$ path
into segments separated by bidirected arcs. 
In each segment, 
if the $\ell$-th closest vertex to $m$ in $T_2$ was oriented ``towards $m$'', then 
in $T_1$, orient the $\ell$-th closest vertex to $1$ ``towards 1''.  Likewise, 
if the $\ell$-th closest vertex to $m$ in $T_2$ was oriented ``away from $m$'', 
then in $T_1$, orient the $\ell$-th closest vertex to $1$ ``away from 1''.

See Figure \ref{fig:case0} for an example, where the letter
``t'' is used to denote a vertex whose orientation is towards $m$ and
``a'' is used to denote a vertex whose orientation is away from $m$.  
This convention of ``t'' and ``a'' will be used in later figures as well.
For the example in the Figure \ref{fig:case0}, note that $k = 9$.
If $\delta$ is the map described above,
then it is clear that $\awy_B^{T_2}(O) = \awy_{B'}^{T_1}(\delta(O))$ and 
that $(\delta(O))(k) = O(1)$.
The proof is complete.
\end{proof}

\begin{figure}[h]
\centerline{\includegraphics[scale=0.5]{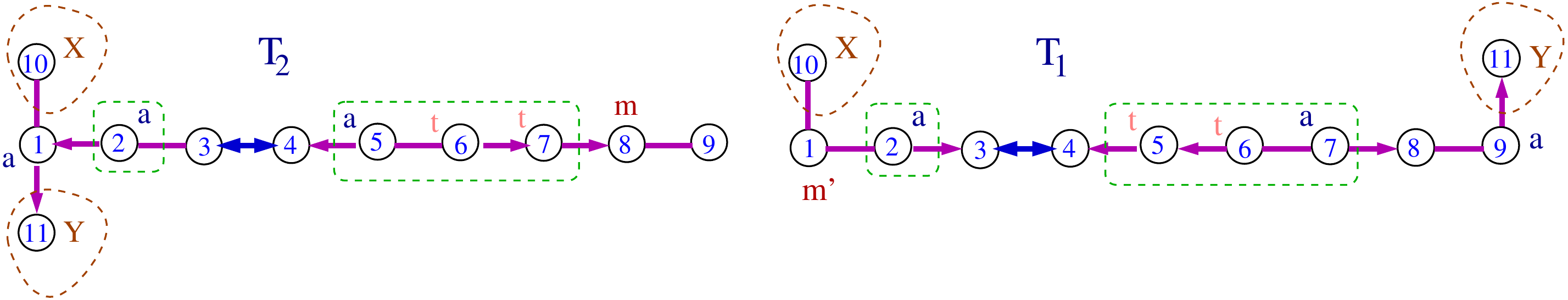}}
\caption{Illustrating the injection when $O(1) \in Y$, $m \in P_k$.}
\label{fig:case0} 
\end{figure}

\begin{corollary}
  \label{cor:mix_1_and_k}
Let $B \subseteq V$ with $1 \in B, k \not\in B$ and define 
$B' = (B - \{1\}) \cup \{k\}$.  For all
$i$, there is an injection $\omega: \sO_{B,i}^{T_2} \cup \sO_{B',i}^{T_2} \rightarrow
\sO_{B,i}^{T_1} \cup \sO_{B',i}^{T_1}$.
Thus, $a_{B,i}^{T_1}(q) + a_{B',i}^{T_1}(q) - a_{B,i}^{T_2}(q) -
a_{B',i}^{T_2}(q) \in \RR^+[q^2]$.
\end{corollary}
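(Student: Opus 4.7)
The plan is to assemble $\omega$ from the three partial injections already established in Lemmas~\ref{lem:1_and_k_not_in_O}--\ref{lem:mapping_B_to_B'}. First I would partition the domain $\sO_{B,i}^{T_2} \cup \sO_{B',i}^{T_2}$ into three pieces according to where vertex $1$ (if present) is oriented:
\[
\mathcal{A} = \{O \in \sO_{B,i}^{T_2} : O(1) \in X \cup P_k\}, \quad
\mathcal{B} = \{O \in \sO_{B,i}^{T_2} : O(1) \in Y\}, \quad
\mathcal{C} = \sO_{B',i}^{T_2}.
\]
In $T_2$, the neighbour set of vertex $1$ is $X \cup \{2\} \cup Y \subseteq (X \cup P_k) \cup Y$, and these two unions are disjoint, so $\mathcal{A}$ and $\mathcal{B}$ really do partition $\sO_{B,i}^{T_2}$.

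Next I would define $\omega$ piecewise: send $\mathcal{A}$ into $\sO_{B,i}^{T_1}$ by the map $\mu$ of Lemma~\ref{B'_and_B_easy_case}; send $\mathcal{B}$ into $\sO_{B',i}^{T_1}$ by the map $\delta$ of Lemma~\ref{lem:mapping_B_to_B'}; and send $\mathcal{C}$ into $\sO_{B',i}^{T_1}$ by the map $\nu$ of Lemma~\ref{B'_and_B_easy_case}. Each of $\mu, \delta, \nu$ is individually injective and preserves the $\awy$ statistic.

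The main obstacle will be verifying the global injectivity of $\omega$. Since the image of $\mu$ lies in $\sO_{B,i}^{T_1}$ while those of $\delta$ and $\nu$ both lie in $\sO_{B',i}^{T_1}$, the only possible collisions occur between $\delta(\mathcal{B})$ and $\nu(\mathcal{C})$. Here the trick is to use the orientation at $k$ to separate the two sub-images. By the extra conclusion of Lemma~\ref{lem:mapping_B_to_B'}, for $O \in \mathcal{B}$ we have $(\delta(O))(k) = O(1) \in Y$. On the other hand, for $P \in \mathcal{C}$ the map $\nu$ copies orientations verbatim, and since $k$ is a leaf of $T_2$ with unique neighbour $k-1$, we must have $P(k) = k-1$, so $(\nu(P))(k) = k-1 \in P_k$. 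Since $Y \cap P_k = \emptyset$, the images $\delta(\mathcal{B})$ and $\nu(\mathcal{C})$ are disjoint and $\omega$ is a well-defined injection.

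Finally, because $\omega$ preserves $\awy$ and $\awy(O) = 2i + 2\sum_{v \in \fr(O)} \aw(v)$ is always an even non-negative integer, a term-by-term comparison of the generating functions defined in \eqref{defn:ais_sum} yields
\[
a_{B,i}^{T_1}(q) + a_{B',i}^{T_1}(q) - a_{B,i}^{T_2}(q) - a_{B',i}^{T_2}(q) = \sum_{N} q^{\awy(N)},
\]
where the sum ranges over $N$ in the complement of the image of $\omega$ inside $\sO_{B,i}^{T_1} \cup \sO_{B',i}^{T_1}$. Every exponent on the right is an even non-negative integer, so the difference lies in $\RR^+[q^2]$, completing the proof.
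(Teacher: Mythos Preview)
Your proof is correct and follows essentially the same approach as the paper: partition the domain according to whether $O \in \sO_{B,i}^{T_2}$ with $O(1) \in X \cup P_k$, $O \in \sO_{B,i}^{T_2}$ with $O(1) \in Y$, or $O \in \sO_{B',i}^{T_2}$, apply $\mu$, $\delta$, $\nu$ respectively, and separate the images of $\delta$ and $\nu$ by noting that $(\delta(O))(k) \in Y$ while $(\nu(P))(k) = k-1 \in P_k$. Your write-up is in fact slightly more explicit than the paper's, particularly in spelling out the final generating-function step.
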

\begin{proof}
If $O \in \sO_{B,i}^{T_2}$ is such that $O(1) \in X \cup P_k$, use Lemma
\ref{B'_and_B_easy_case}. On the other hand,  if $O(1) \in Y$, then 
we use Lemma \ref{lem:mapping_B_to_B'}.  Let $O' = \omega(O)$.
Note that in this case, vertex $k$ is oriented with $O'(k) \in Y$.

Similarly, if $O \in \sO_{B',i}^{T_2}$, then, 
we use Lemma  \ref{B'_and_B_easy_case}.  Note that in this
case if $O' = \omega(O)$, then $O'(k) = k-1 \in P_k$.  Thus,
the case mentioned in the earlier paragraph and this case
are disjoint and hence $\omega$ is an injection.
\end{proof}

\vspace{2 mm}

Our last family consists of subsets $B$ with both $1,k \in B$.  
Define another subset $B' \subseteq [n]$ using $B$ as follows:  
Let $B_{xy} = B\cap (X \cup Y)$ and let $B_p = B \cap P_k$.   
The set $B'$ will be used when $m \in P_k$.  In this case,
$m = \min_{v \in P_k, v \not \in B} v$ is the mimimum
vertex outside $B$ in $P_k$.  
Define $l = \max_{v \in P_k, v \not \in B} v$ to be the maximum
numbered vertex in $P_k$ not in $B$.   Define $m'= k+1-l$ and
$l' = k+1-m$.   Form $B_p^t$ by taking the union of the three
sets $A' = \{1,\ldots, m'-1\}$, $C' =\{l'+1, \ldots, k \}$ and 
$\{m'-m +x : x \in B \cap \{m+1,\ldots,l-1\}  \}$.  See 
Figure \ref{fig:case8} for an example.  
Define $B' = B_{xy} \cup B_p^t$.  Clearly, both 
$1,k \in B'$ and $(B')' = B$.

\begin{lemma}
\label{lem:1_and_k_in_O}
Let $B \subseteq [n]$  be such that both $1,k \in B$ and let $B'$ be as defined
above.  For all $i$, there 
is an injective map $\theta: \sO_{B,i}^{T_2}  \cup \sO_{B',i}^{T_2} \rightarrow 
\sO_{B,i}^{T_1} \cup \sO_{B',i}^{T_1}$ that preserves the
away statistic.
Thus,  $a_{B,i}^{T_1}(q) + a_{B',i}^{T_1}(q) - a_{B,i}^{T_2}(q) -a_{B',i}^{T_2}(q) 
\in \RR^+[q^2]$.
\end{lemma}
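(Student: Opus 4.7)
The plan is to build the injection $\theta$ by partitioning its domain according to where vertex $1$ is oriented in the source orientation $O$, following the strategy of Lemma~\ref{lem:mapping_B_to_B'}. The structural asymmetry to exploit is that vertex $k$ is a leaf in $T_2$ (forcing $O(k)=k-1$), while in $T_1$ vertex $k$ has the extra neighbours $Y$; symmetrically, vertex $1$ carries the subtrees rooted at $Y$ only in $T_2$. So orientations with $O(1)\in\{2\}\cup X$ should transfer identically from $T_2$ to $T_1$, whereas those with $O(1)\in Y$ must be rerouted so that vertex $k$ in $T_1$ absorbs the $Y$-orientation. The head/tail reflection and middle translation used to define $B_p^t$ are calibrated precisely so that rerouted orientations land in $\sO_{B',i}^{T_1}$ rather than $\sO_{B,i}^{T_1}$, keeping the four case-images disjoint.

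Concretely, the construction proceeds as follows. First, for any $O\in \sO_{B,i}^{T_2}\cup \sO_{B',i}^{T_2}$ with $O(1)\in\{2\}\cup X$, set $\theta(O):=O$ viewed as an orientation in $T_1$; every edge used is common to both trees, and since $m$ depends only on the underlying set, the bidirected-arc count and away statistic are preserved automatically. Second, for $O\in \sO_{B,i}^{T_2}$ with $O(1)=y\in Y$, define $\theta(O)\in \sO_{B',i}^{T_1}$ by setting $\theta(O)(k):=y$ and $\theta(O)(1):=2$, applying the reflection $v\mapsto k+1-v$ to the head $\{2,\dots,m-1\}$ and tail $\{l+1,\dots,k-1\}$ of $B_p$ (transporting each orientation $O(v)$ as $k+1-O(v)$), applying the translation $v\mapsto v+(m'-m)$ to the middle $B\cap\{m+1,\dots,l-1\}$ (shifting orientations by the same amount), copying orientations of $X$-vertices verbatim, and for each $v\in B\cap Y$ replacing $O(v)=1$ by $\theta(O)(v)=k$ (orientations into child-subtrees unchanged). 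The symmetric construction with $B$ and $B'$ interchanged handles $O\in \sO_{B',i}^{T_2}$ with $O(1)\in Y$, producing an image in $\sO_{B,i}^{T_1}$. Injectivity is transparent: the four case-images occupy four pairwise disjoint cells of $\sO_{B,i}^{T_1}\cup\sO_{B',i}^{T_1}$, separated by which side ($B$ versus $B'$) and where vertex $k$ is oriented ($k-1$ versus a vertex of $Y$).

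The main obstacle is verifying that the reflection-plus-translation preserves the away statistic. A uniform reflection $v\mapsto k+1-v$ on all of $B_p$ would fail on the middle: a vertex $v\in B\cap(m,l)$ oriented towards $m$ (so $O(v)=v-1$) would reflect to $v'=k+1-v$ with $O'(v')=v'+1$; but $v'\in(m',l')$ and $v'>m'$, so $O'(v')=v'+1$ actually points \emph{away} from $m'$. The author's shift-based definition of the middle of $B_p^t$ is tuned precisely to circumvent this: the translation $v\mapsto v+(m'-m)$ preserves the signed displacement $v-m$, so ``towards $m$'' passes to ``towards $m'$''. On the head and tail, reflection does carry vertices across $m'$, but a case check shows a head-vertex $v<m$ oriented towards $m$ maps to a tail-vertex $v'=k+1-v>l'$ oriented towards $m'$, and analogously in the ``away'' direction. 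Bidirected arcs are preserved because reflection and translation each respect consecutive-pair patterns within their segments, the boundary edges $\{m-1,m\}$ and $\{l,l+1\}$ cannot be bidirected (as $m,l\notin B$), and the edge $\{1,y\}$ in $T_2$ corresponds to $\{k,y\}$ in $T_1$, both being bidirected exactly when $O(y)=1$. A small auxiliary argument, using the identity map together with a local rewiring at vertex~$1$ to compensate for the change in vertex~$k$'s away-status, handles the degenerate case $P_k\subseteq B$ where $B'$ is not defined by the lemma.
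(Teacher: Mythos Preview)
Your construction for the principal case $m\in P_k$ is essentially the paper's Algorithm~1 (identity copy) and Algorithm~5 (reflection on the head/tail of $B_p$, translation on the middle), and your disjointness argument via the pair $(B\text{ vs.\ }B',\ \theta(O)(k)=k-1\text{ vs.\ }\theta(O)(k)\in Y)$ is fine. One small slip: in your first case the assertion that ``every edge used is common to both trees'' is false, since any $v\in B\cap Y$ with $O(v)=1$ uses the edge $\{1,v\}$, which exists only in $T_2$; you need the same replacement $O(v)=1\mapsto \theta(O)(v)=k$ here that you invoke in your second case. With that patch, case~1 does preserve the away statistic when $m\in P_k$.

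The genuine gap is your treatment of the ``degenerate case'' $P_k\subseteq B$, i.e.\ $m\in X\cup Y$. This is \emph{not} a small auxiliary argument and cannot be handled by ``the identity map together with a local rewiring at vertex~$1$''. Two concrete failures: first, when $m\in Y$, the unique path from any $j\in P_k$ to $m$ passes through vertex~$1$ in $T_2$ but through vertex~$k$ in $T_1$, so under the identity copy \emph{every} interior path vertex has its away/towards status flipped --- not just vertex~$k$. The paper repairs this globally with a full reversal on $P_k$ (Algorithm~2). Second, when $m\in X$ and $O(1)=y\in Y$ (or symmetrically $m\in Y$, $O(1)\in X$), the natural rewiring $\theta(O)(k):=y$, $\theta(O)(1):=x_1$ (where $x_1$ is the neighbour of~$1$ on the $1$-to-$m$ path) can change the bidirected-arc count: if $O(x_1)=1$ then $\{1,x_1\}$ becomes bidirected in $\theta(O)$ but was not in $O$, while simultaneously $\{k-1,k\}$ may lose a bidirected arc, and these need not cancel. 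The paper's Algorithms~3 and~4 avoid this by first locating the closest arrow-free edge on the $1$-to-$m$ path and rerouting a whole contiguous block of vertices past it; this is the missing idea in your sketch.
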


\begin{proof} 
We denote the orientation of vertex 1 in $O$ as $O(1)$.   
Given $B$, recall $m = \min_{v \not\in B} v$ is the minimum vertex outside $B$ and
that we have labelled vertices on the path $P_k$ first, vertices in $X$ next and
vertices of $Y$ last.  
There are nine cases based on $m$ and $O(1)$.  
Only one of the nine cases will involve $B$ getting changed to $B'$.  For 
now, let $O \in \sO_{B,i}^{T_2}$.
Define a map $\theta: \sO_{B,i}^{T_2} \rightarrow \sO_{B,i}^{T_1}$ as follows.
Let $O \in \sO_{B,i}^{T_2}$.  
We construct a unique 
$O' \in \sO_{B,i}^{T_1}$  by using the algorithms tabulated below.  
Though it seems that there are a large number of cases, the underlying
moves are very similar.

For vertices $u,v,a,b$, we explain an operation that we denote as $\revpath(u,v; a,b)$ 
that will be needed when $m \in Y$.  We will always have $d_{u,v} = d_{a,b}$ in 
$T_1$ where $d_{u,v}$ is the distance between vertices $u$ and $v$ in $T_1$.
Further, all vertices $w$ on the $u,v$ path $P_{u,v}$ in $T_1$ will 
be in $B$ and hence be oriented.
$\revpath(u,v;a,b)$ will change orientations of all vertices on $P_{u,v}$.  
We will use this operation in all the three cases  when $m \in Y$.
Due to our labelling convention 
and the fact that  $m \in Y$, all vertices of $P_k \cup X$ will be 
contained in $B$.  In $T_2$, vertex
$m$ has vertex 1 as its closest vertex among the vertices in $P_k$, whereas 
in $T_1$, vertex $m$ has vertex $k$ as its closest vertex among those 
in $P_k$.  Denote vertices on $P_{u,v}$ as $u=u_1,u_2,\ldots,u_s=v$ and
the vertices on the $(a,b)$ path as $a=a_1,a_2,\ldots,a_s=b$.  
In $O$, if vertex $a_i$ is oriented ``towards $m$'', then orient vertex 
$u_{s+1-i}$ ``towards $m$'' and likewise if vertex $a_i$ is oriented 
``away from $m$'', then orient vertex $u_{s+1-i}$ ``away from $m$''.
We give the map $\theta$ using several algorithms below.

$$ 
\begin{array}{| c | c | c | c |} 
\hline
& m \in P_k & m \in X & m \in Y \\ \hline
O(1)=2 \in P_k & \mbox{Use algorithm 1}  & \mbox{Use algorithm 1} & \mbox{Use algorithm 2} \\ \hline
O(1)=x \in X &  \mbox{Use algorithm 1}  & \mbox{Use algorithm 1} & \mbox{Use algorithm 4} \\ \hline
O(1)=y \in Y &  \mbox{Use algorithm 5}  & \mbox{Use algorithm 3} & \mbox{Use algorithm 2} \\ \hline
\end{array}
$$

\textbf{Algorithm 1: }  This is a trivial copying algorithm.  Define
$O' = \theta(O)$ with $O' \in \sO_{B,i}^{T_1}$  as follows.
In $O'$, retain the same orientation for all vertices $v \in B$.  
It is clear that $\awy_B^{T_2}(O) = \awy_B^{T_1}(O')$.

\bigskip
\textbf{Algorithm 2: } Since $m \in Y$, by our labelling convention, this means all the 
vertices of $P_k$ and $X$ are in $B$.  Form $O' = \theta(O)$ with 
$O' \in \sO_{B,i}^{T_1}$ by first copying the 
orientation $O$ for
each vertex.  Then perform $\revpath(1,k;1,k)$.  This is illustrated in Figure \ref{fig:case2}
when $O(1) = 2$ and $m \in Y$ and in Figure \ref{fig:case2prime} when both 
$O(1), m \in Y$.
It is clear that $\awy_B^{T_2}(O) = \awy_B^{T_1}(O')$.

\begin{figure}[h]
\centerline{\includegraphics[scale=0.5]{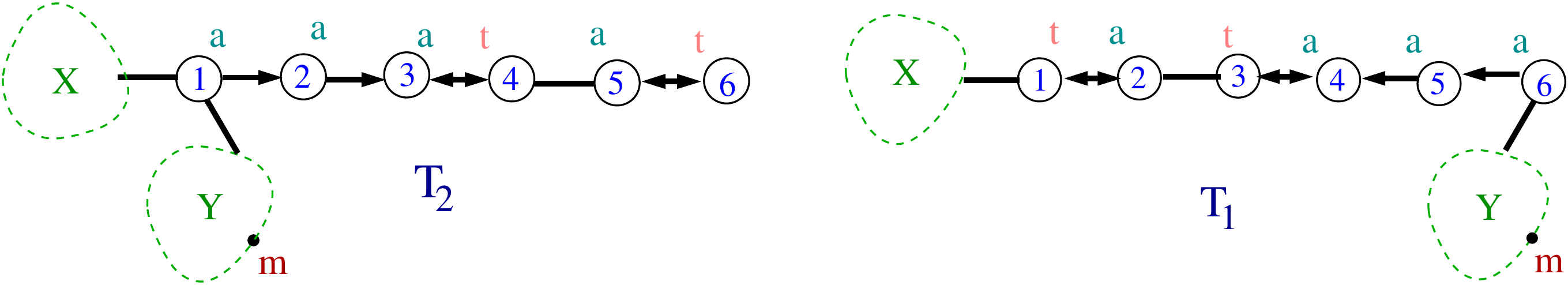}}
\caption{Illustrating Algorithm 2  with $O(1) \in P_k$, $m \in Y$.}
\label{fig:case2} 
\end{figure}

\begin{figure}[h]
\centerline{\includegraphics[scale=0.5]{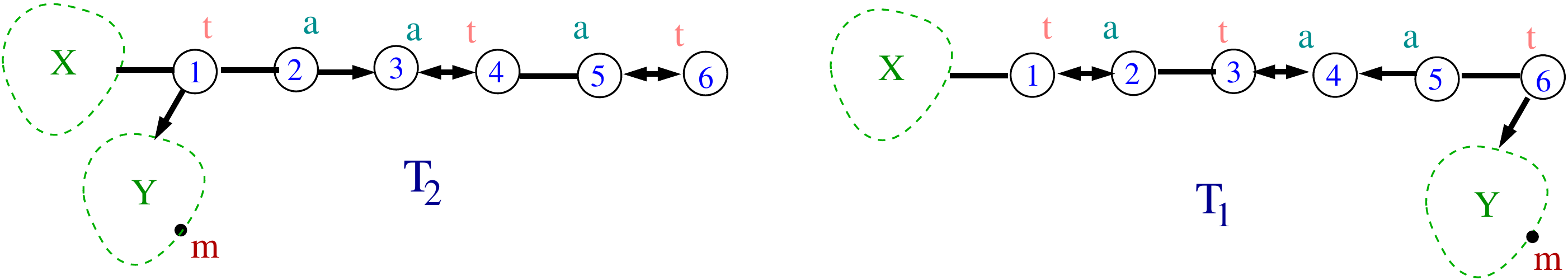}}
\caption{Illustrating Algorithm 2 with both $O(1),m \in Y$.}
\label{fig:case2prime} 
\end{figure}

\bigskip
\textbf{Algorithm 3: } We have $m \in X$ and $O(1) \in Y$.  Recall that we have labelled 
the vertices of $X$ in increasing order of distance from vertex 1.  We claim that 
there exists a unique edge 
$e = \{x,y\}$ on the path from $1$ to $m$ satisfying the  following two conditions:

\begin{enumerate}
  \item There is no arrow on $e$.  That is, either both $x,y \in B$ with $O(x) \neq y$ and 
	 $O(y) \neq x$ or $x \in B$ and $y = m$.
  \item Among such edges, $x$ is the closest vertex to $1$ 
	 distancewise (that is, $e$ is the unique closest edge to $1$).
\end{enumerate}

That there exists such an edge $e$ satisfying condition (1) above is easy to see.
Condition (2) is just a labelling of vertices of such an edge. 
Further, 
we label
the vertices on the path from $1$ to $x$ in increasing order of distance from
vertex $1$ as $1,x_1,x_2,\ldots,x_l=x$.  (See Figure 
\ref{fig:last_map_example} for an example.)

It is easy to see that $O(x_1) = 1$ and $O(x_i) = x_{i-1}$ for $2 \leq i \leq l$ 
and recall that $O(1) \in Y$.  
Form $O' = \theta(O)$ with $O' \in \sO_{B,i}^{T_1}$ as follows.  Vertices 
of $B$ not on the path 
from $x_l$ to $k$ in $T_1$ get the same orientation as in $O$.  
We orient the last $l+1$ vertices in $T_1$ on the $x_l$ to $k$ path 
$P_{x_l,k}$ away from  $m$,
and then orient the first $k-1$ vertices on $P_{x_l,k}$ as they 
were on $P_k$.  See Figure \ref{fig:last_map_example} for an example.   
As $k=6$ and $l=3$, the last $l+1$ vertices on the $(x_3,6)$ path
means that the last $4$ vertices are oriented away from $m$.  The
orientation of the remaining vertices is inherited from $T_2$.
It is clear that 
$|\bd(O)| = |\bd(O')|$ and that 
$\awy_B^{T_2}(O) = \awy_B^{T_1}(O')$.

\begin{figure}[h]
\centerline{\includegraphics[scale=0.55]{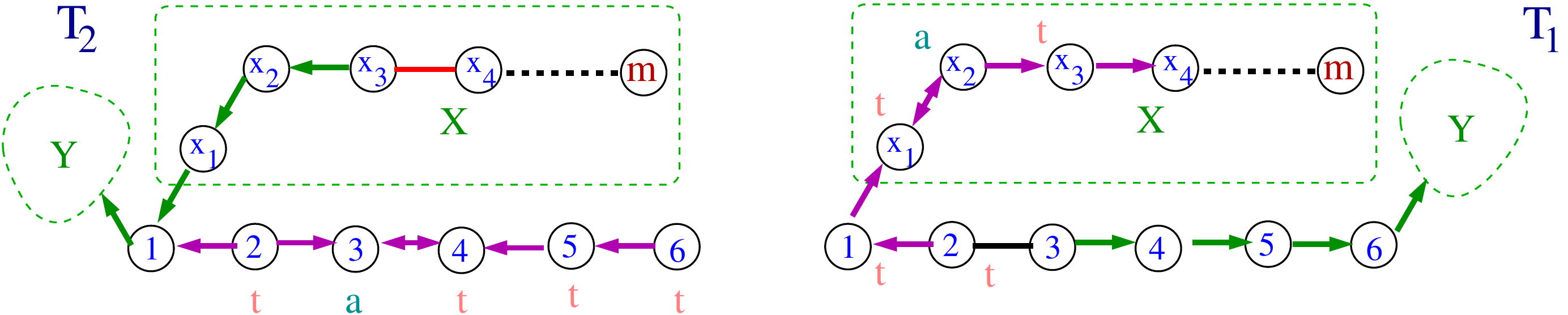}}
\caption{Illustrating Algorithm 3.}
\label{fig:last_map_example}
\end{figure}

\bigskip
\textbf{Algorithm 4:}  We have $O(1) \in X$ and $m \in Y$.  As done in
Algorithm 3, find the closest edge 
$e = \{x,y \}$ to vertex $1$ with $e$ having no arrow
on the $1$ to $m$ path.
As before, label $e$ as $\{x,y\}$ with $x$ being closer to $1$ than $y$, and label
the vertices on the path from $1$ to $x$ as $1,x_1,x_2,\ldots,x_l=x$  (see Figure 
\ref{fig:case6}).

It is easy to see that $O(x_1) = 1$ and $O(x_i) = x_{i-1}$ for $2 \leq i \leq l$. 
Note that there is a continuous string of $l+1$ vertices that are oriented away
from $m$.
Form $O' = \theta(O)$ with 
$O' \in \sO_{B,i}^{T_1}$ as follows.  Vertices of $B$ not on the path from $x_l$ to $k$ 
in $T_1$ get the same orientation as in $O$.  The closest $l+1$ vertices of $B$ on the path 
from $1$ to $x_l$ in $T_1$ get oriented away from $m$.  Denote the path comprising
the last $k-1$ vertices on the $(1, x_l)$-path as $P_{\ell}$.  Let $\alpha, \beta$
be the first and last vertices of $P_{\ell}$.  Perform $\revpath(\alpha, \beta; 2,k)$.
See Figure \ref{fig:case6} for an example.   
It is clear that $\awy_B^{T_2}(O) = \awy_B^{T_1}(O')$. 

\begin{figure}[h]
\centerline{\includegraphics[scale=0.55]{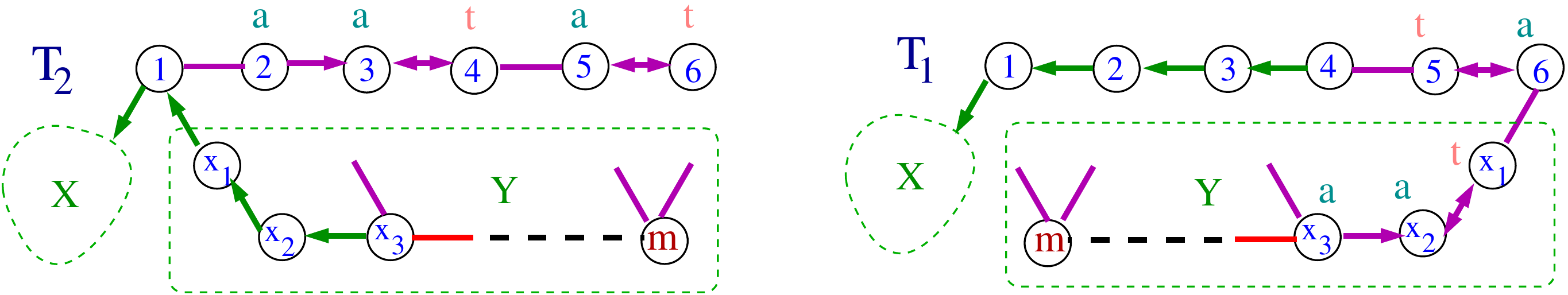}}
\caption{Illustrating Algorithm 4.}
\label{fig:case6}
\end{figure}

\bigskip
\textbf{Algorithm 5: } We have $O(1) = y \in Y$ and $m \in P_k$.
Recall $B' = B_{xy} \cup B_p^t$. 
Recall $l = \max_{v \in P_k, v \not \in B} v$.  Note that the minimum vertex 
$m' \not\in B'$ will be $m' = k+1-l$. 
Form $O' = \theta(O)$ with
$O' \in \sO_{B',i}$ as follows.  Note that in $T_2$, there is a 
continuous sequence $A$ of $m-1$ oriented vertices from $1$ to $m-1$ 
and another continuous sequence $C$ of $k-l$ oriented vertices from 
$l+1$ to $k$ in the path $P_k$ (see 
Figure \ref{fig:case8} for an example).  Similarly, in $T_1$, there is a 
continuous sequence $A'$ of $m'-1$ oriented vertices from $1$ to $m'-1$ 
and another continuous sequence $C'$ of $k-l'$ oriented vertices from 
$l'+1$ to $k$ in the path $P_k$.

It is easy to see that $|A| = |C'|$ and $|C| = |A'|$.
If vertex $s \in A$ is oriented away from (or towards) $m$ in $O$, then in
$O'$ orient vertex $k+1-s$ away from (or towards respectively) 
$m'$.  Likewise,
if vertex $s \in C$ is oriented away from (or towards) $m$ in $O$, then in
$O'$ orient vertex $k+1-s$ away from (or towards respectively) 
$m'$.  

Lastly, in $O'$ copy  the orientation of vertices 
in $B$ that lie between $m$ and $l$ in $T_2$ as they were to the 
vertices in $B'$ between $m'$ and $l'$ in $T_1$.  Formally,
if vertex $s \in P_k$ with $m < s < l$  is oriented
away from (or towards) $m$ in $O$, then in
$O'$ orient vertex $(m'-m)+s$ away from (or towards respectively) 
$m'$.   

For vertices $s \in B_p$, if 
See Figure \ref{fig:case8} for an example.  
Clearly, $|\bd(O)| = |\bd(O')|$ and 
$\awy_B^{T_2}(O) = \awy_{B'}^{T_1}(O')$.  This completes 
Algorithm 5.

\begin{figure}[h]
\centerline{\includegraphics[scale=0.55]{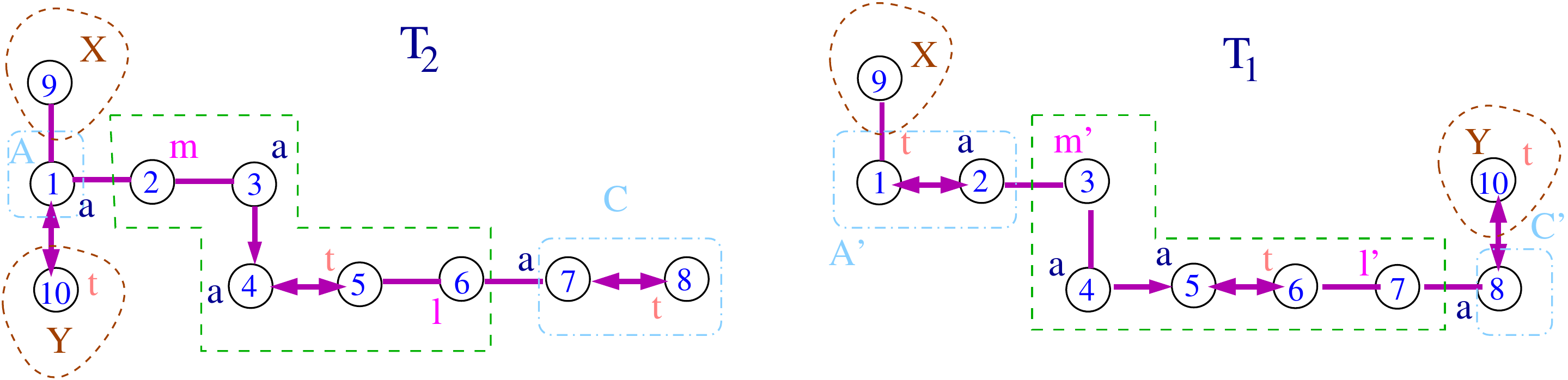}}
\caption{Illustrating Algorithm 5 with $B = \{1,3,4,5,7,8,10 \}$ and  $B' = \{1,2,4,5,6,8,10 \}$.  
}
\label{fig:case8}
\end{figure}

When $B = [n]$,  note that all vertices are oriented and hence 
there exists at least one bidirected edge.
In this case, 
we have $\awy_B^{}(O) = \aw(O,e)$, where as defined in 
\cite{mukesh-siva-hook}, $\aw(O,e)$ is found with respect to the lexicographic
minimum bidirected edge $e \in O$.  If the lexicographic edge is $e = \{u,v\}$,
we let $m = \min(u,v)$ be the smaller numbered vertex among $u,v$.  We find
the statistic $\aw(O,e)$
with respect to $m$.  It is simple to note that
among the nine cases, the following will not occur when $B = [n]$ due to
our labelling convention:  

\noindent
$(1)$ $m \in X$ and $O(1) \in P_k$, \hfill  
$(2)$ $m \in Y$ and $O(1) \in P_k$ and \hfill
$(3)$ $m \in Y$ and $O(1) \in X$.

In the remaining cases, we follow the same algorithms.
It is easy to see that the pair $(m, O(1))$ is different in all the  nine cases.
We do not change $B$ in eight cases, except in Algorithm 5.   Thus, 
we get an injection in these eight cases.  
When Algorithm 5 is run,
we get an injection from $\sO_{B,i}^{T_2}$ to $\sO_{B',i}^{T_1}$
and similarly we get an injection from
 $\sO_{B',i}^{T_2}$ to $\sO_{B,i}^{T_1}$.  Thus,
we get an injection from 
$\sO_{B',i}^{T_2} \cup \sO_{B,i}^{T_2}$ to $\sO_{B,i}^{T_1} 
\cup \sO_{B',i}^{T_1}$, completing the proof.
\end{proof}

\vspace{2 mm}

With these Lemmas in place, we can now prove Theorem \ref{thm:main}.

\vspace{2 mm}

\begin{proof} (Of Theorem \ref{thm:main}) 
We group the set of $r$-sized subsets $B$ into three categories:
those without $1,k$, those with either $1$ or $k$ and those
with both $1,k$.
By Lemmas \ref{lem:1_and_k_not_in_O}, \ref{lem:1_and_k_in_O} and 
Corollary \ref{cor:mix_1_and_k} it is clear that 
there is an injective map from 
$\sO_{r,i}^{T_2}$ to  $\sO_{r,i}^{T_1}$ for all $r$ and $i$.  By 
Corollary \ref{cor:coeff_interpret},
$c_{\lambda,r}^{\sL_{T_2}^q}(q) - c_{\lambda,r}^{\sL_{T_1}^q}(q) \in \RR^+[q^2]$
for all $\lambda, r$.  
\end{proof}

\begin{corollary}
Setting $q=1$ in $\sL_T^q$, we infer that for all $r$, the coefficient
of $x^{n-r}$ in the 
immanantal polynomial of the Laplacian $L_T$ of $T$ decreases 
in absolute value as we go up $\GTS_n$.
Using Lemma \ref{lem:csikvari-prelim}, we thus get a 
more refined and hence stronger result 
than Theorem \ref{thm:chan_lam_yeo}.
\end{corollary}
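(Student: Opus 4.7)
The plan is to deduce the Corollary as an essentially immediate specialisation of Theorem \ref{thm:main} together with Csikv{\'a}ri's structural Lemma \ref{lem:csikvari-prelim}, so the work reduces to verifying that the $q$-Laplacian machinery collapses correctly when $q=1$ and that the cover-relation inequality bootstraps to an inequality on arbitrary comparable pairs. First, I would record the two preliminary observations that make the reduction trivial: (i) by the defining formula $\sL_T^q = I + q^2(D-I) - qA$ given in Section \ref{sec:intro}, substituting $q=1$ yields $\sL_T^1 = I + (D-I) - A = D - A = L_T$, and hence $f_\lambda^{\sL_T^q}(x)\big|_{q=1} = f_\lambda^{L_T}(x)$, so $c_{\lambda,r}^{\sL_T^q}(1) = c_{\lambda,r}^{L_T}$; and (ii) any polynomial in $\RR^+[q^2]$ evaluates to a non-negative real at $q=1$.

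Next, I would invoke Theorem \ref{thm:main}: whenever $T_2$ covers $T_1$ in $\GTS_n$, the difference $c_{\lambda,r}^{\sL_{T_1}^q}(q) - c_{\lambda,r}^{\sL_{T_2}^q}(q)$ lies in $\RR^+[q^2]$. Specialising at $q=1$ using observation (i) and positivity (ii) gives $c_{\lambda,r}^{L_{T_1}} \geq c_{\lambda,r}^{L_{T_2}}$, i.e.\ the absolute value of the coefficient of $x^{n-r}$ in the $\lambda$-immanantal polynomial of $L_T$ is non-increasing along every cover relation. Since $\GTS_n$ is a finite poset and any comparison $T \leq_{\GTS_n} T'$ is realised by a finite chain of covers, transitivity of $\geq$ in $\RR$ immediately upgrades this to: $c_{\lambda,r}^{L_T} \geq c_{\lambda,r}^{L_{T'}}$ for every pair $T \leq_{\GTS_n} T'$ and every $\lambda \vdash n$, $0 \leq r \leq n$.

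Finally, to recover and strengthen Theorem \ref{thm:chan_lam_yeo}, I would apply Lemma \ref{lem:csikvari-prelim}, which asserts that $P_n$ is the unique minimum and $S_n$ the unique maximum of $\GTS_n$. Thus for any tree $T$ on $n$ vertices we have $P_n \leq_{\GTS_n} T \leq_{\GTS_n} S_n$, and the chain of inequalities just established collapses at the two extremes to $c_{\lambda,r}^{L_{S_n}} \leq c_{\lambda,r}^{L_T} \leq c_{\lambda,r}^{L_{P_n}}$, recovering \eqref{eqn:chan_lam_yeo}. The refinement is clear: Theorem \ref{thm:chan_lam_yeo} compares each $T$ only to the two extremal trees, whereas the conclusion above compares every pair of $\GTS_n$-comparable trees.

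There is no real obstacle in this proof: the heavy lifting is entirely inside Theorem \ref{thm:main}, whose proof was completed in Section \ref{sec:coeff_imman_poly} via the injections of Lemmas \ref{lem:1_and_k_not_in_O}, \ref{B'_and_B_easy_case}, \ref{lem:mapping_B_to_B'} and \ref{lem:1_and_k_in_O}. The only point meriting a brief remark is why one is entitled to pass from $\RR^+[q^2]$ containment to a real inequality at $q=1$: this is because every element of $\RR^+[q^2]$ has the form $\sum_j \alpha_j q^{2j}$ with $\alpha_j \geq 0$, which evaluates to $\sum_j \alpha_j \geq 0$ at $q=1$.
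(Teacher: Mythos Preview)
Your proposal is correct and matches the paper's approach exactly: the paper states this corollary without proof, as it follows immediately from Theorem \ref{thm:main} by specialising $q=1$ (so $\sL_T^q$ becomes $L_T$) together with Lemma \ref{lem:csikvari-prelim}. Your write-up simply makes explicit the two trivial observations---that $\sL_T^1=L_T$ and that elements of $\RR^+[q^2]$ are non-negative at $q=1$---and the transitivity along chains, which the paper leaves implicit.
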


\begin{corollary}
\label{cor:immanant_q-Laplacian}
Let $T_1,T_2$ be trees on $n$ vertices with respective $q$-Laplacians 
$\sL_{T_1}^q, \sL_{T_2}^q$.   Let $T_2\geq_{\GTS_n} T_1$ and 
let $d_{\lambda}(\sL_{T_i}^q)$ denote the 
immanant of $\sL_{T_i}^q$ for $1\leq i \leq 2$ corresponding to the 
partition $\lambda \vdash n$.  By comparing the constant term of the
immanantal polynomial, for all $\lambda \vdash n$, 
we infer $d_{\lambda}(\sL_{T_2}^q) 
\leq  d_{\lambda}(\sL_{T_1}^q)$.  This refines the inequalities
in Theorem \ref{thm:chan_lam_yeo}.
\end{corollary}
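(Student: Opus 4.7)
The plan is to deduce the inequality directly from Theorem \ref{thm:main} by reading off the constant term of the immanantal polynomial, which is exactly the coefficient indexed by $r=n$. First, I would identify the constant term of $f_\lambda^{\sL_T^q}(x)$ in two ways. From the expansion $f_\lambda^{\sL_T^q}(x) = \sum_{r=0}^n (-1)^r c_{\lambda,r}^{\sL_T^q}(q)\, x^{n-r}$, setting $x=0$ gives $f_\lambda^{\sL_T^q}(0) = (-1)^n c_{\lambda,n}^{\sL_T^q}(q)$. On the other hand, $f_\lambda^{\sL_T^q}(0) = d_\lambda(-\sL_T^q)$; pulling the factor $-1$ out of each of the $n$ rows in the immanant (which is multilinear in rows) yields $d_\lambda(-\sL_T^q) = (-1)^n d_\lambda(\sL_T^q)$. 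Matching these two expressions gives the clean identity $c_{\lambda,n}^{\sL_T^q}(q) = d_\lambda(\sL_T^q)$.

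Next, I would invoke Theorem \ref{thm:main} with $r=n$, yielding $c_{\lambda,n}^{\sL_{T_1}^q}(q) - c_{\lambda,n}^{\sL_{T_2}^q}(q) \in \RR^+[q^2]$. The crucial observation is that any element of $\RR^+[q^2]$ is a nonnegative combination of the quantities $q^{2k}$, each of which is nonnegative for every real $q$; hence such a polynomial is pointwise nonnegative on all of $\RR$ (not merely on $q \geq 0$). Substituting the identification from the previous paragraph therefore gives $d_\lambda(\sL_{T_1}^q) - d_\lambda(\sL_{T_2}^q) \geq 0$, which is the asserted inequality.

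Finally, to justify the ``refinement'' claim at the end, I would combine the just-proved cover-relation inequality with Lemma \ref{lem:csikvari-prelim}: since $P_n$ is the unique minimum and $S_n$ the unique maximum of $\GTS_n$, any tree $T$ sits on a saturated chain $P_n = U_0 <_{\GTS_n} \cdots <_{\GTS_n} U_s = T <_{\GTS_n} \cdots <_{\GTS_n} V_t = S_n$. Iterating the cover inequality along this chain, and specializing at $q=1$ so that $\sL_T^q = L_T$, recovers $d_\lambda(L_{S_n}) \leq d_\lambda(L_T) \leq d_\lambda(L_{P_n})$, i.e.\ the $r=n$ case of Theorem \ref{thm:chan_lam_yeo}, while additionally providing a strictly finer monotonicity along every cover and for every real $q$.

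There is essentially no obstacle in this corollary: all the combinatorial work has already been carried out in Theorem \ref{thm:main}. The only points requiring care are correctly tracking the sign $(-1)^n$ when passing between $d_\lambda(xI - \sL_T^q)$ and $d_\lambda(\sL_T^q)$, and noting that membership in $\RR^+[q^2]$ (rather than merely $\RR^+[q]$) is what guarantees the pointwise inequality for every real $q$, including negative values.
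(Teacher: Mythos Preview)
Your proposal is correct and matches the paper's intended argument: the corollary is stated as an immediate consequence of Theorem~\ref{thm:main} by reading off the constant term (the $r=n$ coefficient), and the paper gives no separate proof beyond that. The only minor point to tighten is that Theorem~\ref{thm:main} is stated for a covering relation while the corollary assumes $T_2 \geq_{\GTS_n} T_1$; you already handle this implicitly via a saturated chain in your refinement paragraph, so just move that observation earlier (noting that $\RR^+[q^2]$ is closed under sums) to cover the general comparable case.
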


\section{$q^2$-analogue of vertex moments in a tree}
\label{sec:centroid}
Merris in \cite{merris-second-imm-polynom}
gave an alternate definition of the centroid of a tree $T$ 
through its vertex moments.  He then  showed that
the sum of vertex moments appears as a coefficient 
of the immanantal polynomial of $L_T$ corresponding to
the partition $\lambda = 2,1^{n-2}$.
In this section, we define a $q^2$-analogue of vertex moments and 
through it, 
the centroid of a tree.   We then show that the sum of $q^2$-analogue
of the vertex moments of all vertices appears as a coefficient 
in the second immanantal polynomial of $\sL_T^q$.  
Thus, by Theorem \ref{thm:main},
the sum of the $q^2$-analogue of vertex 
moments decreases as we go up on $\GTS_n$.  
We further show that as we go up on $\GTS_n$,  
the value of the minimum $q^2$-analogue of the vertex moments also 
decreases. 

The following definition of vertex moments is from Merris 
\cite{merris-second-imm-polynom}.
Let $T$ be a tree with vertex set $[n]$.  For a vertex $i \in [n]$,
define 
$\ssM^T(i) = \sum_{j \in [n]} d_j d_{i,j}$ where $d_j$ is 
the degree of vertex $j$ in $T$ and $d_{i,j}$ is the distance
between vertices $i$ and $j$ in $T$.   Define 
the $q^2$-analogue of the distance $d_{i,j}$ between vertices
$i$ and $j$ to be 
$[d_{i,j}]_{q^2} =1+q^2+(q^2)^2+\cdots+(q^2)^{d_{i,j}^{ }-1}$ 
and define for all $i \in [n]$, $[d_{i,i}]_{q^2} = 0$.
We define the $q^2$-analogue of the moment of vertex $i$ 
of $T$ as
\begin{equation}
\label{eq:1def_centroid}
\ssM_{q^2}^T(i)=\sum\limits_{j\in [n]} [1+q^2(d_j-1)] [d_{i,j}]_{q^2}.
\end{equation}
Fix $q\in \RR$, $q \not= 0$.  Vertex $i$ is called the centroid 
of $T$  if $\ssM_{q^2}^T(i)=\min_{j\in [n]} \ssM_{q^2}^T(j)$.  
We clearly recover Merris' definition of moments when we plug in $q=1$ in 
\eqref{eq:1def_centroid}.
Merris showed that his definition of centroid coincides with 
the usual definition of the centroid of a tree $T$.
In \cite{bapat-siva-third-immanant},
Bapat and Sivasubramanian while studying the third immanant of $\sL_T^q$ 
proved a lemma that we need.  The following lemma is obtained 
by setting $s = q^2$ in \cite[Lemma 3]{bapat-siva-third-immanant}.

\begin{lemma}[Bapat and Sivasubramanian]
  \label{lem:frm_third_imm}
Let $T$ be a tree with vertex set $V = [n]$ and let $i \in [n]$.  Then,  
\begin{equation}
  \label{eqn:third_imm}
\sum_{j \in [n]} q^2(d_j -1) [d_{i,j}]_{q^2} = 
\sum_{j \in [n]}[d_{i,j}]_{q^2} - (n-1).
\end{equation}
\end{lemma}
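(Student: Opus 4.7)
The plan is to root the tree $T$ at the fixed vertex $i$, reinterpret the factor $d_j-1$ as a count of children, and then use the $q$-analogue telescoping identity
\[
q^2\,[m]_{q^2} \;=\; q^2 + q^4 + \cdots + q^{2m} \;=\; [m+1]_{q^2} - 1,
\]
which will convert a sum indexed by parent-child pairs into the right-hand side of \eqref{eqn:third_imm}.

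First, I would set up notation. Root $T$ at $i$ and, for each vertex $j\in[n]$, let $c(j)$ denote the number of children of $j$ in this rooted tree. Then every $j\neq i$ has a unique parent $p(j)$ with $d_{i,p(j)} = d_{i,j}-1$, and the degrees satisfy $d_j = c(j)+1$ for $j\neq i$ while $d_i = c(i)$. Consequently $d_j-1 = c(j)$ for all $j\neq i$, and for $j=i$ the ``discrepancy'' $d_i-1 = c(i)-1$ is harmless because it is paired with $[d_{i,i}]_{q^2}=0$. Hence
\[
\sum_{j\in[n]}(d_j-1)\,[d_{i,j}]_{q^2} \;=\; \sum_{j\in[n]} c(j)\,[d_{i,j}]_{q^2}.
\]

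Next, I would reindex this last sum by the parent-child edges: each term $c(j)[d_{i,j}]_{q^2}$ contributes $[d_{i,j}]_{q^2}$ once for every child $k$ of $j$, and since every vertex $k\neq i$ is the child of exactly one $j$, double counting gives
\[
\sum_{j\in[n]} c(j)\,[d_{i,j}]_{q^2} \;=\; \sum_{k\neq i} [d_{i,p(k)}]_{q^2} \;=\; \sum_{k\neq i} [d_{i,k}-1]_{q^2}.
\]
Multiplying through by $q^2$ and applying the telescoping identity $q^2[d_{i,k}-1]_{q^2} = [d_{i,k}]_{q^2}-1$ to each summand yields
\[
\sum_{j\in[n]} q^2(d_j-1)\,[d_{i,j}]_{q^2} \;=\; \sum_{k\neq i}\bigl([d_{i,k}]_{q^2}-1\bigr) \;=\; \sum_{j\in[n]}[d_{i,j}]_{q^2} - (n-1),
\]
using that $[d_{i,i}]_{q^2}=0$ to extend the final sum back over all of $[n]$. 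This is exactly \eqref{eqn:third_imm}.

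There is no real obstacle; the only subtle point is the mismatch at the root $i$ between $d_i-1$ and $c(i)$, and the boundary case $d_{i,k}=1$ where $[0]_{q^2}=0$. Both are absorbed automatically by the vanishing of $[d_{i,i}]_{q^2}$, so the computation goes through cleanly. The essence of the lemma is therefore the $q^2$-telescoping $q^2[m]_{q^2}=[m+1]_{q^2}-1$ combined with the standard tree identity ``$\sum_j (d_j-1)f(j) = \sum_{k\neq i} f(p(k))$'' when $T$ is rooted at $i$.
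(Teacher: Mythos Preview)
Your proof is correct. The rooting argument cleanly handles the two edge cases you flag (the root discrepancy and the $d_{i,k}=1$ case), and the telescoping identity $q^2[m]_{q^2}=[m+1]_{q^2}-1$ does the rest.

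As for comparison: the paper does not actually prove this lemma. It is quoted from \cite{bapat-siva-third-immanant} (Bapat and Sivasubramanian), with the remark that one obtains it by setting $s=q^2$ in Lemma~3 of that reference. So there is no in-paper argument to compare against; your self-contained combinatorial proof supplies what the paper only cites.
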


The following alternate expression for $\ssM_{q^2}^T(i)$ is easy 
to derive using Lemma \ref{lem:frm_third_imm} and the definition
\eqref{eq:1def_centroid}.  As the proof
is a simple manipulation, we omit it.

\begin{lemma}
\label{lem:eqv_def}
Let $T$ be a tree with vertex set $[n]$ and let $i \in [n]$.  Then,
\begin{equation}
  \label{eqn:more_moment}
\ssM_{q^2}^T(i)=(n-1)+2q^2 \sum_{j \in [n]} (d_j-1) [d_{i,j}]_{q^2}.
\end{equation}
\end{lemma}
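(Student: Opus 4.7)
The plan is to start directly from the definition \eqref{eq:1def_centroid} of $\ssM_{q^2}^T(i)$ and split the summand $[1 + q^2(d_j - 1)][d_{i,j}]_{q^2}$ into two pieces. This yields
\[
\ssM_{q^2}^T(i) = \sum_{j \in [n]} [d_{i,j}]_{q^2} + \sum_{j \in [n]} q^2(d_j - 1)[d_{i,j}]_{q^2}.
\]
The second sum is exactly the left-hand side of \eqref{eqn:third_imm} in Lemma \ref{lem:frm_third_imm}, so I would apply that lemma to rewrite it as $\sum_{j \in [n]} [d_{i,j}]_{q^2} - (n-1)$.

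Substituting this in gives
\[
\ssM_{q^2}^T(i) = 2 \sum_{j \in [n]} [d_{i,j}]_{q^2} - (n-1).
\]
To reach the claimed form, I would use Lemma \ref{lem:frm_third_imm} a second time, this time solved for $\sum_{j \in [n]} [d_{i,j}]_{q^2}$, which gives
\[
\sum_{j \in [n]} [d_{i,j}]_{q^2} = (n-1) + q^2 \sum_{j \in [n]} (d_j - 1)[d_{i,j}]_{q^2}.
\]
Plugging this in and simplifying yields $\ssM_{q^2}^T(i) = (n-1) + 2 q^2 \sum_{j \in [n]} (d_j - 1)[d_{i,j}]_{q^2}$, as required.

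There is essentially no obstacle here: the whole lemma is an algebraic rearrangement that uses Lemma \ref{lem:frm_third_imm} in two equivalent forms (once to eliminate the $q^2(d_j - 1)$ factor from the definition, and once to re-introduce it with the coefficient $2$). The only bookkeeping point to be careful about is the $j = i$ term, where $[d_{i,i}]_{q^2} = 0$ by convention, so it contributes nothing in either form of the identity and can be silently kept in the sum over all $j \in [n]$. This is why I would present the manipulation as a two-line computation rather than splitting off the diagonal term, and it is exactly why the authors remark that the proof is a simple manipulation and may be omitted.
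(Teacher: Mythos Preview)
Your proposal is correct and follows exactly the route the paper indicates: expand the definition \eqref{eq:1def_centroid}, apply Lemma \ref{lem:frm_third_imm} to replace $\sum_j q^2(d_j-1)[d_{i,j}]_{q^2}$, and then use the same identity in the reverse direction to obtain \eqref{eqn:more_moment}. This is precisely the ``simple manipulation'' the authors allude to when they omit the proof.
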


The following lemma gives an algebraic interpretation for
the $q^2$-analogue of vertex moments in $T$.

\begin{lemma}
\label{lem:alternate_def}
Let $T$ be a tree with vertex set $[n]$. 
Let  $i\in [n]$ be a vertex and let $B=[n]-\{i\}$. 
Then, 
\begin{equation}
  \label{eqn:moment}
\ssM_{q^2}^T(i)=(n-1)a_{B,0}^{T}(q)+2a_{B,1}^{T}(q).
\end{equation}
\end{lemma}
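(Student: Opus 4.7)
The plan is to combinatorially identify $a_{B,0}^{T}(q)$ and $a_{B,1}^{T}(q)$ for $B = [n]-\{i\}$ and match the result with the expression for $\ssM_{q^2}^T(i)$ in Lemma \ref{lem:eqv_def}. The remark preceding this lemma already gives $a_{B,0}^{T}(q) = 1$ whenever $|B| = n-1$, so it suffices to prove
$$a_{B,1}^{T}(q) \;=\; q^2 \sum_{j \in [n]} (d_j - 1)\,[d_{i,j}]_{q^2},$$
since then $(n-1)a_{B,0}^{T}(q) + 2a_{B,1}^{T}(q) = (n-1) + 2q^2\sum_{j}(d_j - 1)[d_{i,j}]_{q^2}$ equals $\ssM_{q^2}^T(i)$ by Lemma \ref{lem:eqv_def}.

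I would decompose $a_{B,1}^{T}(q)$ according to the unique bidirected edge $e = \{u,v\}$ of each $O \in \sO_{B,1}^T$. View $T$ as rooted at $i$ and, without loss of generality, let $v$ be the endpoint of $e$ closer to $i$, so $u$ is a child of $v$. Write $T_u$ for the subtree rooted at $u$, $T_v$ for the other component obtained on deleting $e$, and let $P_{iv} = (i=w_0,w_1,\ldots,w_k=v)$ with $k = d_{i,v}$. The key structural claim is threefold: (i) every vertex of $T_u\setminus\{u\}$ is forced to orient towards its parent in $T_u$ rooted at $u$; (ii) for every subtree $S$ of $T_v$ hanging off a vertex $w$ of $P_{iv}$, every vertex of $S\setminus\{w\}$ is forced to orient towards $w$; and (iii) as a consequence of (ii), an interior vertex $w_j$ of $P_{iv}$ cannot orient into a non-empty hanging subtree at $w_j$ --- this would create a second bidirected arc --- so $w_j$ must orient along $P_{iv}$. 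All three forcing statements are instances of the same ``maximum-distance-from-root forces parent orientation'' argument already used implicitly to establish $a_{B,0}^T(q)=1$.

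Label the orientation of an interior $w_j$ by $+$ (oriented to $w_{j+1}$, away from $i$) or $-$ (oriented to $w_{j-1}$, towards $i$). The only remaining constraint for no second bidirected arc is that the word $O(w_1)\cdots O(w_{k-1})$ avoid the factor ``$+-$'', and there are exactly $k$ such words, indexed by $a\in\{0,\ldots,k-1\}$: the word consisting of $a$ dashes followed by $k-1-a$ pluses. Since (i) and (ii) force every free vertex outside $P_{iv}$ to orient towards $i$ (contributing $0$ to $\aw$), the orientation indexed by $a$ has $\awy_B^T(O) = 2+2(k-1-a)$, so the contribution of $e$ to $a_{B,1}^T(q)$ is $q^2\sum_{a=0}^{k-1} q^{2(k-1-a)} = q^2\,[d_{i,v}]_{q^2}$. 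Summing over admissible edges $e=\{u,v\}$ and noting that each $j\in[n]\setminus\{i\}$ is the closer endpoint for exactly $d_j-1$ such edges (namely its children in the $i$-rooted tree), we obtain $a_{B,1}^T(q) = q^2\sum_{j\neq i}(d_j-1)[d_{i,j}]_{q^2}$; the $j=i$ term may be appended freely since $[d_{i,i}]_{q^2}=0$. The main obstacle is the forcing step (i)--(iii): although each is a standard farthest-from-root argument, applying it in three distinct places requires care. Everything else is bookkeeping of $\aw$ and an elementary summation.
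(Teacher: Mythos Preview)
Your argument is correct and mirrors the paper's: root $T$ at $i$, fix the bidirected edge, force all off-path vertices toward $i$, and enumerate the admissible orientations along the path from $i$ to that edge. The one noteworthy difference is in the final grouping: by summing over the \emph{closer} endpoint $v$ of the bidirected edge (each $j\neq i$ appearing with multiplicity $d_j-1$), you obtain $a_{B,1}^T(q)=q^2\sum_j(d_j-1)[d_{i,j}]_{q^2}$ directly and plug straight into Lemma~\ref{lem:eqv_def}; the paper instead parameterizes by the farther endpoint to reach $a_{B,1}^T(q)=q^2\sum_{j\neq i}[d_{i,j}-1]_{q^2}$ and must then invoke Lemma~\ref{lem:frm_third_imm} to convert. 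Your route is marginally cleaner and sidesteps that lemma.
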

\begin{proof}
Clearly for $B=[n]-\{i\}$, 
we have a unique $B$-orientation $O\in \sO_{B,0}$ with $\awy_B^{T}(O)=0$.
This is the orientation where every vertex $j \in [n] - i$ gets oriented
towards $i$.  Thus $a_{B,0}^{T}(q)=1$.

We will show that 
$a_{B,1}^{T}(q)= q^2\sum_{j \in [n]}(d_j -1)[d_{i,j}]_{q^2}$
and appeal to \eqref{eqn:more_moment}.  
By \eqref{eqn:third_imm}, equivalently, we need to show that
\begin{equation*}
a_{B,1}^{T}(q)  =  \sum_{j \in [n]} [d_{i,j}]_{q^2} - (n-1) 
 =  q^2 \sum_{j \in [n], j \not= i} [d_{i,j}-1]_{q^2}.
\end{equation*}

Root the tree $T$ at the vertex $i$ and recall $B = [n] - \{i \}$.  
Thus $m = i$.
Let $O$ be a $B$-orientation with one bidirected arc $e = \{u,v\}$
where we label the edge $e$ such that $d_{i,v} = d_{i,u} + 1$.  
That is, $u$ occurs on the path from $i$ to $v$ in $T$.
Since $n-1$ vertices are oriented and one edge is bidirected,
there must be one edge without any arrows (when seen pictorially).
It is easy to see that all edges $f \in T$ not on the path $P_{i,u}$
from $i$ to $u$ must be oriented towards $i$.  Moreover, it is clear
that the edge $f$ without arrows must be on the path $P_{i,u}$.
Thus, our choice
lies in orienting vertices in $P_{i,u}$ such that one edge does
not get any arrows.  
Let $f = \{x,y\}$ with $x$ being on the path from $i$ to $y$ in $T$
($x$ could be $i$ or $y$ could be $u$).  Thus, there are $d_{i,u}-1$ 
choices for the edge $f$.  In $O$, clearly, all
vertices from $y$ till $u$ on the path $P_{i,u}$ must be oriented
away from $i$.  Hence the contribution of all such orientations 
will be $q^2 + q^4 + \cdots + q^{2d_{i,u}-2}$.
Thus vertex $u$ contributes $q^2 [d_{i,u}-1]_{q^2}$ to
$a_{B,1}^{ }(q)$.  Summing over all vertices $u$ completes the
proof.
\end{proof}

\begin{theorem}
  \label{thm:algebr_inv}
Let $T$ be a tree with vertex set $[n]$  and  $q$-Laplacian $\sL_T^q$. 
Let $\lambda=2,1^{n-2}\vdash n$. Then, 
$$c_{\lambda,n-1}^{\sL_T^q}(q)=\sum\limits_{i=1}^{n} \ssM_{q^2}^T(i).$$
\end{theorem}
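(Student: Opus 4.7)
The plan is to assemble the statement directly from the machinery already developed, with essentially no new combinatorial work required. By Corollary \ref{cor:coeff_interpret}, we can write
\begin{equation*}
c_{\lambda,n-1}^{\sL_T^q}(q) = \sum_{i=0}^{\lfloor (n-1)/2 \rfloor} \alpha_{\lambda,i}\, 2^i\, a_{n-1,i}^T(q),
\end{equation*}
so the first step is to determine which coefficients $\alpha_{\lambda,i}$ are nonzero when $\lambda = 2, 1^{n-2}$.

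The second step is to apply Lemma \ref{lem:character_sum} with $k = 2$, which gives $\alpha_{\lambda,i} = \binom{n-i-1}{1-i}$. This binomial coefficient vanishes for $i \geq 2$, and for the two surviving cases it yields $\alpha_{\lambda,0} = n-1$ and $\alpha_{\lambda,1} = 1$. Consequently,
\begin{equation*}
c_{\lambda,n-1}^{\sL_T^q}(q) = (n-1)\, a_{n-1,0}^T(q) + 2\, a_{n-1,1}^T(q).
\end{equation*}

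The third step unpacks the definition $a_{n-1,j}^T(q) = \sum_{B \subseteq [n],\ |B|=n-1} a_{B,j}^T(q)$. Since the $(n-1)$-subsets of $[n]$ are exactly the sets $B_i = [n] - \{i\}$ for $i \in [n]$, we obtain
\begin{equation*}
c_{\lambda,n-1}^{\sL_T^q}(q) = \sum_{i=1}^n \bigl[(n-1)\, a_{B_i,0}^T(q) + 2\, a_{B_i,1}^T(q)\bigr].
\end{equation*}

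The fourth and final step is to invoke Lemma \ref{lem:alternate_def}, which identifies the bracketed expression as exactly $\ssM_{q^2}^T(i)$, completing the proof. There is no genuine obstacle here; the work was done already in Lemma \ref{lem:alternate_def} (the combinatorial interpretation of vertex moments via $B$-orientations) and in Lemma \ref{lem:character_sum} (the evaluation of $\alpha_{\lambda,i}$ for hook shapes). The only thing to verify carefully is the arithmetic of the binomial coefficient $\binom{n-i-1}{1-i}$ for the partition $2, 1^{n-2}$, which cleanly picks out precisely the two terms $i=0,1$ needed to match the two summands in the expression for $\ssM_{q^2}^T(i)$.
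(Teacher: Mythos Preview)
Your proof is correct and follows essentially the same approach as the paper: both apply Corollary \ref{cor:coeff_interpret} together with Lemma \ref{lem:character_sum} at $k=2$ to reduce $c_{\lambda,n-1}^{\sL_T^q}(q)$ to $(n-1)a_{n-1,0}^T(q) + 2a_{n-1,1}^T(q)$, and then sum the identity of Lemma \ref{lem:alternate_def} over all $(n-1)$-subsets $B_i = [n]-\{i\}$. The only difference is that you spell out the binomial evaluation $\alpha_{\lambda,i} = \binom{n-i-1}{1-i}$ explicitly, while the paper leaves this implicit.
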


\begin{proof}
Summing \eqref{eqn:moment} over all $B$ with cardinality $n-1$, we get 
\begin{equation*}
\sum\limits_{i=1}^{n} \ssM_{q^2}^T(i)  = (n-1)a_{n-1,0}^T(q) + 2 a_{n-1,1}^T(q) 
  =  c_{\lambda,n-1}^{\sL_T^q}(q)
\end{equation*}

where the last equality follows from Corollary \ref{cor:coeff_interpret} 
and Lemma \ref{lem:character_sum} with $k=2$.  The proof is complete.
\end{proof}

On setting $q=1$ in Theorem \ref{thm:algebr_inv}, we recover
Merris' result \cite[Theorem 6]{merris-second-imm-polynom}.
From Theorem \ref{thm:main} and Theorem \ref{thm:algebr_inv}, 
we get the following.

\begin{theorem}
\label{thm:moment_gts}
Let $T_1$ and $T_2$ be trees with $n$ vertices and let $T_2$ cover $T_1$ in $\GTS_n$.  
Then, $$ \sum\limits_{i=1}^{n} \ssM_{q^2}^{T_2}(i)  \leq \sum\limits_{i=1}^{n} \ssM_{q^2}^{T_1}(i).$$
\end{theorem}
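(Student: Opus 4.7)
The plan is to obtain the statement as a one-step corollary of Theorem \ref{thm:main} combined with Theorem \ref{thm:algebr_inv}. First, I would specialize Theorem \ref{thm:main} to the hook partition $\lambda = 2, 1^{n-2} \vdash n$ and to the coefficient index $r = n-1$. Since $T_2$ covers $T_1$ in $\GTS_n$ by hypothesis, Theorem \ref{thm:main} immediately yields
\[
c_{\lambda, n-1}^{\sL_{T_1}^q}(q) - c_{\lambda, n-1}^{\sL_{T_2}^q}(q) \in \RR^+[q^2].
\]

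Next, I would invoke Theorem \ref{thm:algebr_inv} twice, once for each of $T_1$ and $T_2$: for this particular partition $\lambda = 2, 1^{n-2}$, that theorem identifies $c_{\lambda, n-1}^{\sL_T^q}(q)$ with $\sum_{i=1}^{n} \ssM_{q^2}^T(i)$. Substituting these two identifications into the display above converts it directly into
\[
\sum_{i=1}^{n} \ssM_{q^2}^{T_1}(i) - \sum_{i=1}^{n} \ssM_{q^2}^{T_2}(i) \in \RR^+[q^2].
\]
Since any polynomial lying in $\RR^+[q^2]$ has non-negative coefficients and is therefore non-negative at every real value of $q$, the claimed inequality $\sum_{i=1}^{n} \ssM_{q^2}^{T_2}(i) \leq \sum_{i=1}^{n} \ssM_{q^2}^{T_1}(i)$ follows at once.

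There is no real obstacle in this step. All the combinatorial substance sits in the earlier $B$-orientation injections of Lemmas \ref{lem:1_and_k_not_in_O}, \ref{B'_and_B_easy_case}, \ref{lem:mapping_B_to_B'}, and \ref{lem:1_and_k_in_O} together with Corollary \ref{cor:mix_1_and_k} (which feed Theorem \ref{thm:main}), and in the algebraic identification $\ssM_{q^2}^T(i) = (n-1) a_{[n]-\{i\},0}^T(q) + 2 a_{[n]-\{i\},1}^T(q)$ of Lemma \ref{lem:alternate_def} (which feeds Theorem \ref{thm:algebr_inv}). The present theorem is therefore a clean two-line corollary of those two preceding results, requiring no additional combinatorial argument or identity.
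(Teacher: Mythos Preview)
Your proposal is correct and matches the paper's approach exactly: the paper states Theorem \ref{thm:moment_gts} immediately after the sentence ``From Theorem \ref{thm:main} and Theorem \ref{thm:algebr_inv}, we get the following,'' giving no further argument. Your two-step deduction (specialize Theorem \ref{thm:main} to $\lambda = 2,1^{n-2}$, $r=n-1$, then substitute via Theorem \ref{thm:algebr_inv}) is precisely what the authors intend.
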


Theorem \ref{thm:moment_gts} implies that the sum of
the vertex moments decreases as we go up on the poset $\GTS_n$.  
We next show that the minimum value of the $q^2$-analogue of 
vertex moments also decreases as we go up on $\GTS_n$.

\begin{lemma}
\label{lem:min_moment}
Let $T_1$ and $T_2$ be two trees with vertex set $[n]$ such that  $T_2$ covers  $T_1$ in ${\GTS_n}$.  
Then, for all $q\in \RR$, 
we have $\min_{i\in [n]} \ssM_{q^2}^{T_2}(i)\leq \min_{j\in [n]} \ssM_{q^2}^{T_1}(j).$ 
\end{lemma}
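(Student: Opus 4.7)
My plan is to reduce Lemma \ref{lem:min_moment} to a multiset comparison of tree distances.  Inside the proof of Lemma \ref{lem:alternate_def} one derives the identity
\[
a_{[n]-\{i\},1}^{T}(q) \;=\; q^2 \sum_{j\neq i} [d^T_{i,j}-1]_{q^2},
\]
where $d^T_{i,j}$ denotes the distance between $i$ and $j$ in $T$.  Combined with $\ssM_{q^2}^T(i)=(n-1)+2\,a_{[n]-\{i\},1}^T(q)$, it suffices to exhibit, for each vertex $v'\in[n]$ of $T_1$, a vertex $v\in[n]$ of $T_2$ such that $\sum_{j\neq v}[d^{T_2}_{v,j}-1]_{q^2}\leq \sum_{j\neq v'}[d^{T_1}_{v',j}-1]_{q^2}$ coefficient-wise in $\RR^+[q^2]$.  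Applying this to any $v'$ attaining $\min_{j\in[n]} \ssM_{q^2}^{T_1}(j)$ at the given real $q$ then yields $\min_{i\in[n]}\ssM_{q^2}^{T_2}(i)\leq \ssM_{q^2}^{T_2}(v)\leq \ssM_{q^2}^{T_1}(v')$, the desired inequality.

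Using the labelling of Remark \ref{rem:label} (path $P_k=\{1,\ldots,k\}$, with $X$ hanging off vertex $1$ and $Y$ hanging off vertex $k$ in $T_1$; in $T_2$ the branch $Y$ relocates to vertex $1$ and vertex $k$ becomes a leaf), I would set
\[
  v \;=\; \begin{cases} v' & \text{if } v'\notin P_k,\text{ or }v'\in P_k\text{ with }v'\leq (k+1)/2,\\ k+1-v' & \text{if }v'\in P_k\text{ with }v'>(k+1)/2.\end{cases}
\]
In particular $v'=k$ pairs with $v=1$, and the midpoint of $P_k$ (when $k$ is odd) is a fixed point of the map.

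The main step is to compare the two distance families on the three blocks $P_k$, $X$, $Y$ of $[n]$.  When $v=v'$: the $P_k$- and $X$-distances coincide in $T_1$ and $T_2$ since neither subtree is altered by the GTS move; for $j\in Y$ the $T_2$-distance equals $d^{T_2}_{v,1}+d^{T_1}_{k,j}$ while the $T_1$-distance equals $d^{T_1}_{v,k}+d^{T_1}_{k,j}$, and the hypothesis $v'\leq(k+1)/2$ (or $v'\in X\cup Y\cup\{1\}$) ensures the former is no larger than the latter; in the special case $v'\in Y$ the $Y$-distances are equal and the $P_k$-distances form the common multiset $\{d^{T_1}_{k,v'},d^{T_1}_{k,v'}+1,\ldots,d^{T_1}_{k,v'}+k-1\}$.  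When $v=k+1-v'$: the involution $j\mapsto k+1-j$ on $P_k$ shows $\{|v-j|:j\in P_k\}=\{|v'-j|:j\in P_k\}$ as multisets; the $Y$-distances match exactly via $(v-1)+d^{T_1}_{k,j}=(k-v')+d^{T_1}_{k,j}$; and the $X$-distance $(v-1)+d^{T_1}_{1,j}$ in $T_2$ is strictly less than $(v'-1)+d^{T_1}_{1,j}$ in $T_1$ because $v-1=k-v'<v'-1$.  In every case, sorting yields a pointwise domination of the $T_2$-distance multiset from $v$ by the $T_1$-distance multiset from $v'$.

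Since $[d-1]_{q^2}$ is coefficient-wise non-decreasing in the integer $d$ as an element of $\RR^+[q^2]$, pairing the two sorted lists of distances term-by-term delivers the required polynomial inequality.  The principal obstacle is the reflection case $v=k+1-v'$, where the vertex-wise inequality $d^{T_2}_{v,j}\leq d^{T_1}_{v',j}$ genuinely fails for some $j$ (typically those on the $k$-side of $P_k$), so one must work at the level of sorted multisets and verify that the $P_k$-involution, the arithmetic identity on $Y$, and the strict $X$-inequality together produce pointwise dominance.
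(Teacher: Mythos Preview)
Your proposal is correct and follows essentially the same approach as the paper: pick the minimizing vertex $v'$ in $T_1$ and exhibit a comparison vertex $v$ in $T_2$ (either $v'$ itself, or its reflection $k+1-v'$ on the path $P_k$) whose moment is no larger, by comparing the two families of distances block by block on $X$, $Y$, and $P_k$.

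The differences are in presentation rather than substance. You invoke the degree-free expression $\ssM_{q^2}^T(i)=(n-1)+2q^2\sum_{j\neq i}[d_{i,j}-1]_{q^2}$ from Lemma~\ref{lem:alternate_def} explicitly, which cleanly removes the degree weights (these change at vertices $1$ and $k$ under the GTS move) and reduces everything to pure distance multisets; the paper implicitly relies on the same reduction but phrases the justification more tersely as ``$d^{T_1}_{x,y}\geq d^{T_2}_{x,y}$ for $(x,y)\in X\times Y$.'' You are also more careful about the cases where pointwise comparison fails (e.g.\ $v'\in Y$ and $j\in P_k$, or the reflection case), explicitly noting that one must match distances via the involution $j\mapsto k+1-j$ on $P_k$ and then use monotonicity of $[d-1]_{q^2}$; the paper asserts the analogous inequalities $\ssM_{q^2}^{T_1}(l)\geq \ssM_{q^2}^{T_2}(l)$ and $\ssM_{q^2}^{T_1}(l)\geq \ssM_{q^2}^{T_2}(k+1-l)$ without spelling out this multiset step. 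Your threshold $v'\le (k+1)/2$ versus $v'>(k+1)/2$ is the natural one and agrees (up to the harmless overlap at the midpoint) with the paper's split at $\lfloor k/2\rfloor$. One small remark: since in every case you actually produce a bijection $\phi$ from $[n]\setminus\{v\}$ to $[n]\setminus\{v'\}$ with $d^{T_2}_{v,j}\le d^{T_1}_{v',\phi(j)}$, the passage through ``sorted multisets'' is not strictly needed---summing $[d-1]_{q^2}$ over the bijection already gives the inequality.
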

\begin{proof} 
Let $l\in [n]$ be the vertex in $T_1$ with  
$\ssM_{q^2}^{T_1}(l)=\min_{i\in [n]} \ssM_{q^2}^{T_1}(i)$. Let
$l\in X\cup Y\cup P_{\lfloor k / 2\rfloor}$ (see Figure 
\ref{fig:gts_example} for $X$,  $Y$ and $P_k=P_{x,y}$).
Here $P_{\lfloor k/2 \rfloor}$ is the path $P_k$ restricted
to the vertices $1,2,\ldots, \lfloor k/2 \rfloor$.
Then, using 
the fact that the distance $d^{T_1}_{x,y}  \geq d^{T_2}_{x,y}$ 
for all pairs  $(x,y)\in X\times Y$, we have  
$$\ssM_{q^2}^{T_1}(l)\geq \ssM_{q^2}^{T_2}(l)\geq \min_{i\in [n]} \ssM_{q^2}^{T_2}(i).$$
If $l\geq \lfloor k / 2\rfloor$ then  
$\ssM_{q^2}^{T_1}(l)\geq \ssM_{q^2}^{T_2}(k+1-l)\geq \min_{i\in [n]} \ssM_{q^2}^{T_2}(i).$ 
Thus we can find a vertex $i$ in $T_2$ such that 
$\ssM_{q^2}^{T_1}(l)\geq \ssM_{q^2}^{T_2}(i)$, completing the proof. 
\end{proof}

\begin{corollary}
  \label{cor:min_moment}
Let $T_1,T_2$ be two trees on $n$ vertices with $T_2\geq_{\GTS_n} T_1$. 
Then, for all $q \in \RR$, the minimum $q^2$-analogue of the vertex moments
of $T_2$ is less than the minimum $q^2$-analogue of the vertex 
moments of $T_1$.
\end{corollary}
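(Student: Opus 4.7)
(Of Corollary \ref{cor:min_moment})
The plan is to reduce to Lemma \ref{lem:min_moment} by iterating along a saturated chain. Since $\GTS_n$ is a finite poset and $T_2 \geq_{\GTS_n} T_1$, there exists a saturated chain $T_1 = S_0 <_{\GTS_n} S_1 <_{\GTS_n} \cdots <_{\GTS_n} S_h = T_2$ in which each $S_{j+1}$ covers $S_j$. By Lemma \ref{lem:min_moment} applied to the covering pair $(S_j, S_{j+1})$, we have, for every $q \in \RR$,
\[
\min_{i \in [n]} \ssM_{q^2}^{S_{j+1}}(i) \; \leq \; \min_{i \in [n]} \ssM_{q^2}^{S_j}(i).
\]
Chaining these inequalities from $j=0$ up to $j=h-1$ yields
\[
\min_{i \in [n]} \ssM_{q^2}^{T_2}(i) \; \leq \; \min_{i \in [n]} \ssM_{q^2}^{T_1}(i),
\]
which is exactly the claim. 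The only subtle point is to verify that a saturated chain actually exists, but this is immediate as $\GTS_n$ is finite: starting from $T_1$ and walking up by covers (which must eventually terminate at $T_2$, since every strict inequality in a finite poset is witnessed by a finite chain of covers) produces such a sequence. No new combinatorial argument beyond Lemma \ref{lem:min_moment} is needed, so I do not anticipate any obstacle; the proof is a one-line telescoping.
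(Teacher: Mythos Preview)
Your proposal is correct and matches the paper's intended argument: the corollary is stated without proof in the paper, as it follows immediately from Lemma~\ref{lem:min_moment} by iterating along a saturated chain of covers, exactly as you do.
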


An identical statement about the maximum $q^2$-analogue of vertex 
moments is not true as shown in the following example.
\begin{example}
\label{example:max_centroid}
Let $T_1,T_2$ be trees on the vertex set $[8]$ given in Figure \ref{fig:moments_at_q=1}. 
Vertices $2$ and $3$ are both centroid vertices in $T_1$, 
while in $T_2$, the centroid is vertex $1$. 
The $q^2$-analogue of their vertex moments are as follows:
$\ssM_{q^2}^{T_1}(2)=\ssM_{q^2}^{T_1}(3)=9+2q^2(7+3q^2) 
\mbox{ \rm{ and } } \ssM_{q^2}^{T_2}(1)=9+2q^2(2+q^2).$ 
The $q^2$-analogue of vertex moments of leaf vertices of $T_1$ and $T_2$ are as follows.
\begin{eqnarray*}
\ssM_{q^2}^{T_1}(i) & = & 
9+2q^2(8+5q^2+4q^4+3q^6) \mbox{ \rm{ for  } } i=5,6,7,8,9,10. \\
\ssM_{q^2}^{T_2}(i) & = & 
9+2q^2(8+2q^2+q^4) \mbox{ \rm{ for  } } i=5,6,7,8,9,10. \\
\ssM_{q^2}^{T_2}(4) & = & 
9+2q^2(8+7q^2+6q^4).
\end{eqnarray*}
When $q=1$, the moments of vertices of $T_1$ and $T_2$ 
are given in Figure \ref{fig:moments_at_q=1} alongside the vertices.
Clearly, when $q=1$, 
$\max_{j\in [8]}^{ } \ssM_{q^2}^{T_2}(j)=51 \not\leq 49=\max_{j\in [8]}^{ }\ssM_{q^2}^{T_1}(j).$
\begin{figure}[h]
\centerline{\includegraphics[scale=0.6]{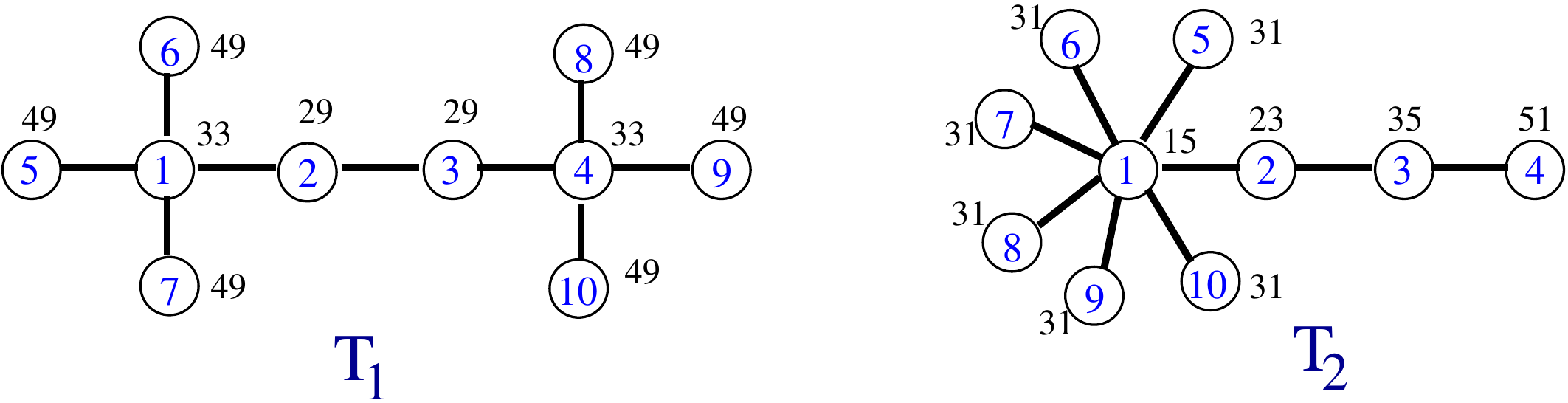}}
\caption{$q^2$-moments of vertices of $T_1$ and $T_2$ when $q=1$.}
\label{fig:moments_at_q=1}
\end{figure} 
\end{example}

Associated to a  tree are different notions of ``median'' and ``generalized
centers'', see the book \cite{kaul-mulder-book}.
It would be nice to see the behaviour of these parameters
as one goes up $\GTS_n$.

\section{$q,t$-Laplacian $\sL_{q,t}$ and Hermitian Laplacian of a tree $T$}
\label{sec:qt_laplacian}

All our results work for the bivariate Laplacian matrix $\sL_{q,t}$ 
of a tree $T$ on $n$ vertices defined as follows.  Let $T$ be a tree with edge
set $E$.  Replace each edge $e = \{u,v\}$ by two bidirected arcs, 
$(u,v)$ and $(v,u)$.  Assign one of the arcs, say $(u,v)$ a variable 
weight $q$ and its reverse arc, a variable weight $t$ and let 
$A_{n \times n} = (a_{i,j})_{1 \leq i,j \leq n}$ be the matrix
with $a_{u,v} = q$ and $a_{v,u} = t$.  Assign $a_{u,v} = 0$ if 
$\{u,v\} \not \in E$.
Let $D_{n \times n} = (d_{i,j})$ be 
the diagonal matrix with entries $d_{i,i} = 1+qt(\deg(i)-1)$.
Define $\sL_{q,t} = D - A$.  Note that when $q=t$, $\sL_{q,t} = \sL_T^q$
and that when $q=t=1$, $\sL_{q,t} = L_T$ where $L_T$ is the usual
combinatorial Laplacian matrix of $T$.

It is easy to see that our proof relies on the fact that the difference
in the coefficients of the immmanantal polynomial is a non-negative 
combination of the $a_{r,i}^T(q)$'s which are polynomials in $q^2$ and
that $q^2 \geq 0$ for all $q \in \RR$.  When $B = [n]$, bivariate
versions of $m_{n,j}(q,t)$ and $a_{n,i}^T(q,t)$ were defined in 
\cite{mukesh-siva-hook}.  Define bivariate versions $m_{r,j}(q,t)$ 
and $a_{r,i}^T(q,t)$ as done in Section \ref{sec:Bmatchings}
but replace all occurrences of $q^2$ with $qt$.

\comment{
\red{
\begin{remark}
Let $T$ be a tree on $n$ vertices with $q,t$-Laplacian  $\sL_{q,t}$. Then, 
 for $0\leq r \leq n$,
the coefficient of $(-1)^rx^{n-r}$ in the immanantal polynomial of $\sL_{q,t}$ 
indexed by $\lambda\vdash n$ equals $\sum_{i=0}^{\rhalf}2^ia_{r,i}^T(q,t) 
\alpha_{\lambda,i}^{}$.
\end{remark}
}}

With this definition, it is simple to see that all results go through for $\sL_{q,t}$, the 
$q,t$-Laplacian of $T$ whenever $q,t \in \RR$ and $qt \geq 0$ or
$q,t \in \CC$ and $qt \geq 0$.   One special case of $\sL_{q,t}$ is 
obtained when we set $q=\imath$ and $t=-\imath$ where 
$\imath = \sqrt{-1}$. In this
case, the weighted adjacency matrix becomes the Hermitian adjacency 
matrix of $T$ with edges oriented in the direction of the 
arc labelled $q$.  The Hermitian adjacency matrix is a matrix
defined and studied by Bapat, Pati and Kalita 
\cite{bapat-pati-kalita_weighted_digraphs} and later independently
by Liu and Li \cite{liu-li-hermitian-adj}
and by Guo and Mohar \cite{guo-mohar-hermitian-adj}.	
With these complex numbers as weights, $\sL_{q,t}$ reduces to what is defined as the 
Hermitian Laplacian of $T$ by Yu and Qu \cite{yu-qu-hermitian-Laplacian}.
We get the following corollary of Theorem \ref{thm:main}.

\begin{corollary}
  Let $T_1, T_2$ be trees on $n$ vertices with $T_2 \geq_{\GTS_n} T_1$. 
Then, in absolute value, the coefficients of the immanantal polynomials 
of the Hermitian Laplacian of $T_1$ are larger than the 
corresponding coefficient of the immanantal polynomials of
the Hermitian Laplacian of $T_2$. 
\end{corollary}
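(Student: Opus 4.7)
The plan is to view the Hermitian Laplacian of a tree $T$ as the specialization $q = \imath$, $t = -\imath$ of the bivariate $q,t$-Laplacian $\sL_{q,t}$ defined in Section \ref{sec:qt_laplacian}, and to rerun the entire proof of Theorem \ref{thm:main} in this bivariate setting. Since $qt = 1$ under this specialization, polynomials in $\RR^+[qt]$ evaluate to non-negative reals, which is exactly what is needed.

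Concretely, I would first redefine the weights $\wt_{B,M}(q,t)$, $m_{B,j}(q,t)$, and $a_{r,i}^T(q,t)$ by replacing every occurrence of $q^2$ in Section \ref{sec:Bmatchings} with $qt$. The statements and proofs of Lemma \ref{lem:coeff_in_m_rj}, Lemma \ref{lem:reln_aiq_mjq}, and Corollary \ref{cor:coeff_interpret} transport verbatim, giving
\[
c_{\lambda,r}^{\sL_{q,t}}(q,t) \;=\; \sum_{i=0}^{\lfloor r/2 \rfloor} \alpha_{\lambda,i}^{ }\, 2^i\, a_{r,i}^T(q,t),
\]
where each $a_{r,i}^T(q,t) \in \RR^+[qt]$ (except possibly $a_{[n],0}^T(q,t) = 1 - qt$, which contributes only to the partition $\lambda = 1^n$ and the single coefficient $r = n$, a case that is excluded by a computation analogous to the one in Corollary \ref{cor:smaller_dets}).

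Next, I would observe that the five injective maps constructed in Lemmas \ref{lem:1_and_k_not_in_O}, \ref{B'_and_B_easy_case}, \ref{lem:mapping_B_to_B'}, \ref{lem:1_and_k_in_O} and Corollary \ref{cor:mix_1_and_k} are purely combinatorial: they manipulate $B$-orientations while preserving the $\awy$ statistic, and use no algebraic property of $q^2$ beyond its role as the bookkeeping exponent in the generating function. Therefore the very same injections establish $a_{r,i}^{T_1}(q,t) - a_{r,i}^{T_2}(q,t) \in \RR^+[qt]$ for all $r$ and $i$, which, by Corollary \ref{cor:coeff_interpret} (bivariate version), gives $c_{\lambda,r}^{\sL_{T_1}^{q,t}}(q,t) - c_{\lambda,r}^{\sL_{T_2}^{q,t}}(q,t) \in \RR^+[qt]$.

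Finally I would specialize $q = \imath$, $t = -\imath$, so that $qt = 1$; every polynomial in $\RR^+[qt]$ then evaluates to a non-negative real number. Since the Hermitian Laplacian is a Hermitian matrix, its immanantal polynomial has real coefficients, so $c_{\lambda,r}^{\sL_{T_i}^{q,t}}(\imath,-\imath)$ is exactly the absolute value of the coefficient of $x^{n-r}$ in the $\lambda$-immanantal polynomial. We conclude that this absolute value is at least as large for $T_1$ as for $T_2$, for every $\lambda \vdash n$ and every $0 \leq r \leq n$. The only place requiring any care is verifying that the case $a_{[n],0}^T(q,t) = 1 - qt$ is compatible with the specialization; at $qt = 1$ it contributes $0$, so it cannot disturb the inequality. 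The essentially trivial point that the proof hinges on is that the entire combinatorial machinery of Section \ref{sec:coeff_imman_poly} was $q^2$-neutral and adapts to any evaluation with $qt \geq 0$.
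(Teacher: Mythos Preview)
Your proposal is correct and follows essentially the same route as the paper: the paper also passes to the bivariate $q,t$-Laplacian, observes that the entire machinery of Sections~\ref{sec:Bmatchings}--\ref{sec:coeff_imman_poly} goes through with $q^2$ replaced by $qt$, and then specializes to $q=\imath$, $t=-\imath$ (so $qt=1\ge 0$) to obtain the Hermitian Laplacian. One small remark: your sentence ``since the Hermitian Laplacian is Hermitian, its immanantal polynomial has real coefficients, so $c_{\lambda,r}$ is the absolute value\ldots'' only justifies that $c_{\lambda,r}$ is real; the non-negativity (hence equality with the absolute value) actually comes from the formula $c_{\lambda,r}=\sum_i \alpha_{\lambda,i}2^i a_{r,i}^T(q,t)$ with $\alpha_{\lambda,i}\ge 0$ and $a_{r,i}^T\in\RR^+[qt]$, which you already have in hand.
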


Let $T$ be a tree on $n$ vertices 
with Laplacian $L_T$ and $q,t$-Laplacian $\sL_{q,t}$.
When $q=z\in \CC$ with $z\neq 0$, and $t=1/q$ 
then it is simple to see  that the matrix $\sL_{q,t}$ need not be Hermitian. 
In this case, for all $i\geq 0$, we have $a_{r,i}^T(q)_{q=1}^{}=a_{r,i}^T(z,1/z)$. 
This implies that  
for all $\lambda \vdash n$ and for $0\leq r \leq n$,
$c_{\lambda,r}^{\sL_{q,t}}=c_{\lambda,r}^{L_{T}}$.
Thus, we obtain the following  simple corollary.  
\begin{corollary}
Let $T$ be a tree on $n$ vertices with Laplacian $L_T$ and $q,t$-Laplacian  $\sL_{q,t}$.
Then, for all  $z\in \CC$ with $z\neq 0$ and for all $\lambda \vdash n$
$$f_{\lambda}^{\sL_{z,1/z}}(x)=f_{\lambda}^{L_T}(x).$$
\end{corollary}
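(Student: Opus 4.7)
The plan is to reduce the claim to the observation that the combinatorial weights entering the bivariate counterparts of $a_{r,i}^T$ depend only on the product $qt$, so the substitution $q=z,\,t=1/z$ collapses everything to the $q=1$ specialization of the univariate theory.

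First, I would record the bivariate version of Corollary \ref{cor:coeff_interpret}. By the definition of $m_{r,j}(q,t)$ and $a_{r,i}^T(q,t)$, which are obtained from $m_{r,j}(q)$ and $a_{r,i}^T(q)$ by replacing every $q^2$ by $qt$, the entire computation in Section \ref{sec:Bmatchings} carries through verbatim. In particular, one has
\begin{equation*}
c_{\lambda,r}^{\sL_{q,t}} \;=\; \sum_{i=0}^{\rhalf} \alpha_{\lambda,i}^{} \, 2^i \, a_{r,i}^T(q,t).
\end{equation*}
This is the bivariate counterpart already alluded to in Section \ref{sec:qt_laplacian}, and its proof is the same as that of Corollary \ref{cor:coeff_interpret}, using only that a bidirected arc contributes a factor $qt$ (one arc labelled $q$ and its reverse labelled $t$) and that the diagonal entries of $\sL_{q,t}$ have the form $1+qt(d_v-1)$.

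Next I would make the substitution $q=z$, $t=1/z$ and observe that $qt=1$. Since $a_{r,i}^T(q,t)$ is a polynomial in the single variable $qt$, it follows that
\begin{equation*}
a_{r,i}^T(z,1/z) \;=\; a_{r,i}^T(q,t)\bigr|_{qt=1} \;=\; a_{r,i}^T(q)\bigr|_{q=1},
\end{equation*}
matching the paragraph preceding the corollary. Plugging this into the displayed identity gives
\begin{equation*}
c_{\lambda,r}^{\sL_{z,1/z}} \;=\; \sum_{i=0}^{\rhalf} \alpha_{\lambda,i}^{} \, 2^i \, a_{r,i}^T(q)\bigr|_{q=1} \;=\; c_{\lambda,r}^{\sL_T^q}\bigr|_{q=1} \;=\; c_{\lambda,r}^{L_T},
\end{equation*}
where the last equality uses the (obvious) fact that $\sL_T^q|_{q=1}=L_T$.

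Since this equality of coefficients holds for every $0 \le r \le n$, the immanantal polynomials coincide:
\begin{equation*}
f_{\lambda}^{\sL_{z,1/z}}(x) \;=\; \sum_{r=0}^{n} (-1)^r c_{\lambda,r}^{\sL_{z,1/z}} \, x^{n-r} \;=\; \sum_{r=0}^{n} (-1)^r c_{\lambda,r}^{L_T} \, x^{n-r} \;=\; f_{\lambda}^{L_T}(x),
\end{equation*}
which is the desired identity. The only subtle step is verifying that the combinatorial quantity $a_{r,i}^T(q,t)$ really is a function of the product $qt$ alone; but this is immediate from the definitions, since bidirected arcs come in $q,t$ pairs and the "free" vertex weights $1+qt(d_v-1)$ are manifestly symmetric in $q$ and $t$. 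There is no serious obstacle beyond this verification.
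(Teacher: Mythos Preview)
Your proposal is correct and follows essentially the same route as the paper: the paper observes in the paragraph preceding the corollary that $a_{r,i}^T(z,1/z)=a_{r,i}^T(q)|_{q=1}$ (since the bivariate $a_{r,i}^T(q,t)$ is by definition obtained by replacing every $q^2$ with $qt$), whence $c_{\lambda,r}^{\sL_{z,1/z}}=c_{\lambda,r}^{L_T}$ for all $\lambda$ and $r$. Your write-up simply spells out the intermediate step through the bivariate version of Corollary~\ref{cor:coeff_interpret} and the coefficient-by-coefficient comparison, which is exactly what the paper leaves implicit.
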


\section{Exponential distance matrices of a tree} 
\label{sec:expon_dist_mat}

In \cite{bapat-lal-pati}, Bapat, Lal and Pati introduced the 
exponential distance matrix $\ED_T$ of a tree $T$.
In this section, we prove that when $q \not= \pm 1$, 
the coefficients
of the characteristic polynomial of $\ED_T$, in absolute 
value decrease when we go up $\GTS_n$.  
We show a similar relation on immanants of $\ED_T$ indexed
by partitions with two columns.
We recall the definition of $\ED_T$ from \cite{bapat-lal-pati}.
Let $T$ be a tree with $n$ vertices.    
Then, its exponential distance matrix 
$\ED_T=(e_{i,j})_{1\leq i,j\leq n}$ is defined as follows: 
  the entry $e_{i,j}=1$ if $i=j$ and $e_{i,j}=q^{d_{i,j}}$ if $i\neq j$, 
  where $d_{i,j}$ is the distance between vertex $i$ and vertex $j$ in $T$.  
For $\lambda\vdash n$, define 
\begin{equation}
\label{eq:def_char_poly_ED}
f_{\lambda}^{\ED_{T}}(x) = d_{\lambda}(xI -\ED_{T}) = \sum_{r=0}^n (-1)^r c_{\lambda,r}^{\ED_{T}}(q) x^{n-r}.
\end{equation}

We need the following lemma of Bapat, Lal and Pati \cite{bapat-lal-pati}. 

\begin{lemma}[Bapat, Lal and Pati]
\label{lem:bapat_E_inverse}
Let $T$ be a tree with $n$ vertices. 
Let $\sL_{T}^q$ and $\ED_T$ be the $q$-Laplacian 
and exponential distance matrix of $T$ respectively. 
Then, $\det(\ED_T) = (1-q^2)^{n-1}$ and 
if $q\neq \pm 1$, then 
$$\ED_T^{-1}=\frac{1}{1-q^2} \sL_{T}^q.$$
\end{lemma}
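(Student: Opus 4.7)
The plan is to prove both assertions by computing the product $\sL_T^q \cdot \ED_T$ entrywise and showing that $\sL_T^q \cdot \ED_T = (1-q^2) I$. From this single identity, both the determinant value and the inverse formula (when $q \neq \pm 1$) will fall out.

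First I would fix vertices $i,j$ and expand $\sum_k (\sL_T^q)_{i,k} (\ED_T)_{k,j}$. Using $\sL_T^q = I + q^2(D-I) - qA$, the only $k$ contributing are $k=i$ and the neighbors $k \sim i$. The crucial tree-structural fact I would invoke is: for $i \neq j$, exactly one neighbor $k^*$ of $i$ lies on the unique $i$-to-$j$ path, with $d_{k^*,j} = d_{i,j} - 1$, while every other neighbor $k$ of $i$ satisfies $d_{k,j} = d_{i,j}+1$ (this is where we use that $T$ is a tree, so paths are unique and any detour through a non-path neighbor strictly increases the distance). With this, the diagonal entry ($i=j$) computes to
\[
[1 + q^2(d_i-1)] \cdot 1 + d_i \cdot (-q) \cdot q = 1 - q^2,
\]
and the off-diagonal entry becomes
\[
[1+q^2(d_i-1)] q^{d_{i,j}} - q \cdot q^{d_{i,j}-1} - (d_i-1)\, q \cdot q^{d_{i,j}+1},
\]
which telescopes to $0$. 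Hence $\sL_T^q \ED_T = (1-q^2) I$.

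For the determinant of $\ED_T$ the cleanest route is a short induction on $n$. Pick a leaf $v$ of $T$ with unique neighbor $u$, and subtract $q$ times row $u$ of $\ED_T$ from row $v$. Since $d_{v,w} = d_{u,w} + 1$ for every $w \neq v$, the transformed row $v$ has a single nonzero entry $1-q^2$ in the diagonal position. Expanding along this row gives $\det(\ED_T) = (1-q^2) \det(\ED_{T - v})$, and induction yields $\det(\ED_T) = (1-q^2)^{n-1}$. Finally, when $q \neq \pm 1$ we have $1-q^2 \neq 0$, so $\ED_T$ is invertible, and the identity $\sL_T^q \ED_T = (1-q^2) I$ rearranges immediately to $\ED_T^{-1} = \frac{1}{1-q^2} \sL_T^q$.

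The main obstacle here is really just bookkeeping: correctly partitioning the neighbors of $i$ into the one on the $(i,j)$-path versus the rest, and tracking the signs of $q$ and the exponents of $q^{d_{i,j} \pm 1}$ in the off-diagonal cancellation. Once that one tree fact is in hand, the remainder of the proof is essentially a two-line calculation plus a leaf-stripping induction for the determinant.
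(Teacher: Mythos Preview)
Your proof is correct. The paper does not actually prove this lemma; it merely states it as a result of Bapat, Lal and Pati and cites their paper, so there is no proof in the paper to compare against. Your entrywise verification that $\sL_T^q \, \ED_T = (1-q^2)I$ is the natural argument, and the leaf-stripping induction for $\det(\ED_T)$ is clean and self-contained. One minor remark: once you have $\sL_T^q \, \ED_T = (1-q^2)I$, you could alternatively take determinants and use the fact (also attributed to Bapat--Lal--Pati in the paper) that $\det(\sL_T^q) = 1-q^2$ to get $\det(\ED_T) = (1-q^2)^{n-1}$ without a separate induction; but your induction avoids relying on that separate input, which is arguably cleaner.
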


Using Jacobi's Theorem on minors of the inverse of a matrix
(see DeAlba's article \cite[Section  4.2]{dealba:determinants}),
we get the following easy corollary, whose proof we omit.

\begin{corollary}
\label{cor:our_main_result_2}
Let $T$ be a tree with $n$ vertices. 
Let  $\sL_{T}^q$ and $\ED_T$ be the $q$-Laplacian 
and exponential distance matrix of $T$ respectively. Let $q\neq \pm 1$. 
Then, for $0\leq r \leq n$
 $$c_{1^n,r}^{\ED_{T}}(q)=(1-q^2)^{r-1}c_{1^{n},n-r}^{\sL_{T}^q}(q),$$
 where $c_{1^{n},n-r}^{\sL_{T}^q}(q)$ is the coefficient of 
 $(-1)^{n-r} x^{r}$ in $f_{1^n}^{\sL_{T}^q}(x).$
\end{corollary}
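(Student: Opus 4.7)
The plan is to combine three ingredients: the classical expansion of $c_{1^n,r}^{A}(q)$ as a sum of $r\times r$ principal minors of $A$, Jacobi's theorem on principal minors of the inverse, and the identities $\sL_T^q = (1-q^2)\ED_T^{-1}$ and $\det(\ED_T) = (1-q^2)^{n-1}$ supplied by Lemma~\ref{lem:bapat_E_inverse}.

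First, I would record the standard fact that the coefficient of $x^{n-r}$ in $\det(xI - A)$ equals $(-1)^r$ times the sum of the $r\times r$ principal minors of $A$. Applied to \eqref{eq:def_char_poly_ED} and to the $\lambda = 1^n$ case of \eqref{eqn:q-lapl_immanant_poly}, this yields
$$c_{1^n,r}^{\ED_T}(q) = \sum_{S \subseteq [n],\, |S|=r} \det(\ED_T[S|S]), \qquad c_{1^n,n-r}^{\sL_T^q}(q) = \sum_{S \subseteq [n],\, |S|=n-r} \det(\sL_T^q[S|S]).$$
Next, since $q \neq \pm 1$, the matrix $\ED_T$ is invertible with $\ED_T^{-1} = (1-q^2)^{-1}\sL_T^q$, so for every $S \subseteq [n]$ with $|S| = n-r$ one has $\det(\sL_T^q[S|S]) = (1-q^2)^{n-r}\det(\ED_T^{-1}[S|S])$ by scaling rows of the $(n-r)\times(n-r)$ submatrix. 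Jacobi's identity for principal minors of the inverse (the generic Jacobi sign cancels when $S = T$) combined with $\det(\ED_T) = (1-q^2)^{n-1}$ then gives $\det(\ED_T^{-1}[S|S]) = (1-q^2)^{-(n-1)}\det(\ED_T[S^c|S^c])$, and therefore
$$\det(\sL_T^q[S|S]) = (1-q^2)^{(n-r)-(n-1)}\det(\ED_T[S^c|S^c]) = (1-q^2)^{1-r}\det(\ED_T[S^c|S^c]).$$
Summing over all such $S$, equivalently over all $S^c$ of cardinality $r$, I obtain $c_{1^n,n-r}^{\sL_T^q}(q) = (1-q^2)^{1-r}\,c_{1^n,r}^{\ED_T}(q)$, which on solving for $c_{1^n,r}^{\ED_T}(q)$ produces the desired identity.

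There is no substantive obstacle here; the only care needed is in bookkeeping the two exponents, namely the $(1-q^2)^{n-r}$ arising from row scaling and the $(1-q^2)^{n-1}$ arising from $\det(\ED_T)$ in the denominator of the Jacobi identity. The hypothesis $q\neq \pm 1$ is precisely what is needed both to invert $\ED_T$ and to divide by its determinant. As sanity checks I would verify the boundary cases $r=0$ (both sides equal $1$, using $c_{1^n,n}^{\sL_T^q}(q) = \det(\sL_T^q) = 1-q^2$) and $r=n$ (both sides equal $\det(\ED_T) = (1-q^2)^{n-1}$).
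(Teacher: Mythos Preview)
Your proposal is correct and follows exactly the route the paper indicates: the paper omits the proof but explicitly says it follows from Jacobi's Theorem on minors of the inverse together with Lemma~\ref{lem:bapat_E_inverse}, and that is precisely the argument you have written out, including the principal-minor expansion of the characteristic coefficients and the careful tracking of the two powers of $(1-q^2)$.
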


The following corollary is an easy consequence of Theorem \ref{thm:main} 
and  Corollary  \ref{cor:our_main_result_2}, we omit its proof. 

\begin{corollary}
Let $T_1$ and $T_2$ be two trees with $n$ vertices 
such that $T_2 \geq_{\GTS_n} T_1$. Then, for all $q\in \RR$ 
with $q \not= \pm 1$ and for $0\leq r \leq n$,
$$\left|c_{1^n,r}^{\ED_{T_2}}(q)\right| \leq \left|c_{1^n,r}^{\ED_{T_1}}(q)\right|.$$
In particular, for an arbitrary tree $T$ with $n$ vertices,
$$\left|c_{1^n,r}^{\ED_{S_n}}(q)\right| \leq \left|c_{1^n,r}^{\ED_{T}}(q)\right| 
\leq\left|c_{1^n,r}^{\ED_{P_n}}(q)\right|.$$
\end{corollary}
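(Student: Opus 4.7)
The plan is to reduce this corollary directly to Theorem \ref{thm:main} via the algebraic bridge provided by Corollary \ref{cor:our_main_result_2}. That corollary tells us
\[
c_{1^n,r}^{\ED_{T}}(q) \;=\; (1-q^2)^{r-1}\, c_{1^{n},n-r}^{\sL_{T}^q}(q),
\]
so taking absolute values gives $|c_{1^n,r}^{\ED_{T}}(q)| = |1-q^2|^{r-1}\, |c_{1^{n},n-r}^{\sL_{T}^q}(q)|$. Since the factor $|1-q^2|^{r-1}$ depends only on $q$ and $r$ (and not on the tree), it suffices to compare $|c_{1^n,n-r}^{\sL_{T_1}^q}(q)|$ and $|c_{1^n,n-r}^{\sL_{T_2}^q}(q)|$.

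For the comparison of the $q$-Laplacian coefficients, I would first handle the edge cases by sign. When $r=0$ we have $c_{1^n,0}^{\ED_{T}}(q)=(1-q^2)^{-1}c_{1^n,n}^{\sL_T^q}(q)=1$ for every tree (using $\det(\sL_T^q)=1-q^2$), so equality holds trivially. For $1 \le r \le n$, the index $n-r$ satisfies $n-r < n$, so by Corollary \ref{cor:smaller_dets} and Remark \ref{rem:poly_q_sq} the coefficient $c_{1^n,n-r}^{\sL_T^q}(q) = a_{n-r,0}^T(q)$ is a polynomial in $q^2$ with non-negative real coefficients. Hence this quantity is itself non-negative for every real $q$, and absolute value bars can be dropped. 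Theorem \ref{thm:main} then gives
\[
c_{1^n,n-r}^{\sL_{T_1}^q}(q) - c_{1^n,n-r}^{\sL_{T_2}^q}(q) \in \RR^+[q^2],
\]
and evaluating at any real $q$ yields $c_{1^n,n-r}^{\sL_{T_1}^q}(q) \ge c_{1^n,n-r}^{\sL_{T_2}^q}(q) \ge 0$. Multiplying both sides by the non-negative factor $|1-q^2|^{r-1}$ produces the desired bound $|c_{1^n,r}^{\ED_{T_2}}(q)| \le |c_{1^n,r}^{\ED_{T_1}}(q)|$.

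For the second (``sandwich'') inequality, I would invoke Lemma \ref{lem:csikvari-prelim}: the star $S_n$ is the maximum and the path $P_n$ is the minimum element of $\GTS_n$. Thus for any tree $T$ on $n$ vertices one has $P_n \le_{\GTS_n} T \le_{\GTS_n} S_n$, and iterating the cover-relation inequality along a saturated chain from $P_n$ up to $T$, and from $T$ up to $S_n$, yields $|c_{1^n,r}^{\ED_{S_n}}(q)| \le |c_{1^n,r}^{\ED_{T}}(q)| \le |c_{1^n,r}^{\ED_{P_n}}(q)|$ for every real $q\neq\pm 1$ and every $0\le r\le n$.

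There is essentially no obstacle here beyond bookkeeping: the heavy lifting was already done in Theorem \ref{thm:main}, and the inversion formula of Bapat--Lal--Pati isolates the only subtle point, namely the $(1-q^2)^{r-1}$ prefactor which can change sign (whence the absolute values in the statement) but has the same magnitude on both sides of the inequality. The only ``case split'' worth flagging is the separation of $r=0$ from $r\ge 1$, which is forced by the fact that $c_{1^n,n}^{\sL_T^q}(q)=1-q^2$ is a tree-independent quantity and is the unique exceptional coefficient where the non-negativity-in-$q^2$ property of $a_{r,0}^T(q)$ fails.
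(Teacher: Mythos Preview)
Your proposal is correct and follows exactly the route the paper indicates: it says the corollary ``is an easy consequence of Theorem \ref{thm:main} and Corollary \ref{cor:our_main_result_2}'' and omits the proof, and you have supplied precisely that argument. Your explicit treatment of the sign issue (via Corollary \ref{cor:smaller_dets} and Remark \ref{rem:poly_q_sq} to drop the absolute values on the $q$-Laplacian side) and the separation of the $r=0$ case are careful details that the paper leaves to the reader.
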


We give some results for  the immanant $d_{\lambda}(\ED_T)$, 
when $\lambda\vdash n$ is a two column partition. 
That is $\lambda=2^k,1^{n-2k}$ with $0 \leq k \leq \nhalf.$  
When $\lambda$ is a two column partition of $n$, 
Merris and Watkins in 
\cite{merris-watkins-ineqs_and_idents}
proved the following lemma for invertible matrices.

\begin{lemma}[Merris, Watkins]
\label{lem:merris_2_col}
Let $A$ be an  invertible $n \times n$  matrix.
Then $\lambda \vdash n$ is a two column partition if and only if 
$$d_{\lambda}(A)\det(A^{-1})=d_{\lambda}(A^{-1})\det(A).$$
\end{lemma}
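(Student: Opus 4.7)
The plan is to first reformulate the identity so it becomes an identity between polynomials in the entries of $A$, eliminating the inverses. Since $A^{-1} = \det(A)^{-1}\mathrm{adj}(A)$ and the immanant $d_\lambda$ is multilinear of degree $n$, we have $d_\lambda(A^{-1}) = \det(A)^{-n}d_\lambda(\mathrm{adj}(A))$. Substituting into the stated identity and clearing denominators, the biconditional becomes equivalent to
\begin{equation*}
d_\lambda(\mathrm{adj}(A)) = \det(A)^{n-2} \, d_\lambda(A),
\end{equation*}
which is a polynomial identity in the $n^2$ entries of $A$, free of any rational expressions. All subsequent work takes place at this polynomial level.

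For the ``if'' direction, assume $\lambda = 2^k, 1^{n-2k}$. Its conjugate $\lambda' = (n-k, k)$ is a two-row partition, and one has the standard sign-twist $\chi_{\lambda'}(\sigma) = \sgn(\sigma)\,\chi_\lambda(\sigma)$. Expanding $d_\lambda(\mathrm{adj}(A))$ by the definition of the immanant, writing $(\mathrm{adj}(A))_{ij} = (-1)^{i+j}M_{ji}(A)$, and using $\prod_i(-1)^{i+\sigma(i)} = 1$, one obtains
\begin{equation*}
d_\lambda(\mathrm{adj}(A)) = \sum_{\sigma \in \SSS_n} \chi_\lambda(\sigma) \prod_{i=1}^n M_{\sigma(i), i}(A).
\end{equation*}
To compare with $\det(A)^{n-2}\, d_\lambda(A)$, I would expand $\det(A)^{n-2}$ by iterated Laplace expansions and then apply the Jacobi--Trudi expression for $\chi_{\lambda'}$ (equivalently $\chi_\lambda \cdot \sgn$) as a $2\times 2$ determinant $h_{n-k}h_k - h_{n-k+1}h_{k-1}$ in the character ring. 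The two-term structure of Jacobi--Trudi is essential here: it matches the product-of-two-minors structure produced by Laplace expansion, giving a bijective pairing of terms on each side.

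For the ``only if'' direction, I would argue contrapositively. Treating the entries of $A$ as indeterminates, the equation $d_\lambda(\mathrm{adj}(A)) - \det(A)^{n-2} d_\lambda(A) \equiv 0$ is a polynomial identity in those indeterminates. Extracting coefficients of suitably chosen monomials yields linear constraints on the values $\chi_\lambda(\sigma)$, and collecting enough such constraints forces $\chi_\lambda \cdot \sgn$ to itself be an irreducible character of $\SSS_n$. By the conjugate-partition involution, this happens precisely when $\lambda'$ has at most two rows, i.e., when $\lambda$ has at most two columns. The principal obstacle is the combinatorial bijection in the ``if'' step: matching products of $(n-1)$-sized minors from $\mathrm{adj}(A)$ against products of single entries times the $(n-2)$-fold determinant. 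The matching works cleanly precisely when $\chi_{\lambda'}$ admits a two-summand Jacobi--Trudi expansion, which characterizes two-column $\lambda$ and is what makes the hypothesis sharp.
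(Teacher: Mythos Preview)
The paper does not prove this lemma: it is quoted as a result of Merris and Watkins \cite{merris-watkins-ineqs_and_idents} and used as a black box in the proof of Lemma~\ref{lem:lapl-expo}. So there is no ``paper's own proof'' to compare against.

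That said, your proposal has a genuine gap in the ``only if'' direction. You write that the polynomial identity ``forces $\chi_\lambda \cdot \sgn$ to itself be an irreducible character of $\SSS_n$,'' and that this characterizes two-column $\lambda$. But $\chi_\lambda \cdot \sgn = \chi_{\lambda'}$ is \emph{always} an irreducible character, for every $\lambda \vdash n$; this is the standard conjugation involution on $\mathrm{Irr}(\SSS_n)$. So the conclusion you draw from your coefficient extraction cannot be the one stated, and as written the argument proves nothing. A correct ``only if'' proof must isolate a feature that genuinely distinguishes two-column partitions---for instance, by specializing $A$ to a well-chosen matrix (diagonal, or a permutation matrix plus a perturbation) for which the identity visibly fails when $\lambda$ has three or more columns.

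Your ``if'' direction is also only a sketch. The reduction to the polynomial identity $d_\lambda(\mathrm{adj}(A)) = \det(A)^{n-2}\, d_\lambda(A)$ is correct and standard, and invoking the two-term Jacobi--Trudi expansion of $\chi_{\lambda'}$ is a reasonable starting point. But the sentence ``giving a bijective pairing of terms on each side'' is exactly the content of the proof, and you have not supplied it: one side is a sum over $\sigma \in \SSS_n$ of products of $n$ cofactors of size $(n-1)\times(n-1)$, the other is a sum over $\sigma$ of products of $n$ entries times $(n-2)$ full determinants, and matching these term-by-term is not automatic. The original Merris--Watkins argument proceeds differently, via induced characters and a recursion on generalized matrix functions rather than a direct bijection.
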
 

\begin{lemma}
  \label{lem:lapl-expo}
Let $T$ be a tree with $n$ vertices with 
$q$-Laplacian and  exponential distance matrices
$\sL_T^q$ and $\ED_T$  respectively. 
Then for all $q\in \RR$ with $q\neq \pm 1$ 
and $\lambda=2^k,1^{n-2k}$ for $0\leq k\leq \nhalf$
$$d_{\lambda}(\ED_T)=d_{\lambda}(\sL_T^q)(1-q^2)^{n-2}.$$ 
\end{lemma}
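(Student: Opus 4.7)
The plan is to combine the two key facts already stated immediately above the target lemma, namely Lemma \ref{lem:bapat_E_inverse} (which gives both $\det(\ED_T) = (1-q^2)^{n-1}$ and $\ED_T^{-1} = \frac{1}{1-q^2}\sL_T^q$ when $q \neq \pm 1$) together with Lemma \ref{lem:merris_2_col} (the Merris--Watkins identity for two-column partitions). Since $\lambda = 2^k, 1^{n-2k}$ is a two-column partition and $\ED_T$ is invertible under the hypothesis $q \neq \pm 1$, Lemma \ref{lem:merris_2_col} applies directly to $A = \ED_T$.

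First I would rewrite the Merris--Watkins identity in the useful form
\begin{equation*}
d_{\lambda}(\ED_T) \;=\; d_{\lambda}(\ED_T^{-1}) \, \frac{\det(\ED_T)}{\det(\ED_T^{-1})} \;=\; d_{\lambda}(\ED_T^{-1}) \, \det(\ED_T)^2,
\end{equation*}
using $\det(\ED_T^{-1}) = \det(\ED_T)^{-1}$.

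Next I would exploit the homogeneity of the immanant: since $d_{\lambda}$ is a sum of products of one entry from each of the $n$ rows, it is homogeneous of degree $n$, so $d_{\lambda}(cM) = c^n d_{\lambda}(M)$ for any scalar $c$ and $n \times n$ matrix $M$. Applying this with $c = \tfrac{1}{1-q^2}$ and $M = \sL_T^q$ yields
\begin{equation*}
d_{\lambda}(\ED_T^{-1}) \;=\; d_{\lambda}\!\left(\tfrac{1}{1-q^2}\sL_T^q\right) \;=\; \frac{1}{(1-q^2)^n}\, d_{\lambda}(\sL_T^q).
\end{equation*}

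Finally I would substitute $\det(\ED_T)^2 = (1-q^2)^{2(n-1)}$ into the first display and combine with the second, obtaining
\begin{equation*}
d_{\lambda}(\ED_T) \;=\; \frac{1}{(1-q^2)^n}\, d_{\lambda}(\sL_T^q) \cdot (1-q^2)^{2n-2} \;=\; d_{\lambda}(\sL_T^q) \, (1-q^2)^{n-2},
\end{equation*}
which is exactly the claim. There is essentially no combinatorial or analytic obstacle here; the only subtlety worth pointing out is that Lemma \ref{lem:merris_2_col} requires invertibility of the matrix, and this is precisely why the hypothesis $q \neq \pm 1$ is needed (so that $\ED_T$ is invertible via Lemma \ref{lem:bapat_E_inverse}). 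No further case analysis or induction is required.
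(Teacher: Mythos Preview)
Your proof is correct and follows essentially the same approach as the paper: both apply the Merris--Watkins identity (Lemma \ref{lem:merris_2_col}) to $A = \ED_T$, use the inverse relation $\ED_T^{-1} = \tfrac{1}{1-q^2}\sL_T^q$ from Lemma \ref{lem:bapat_E_inverse}, invoke degree-$n$ homogeneity of immanants, and substitute the known determinant values. The only cosmetic difference is that the paper cancels the $(1-q^2)^{-n}$ factor from both sides to reach $d_{\lambda}(\ED_T)\det(\sL_T^q) = d_{\lambda}(\sL_T^q)\det(\ED_T)$ and then inserts $\det(\sL_T^q) = 1-q^2$, whereas you instead work with $\det(\ED_T)^2$; the algebra is equivalent.
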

\begin{proof}
For all $q\in \RR$ with $q\neq \pm 1$, $\ED_T$ is invertible.
By Lemma \ref{lem:merris_2_col}, we have 
$$d_{\lambda}(\ED_T)\det\left(\frac{1}{1-q^2}\sL_T^q \right) 
 =  d_{\lambda}\left(\frac{1}{1-q^2}\sL_T^q \right)\det(\ED_T).$$

 $$\mbox{Thus, } d_{\lambda}(\ED_T) \det\left(\sL_T^q \right) =  d_{\lambda} \left(\sL_T^q \right)  \det(\ED_T). $$
Hence,  
$d_{\lambda}(\ED_T)=d_{\lambda}(\sL_T^q)(1-q^2)^{n-2}$, completing
the proof.
\end{proof}

Combining Lemma \ref{lem:lapl-expo} and Theorem \ref{thm:main} 
gives us another corollary whose straightforward proof we 
again omit.

\begin{corollary}
  \label{cor:immanant_ed_t}
Let $T_1$ and $T_2$ be two trees on $n$ vertices with 
$T_2\geq_{\GTS_n}T_1$. Then, for all $q\in \RR$ with 
$q\neq \pm 1$  and for all 
$\lambda=2^k,1^{n-2k}$, we have
$$|d_{\lambda}(\ED_{T_2})|\leq |d_{\lambda}(\ED_{T_1})|.$$
\end{corollary}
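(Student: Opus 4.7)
The plan is to deduce this corollary directly from Lemma \ref{lem:lapl-expo} combined with Corollary \ref{cor:immanant_q-Laplacian}. Applying Lemma \ref{lem:lapl-expo} to each of $T_1$ and $T_2$ gives, for every two-column partition $\lambda=2^k,1^{n-2k}$ of $n$,
\[
d_{\lambda}(\ED_{T_i}) \;=\; d_{\lambda}(\sL_{T_i}^q)\,(1-q^2)^{n-2}, \qquad i=1,2.
\]
The scalar $(1-q^2)^{n-2}$ is the same on both sides and is nonzero since $q\neq\pm 1$. After taking absolute values and cancelling this common factor, the claim reduces to $|d_{\lambda}(\sL_{T_2}^q)|\leq |d_{\lambda}(\sL_{T_1}^q)|$.

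Next I would invoke Corollary \ref{cor:immanant_q-Laplacian}, which supplies the signed inequality $d_{\lambda}(\sL_{T_2}^q)\leq d_{\lambda}(\sL_{T_1}^q)$ for every $\lambda\vdash n$. To upgrade this to an absolute-value statement for two-column partitions I would split on whether $k=0$ or $k\geq 1$. If $k=0$ then $\lambda=1^n$ and $d_{1^n}(\sL_T^q)=\det(\sL_T^q)=1-q^2$ by Lemma \ref{lem:bapat_E_inverse}, a value independent of the tree $T$; hence $|d_{1^n}(\sL_{T_2}^q)|=|d_{1^n}(\sL_{T_1}^q)|$ and the required inequality holds trivially as an equality. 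If $k\geq 1$, then by the remark at the end of Section \ref{sec:Bmatchings} we have $d_{\lambda}(\sL_T^q)=c_{\lambda,n}^{\sL_T^q}(q)\in\RR^+[q^2]$, so this quantity is a polynomial in $q^2$ with non-negative coefficients and is therefore non-negative at every real $q$. In this case the absolute values may simply be dropped on both sides, and the signed inequality from Corollary \ref{cor:immanant_q-Laplacian} delivers exactly what is needed.

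I foresee no real obstacle; the whole argument is a short bookkeeping exercise that chains the immanant-inversion identity of Lemma \ref{lem:lapl-expo} with the already-proved $\GTS_n$-monotonicity of $d_{\lambda}(\sL_T^q)$. The only subtle point is the passage from a signed inequality to an absolute-value inequality, and this is handled cleanly by the fact that for two-column $\lambda$ with $k\geq 1$ the relevant immanants are non-negative polynomials in $q^2$, while for $k=0$ both sides coincide identically with $|1-q^2|$.
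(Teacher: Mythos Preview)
Your argument is correct and matches the paper's intended route: the paper simply says the corollary follows by combining Lemma~\ref{lem:lapl-expo} with Theorem~\ref{thm:main} and omits the proof, and your use of Corollary~\ref{cor:immanant_q-Laplacian} (itself an immediate consequence of Theorem~\ref{thm:main}) together with the non-negativity remark at the end of Section~\ref{sec:Bmatchings} supplies exactly the missing details, including the careful handling of absolute values via the case split $k=0$ versus $k\ge 1$.
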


\subsection{$q,t$-exponential distance matrix}

We consider the bivariate exponential distance matrix in this subsection.
Orient the tree $T$ as done above.  Thus each 
directed arc $e$ of $E(T)$ has a unique reverse arc $e_{rev}$ and
we assign a variable weight $w(e) = q$ and $w(e_{rev}) = t$ or 
vice versa.  If the path
$P_{i,j}$ from vertex $i$ to vertex $j$ has the sequence of 
edges $P_{i,j} = (e_1,e_2, \ldots, e_p)$,  assign it weight
$w_{i,j} = \prod_{e_k \in P_{i,j}} w(e_k)$.  
Define $w_{i,i}=1$ for $i=1,2,\ldots,n$.
Define the bivariate exponential distance matrix 
$\ED_T^{q,t} = (w_{i,j})_{1 \leq i,j \leq n}$.
Clearly, when $q=t$, we have $\ED_T^{q,t} = \ED_T$.
Bapat and Sivasubramanian in \cite{bapat-siva-prod_dist_matrix} 
showed the following bivariate counterpart of Lemma 
\ref{lem:bapat_E_inverse}.

\begin{lemma}[Bapat, Sivasubramanian]
\label{lem:bapat_E_biv_inverse}
Let $T$ be a tree with $n$ vertices and
let $\sL_{T}^{q,t}$ and $\ED_T^{q,t}$ be its $q,t$-Laplacian 
and $q,t$ exponential distance matrix respectively. 
Then, $\det(\ED_T^{q,t}) = (1-qt)^{n-1}$ and 
if $qt \neq 1$, then 
$$(\ED_T^{q,t})^{-1}=\frac{1}{1-qt} \sL_{T}^{q,t}.$$
\end{lemma}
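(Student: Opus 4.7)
(Proposed proof of Lemma \ref{lem:bapat_E_biv_inverse}.)
The plan is to verify directly the matrix identity $\sL_T^{q,t}\,\ED_T^{q,t} = (1-qt) I$ by an entry-wise computation, and to obtain the determinant formula as a byproduct (or, independently, by a leaf-deletion induction).

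For the matrix identity, fix vertices $i,j \in [n]$ and compute
$(\sL_T^{q,t}\,\ED_T^{q,t})_{i,j} = D_{ii}\,w_{i,j} - \sum_{k \sim i} A_{i,k}\,w_{k,j},$
where the sum runs over neighbours $k$ of $i$ in $T$. The key observation is the following dichotomy for each neighbour $k$ of $i$. Either (a) $k$ lies on the unique path from $i$ to $j$, which happens for exactly one neighbour of $i$ when $i \neq j$ and for no neighbour when $i = j$; in this case the path from $k$ to $j$ is the $i$-to-$j$ path with its first edge removed, so $w_{i,j} = w(i,k)\,w_{k,j}$, giving $A_{i,k}\,w_{k,j} = w(i,k)\,w_{k,j} = w_{i,j}$. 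Or (b) $k$ does not lie on the path from $i$ to $j$; then the path from $k$ to $j$ prepends the edge $(k,i)$ to the path from $i$ to $j$, so $w_{k,j} = w(k,i)\,w_{i,j}$, giving $A_{i,k}\,w_{k,j} = w(i,k)\,w(k,i)\,w_{i,j} = qt\,w_{i,j}$, because the two opposite arcs on an edge carry weights $q$ and $t$ in some order.

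Summing over the $\deg(i)$ neighbours: for $i \neq j$, exactly one neighbour contributes a factor $1$ and the other $\deg(i)-1$ contribute $qt$, so $\sum_{k \sim i} A_{i,k}\,w_{k,j} = [1 + qt(\deg(i)-1)]\,w_{i,j} = D_{ii}\,w_{i,j}$, and the $(i,j)$-entry vanishes. For $i = j$, no neighbour lies on the (trivial) path, so $\sum_{k \sim i} A_{i,k}\,w_{k,i} = qt\,\deg(i)$, and the $(i,i)$-entry equals $[1+qt(\deg(i)-1)] - qt\,\deg(i) = 1-qt$. This proves $\sL_T^{q,t}\,\ED_T^{q,t} = (1-qt) I$.

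For the determinant, I would argue by induction on $n$. The base case $n=1$ is trivial. For $n \geq 2$, pick a leaf $v$ with unique neighbour $u$ and index $v$ last. For every $j \neq v$ the path from $v$ to $j$ starts with the arc $(v,u)$, so row $v$ of $\ED_T^{q,t}$ equals $w(v,u)$ times row $u$ outside column $v$, with $w_{v,v} = 1$ in column $v$ and $w_{u,v} = w(u,v)$ in row $u$, column $v$. Subtracting $w(v,u)$ times row $u$ from row $v$ leaves row $v$ equal to $(0,\ldots,0, 1 - w(v,u)w(u,v)) = (0,\ldots,0, 1-qt)$; expanding along this row reduces to $\ED_{T-v}^{q,t}$, and induction gives $\det(\ED_T^{q,t}) = (1-qt)^{n-1}$. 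When $qt \neq 1$, the matrix is invertible and the inverse formula $(\ED_T^{q,t})^{-1} = (1-qt)^{-1}\sL_T^{q,t}$ follows from the identity established above.

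The main obstacle is bookkeeping rather than conceptual: one must be careful that the two arcs of any edge always multiply to $qt$ regardless of how the orientation was assigned, so that the case (b) contribution is genuinely $qt\,w_{i,j}$ and not a $q^2$ or $t^2$ that depends on the chosen orientation. Once this symmetry is in hand, both the inverse identity and the determinant calculation are essentially mechanical.
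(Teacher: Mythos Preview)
Your proof is correct. The entry-wise verification of $\sL_T^{q,t}\,\ED_T^{q,t} = (1-qt)I$ is clean and complete, and the leaf-deletion induction for the determinant is valid; the key point you flag (that $w(i,k)w(k,i)=qt$ regardless of which arc got $q$ and which got $t$) is exactly what makes the off-diagonal cancellation go through.

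As for comparison: the paper does not actually prove this lemma at all. It is quoted as a known result of Bapat and Sivasubramanian \cite{bapat-siva-prod_dist_matrix} and used as a black box. So your argument is not an alternative to the paper's proof but rather a self-contained proof where the paper offers none. Your approach is essentially the natural one and is presumably close in spirit to what the cited reference does: the inverse relation is most easily checked by direct multiplication exploiting the tree's unique-path structure, and the determinant is most easily obtained by peeling off leaves. One could also derive the determinant from the matrix identity itself (taking determinants of both sides of $\sL_T^{q,t}\ED_T^{q,t}=(1-qt)I$ and using $\det\sL_T^{q,t}=1-qt$), but that would require knowing $\det\sL_T^{q,t}$ separately, so your independent induction is the cleaner route.
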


It is easy to see that all results about $\ED_T$ go through
for the bivariate $q,t$-exponential distance matrix $\ED_T^{q,t}$
when $q,t \in \RR$ with $qt \not= 1$ or when $q,t \in \CC$ with
$qt \not= 1$.  In particular, Corollary
\ref{cor:immanant_ed_t} goes through for the bivariate
exponential distance matrix.

\section*{Acknowledgement}
The first author acknowledges support from DST, New Delhi for 
providing a Senior Research Fellowship.
The second author acknowledges support from project grant
15IRCCFS003 given by IIT Bombay.

\bibliographystyle{acm}
\bibliography{main}
\end{document}